  \theoremstyle{plain}
  \newtheorem{theorem}{Theorem}[section]
  \newtheorem{lemma}[theorem]{Lemma}
  \newtheorem{prop}[theorem]{Proposition}
  \newtheorem{corollary}[theorem]{Corollary} 
  \theoremstyle{definition}
  \newtheorem{defn}[theorem]{Definition}
  \theoremstyle{remark}
  \newtheorem{remark}[theorem]{Remark}
  \DeclarePairedDelimiter{\abs}{\lvert}{\rvert}
  \DeclarePairedDelimiter{\sprod}{\langle}{\rangle}
  \DeclarePairedDelimiter{\sn}{[\kern-0.2em[}{]\kern-0.2em]}
  \DeclarePairedDelimiter{\FP}{\{}{\,|\kern-0.2em \}}
  \DeclarePairedDelimiter{\FK}{[}{\,|\kern-0.2em ]}
  \DeclarePairedDelimiter{\p}{\{}{\}}
  \newcommand{\I}{\mathrm {i}}
  \newcommand{\E}{\mathrm {e}}
  \newcommand{\N}{\mathbb{N}}
  \newcommand{\K}{\mathbb{K}}
  \newcommand{\R}{\mathbb{R}}
  \newcommand{\C}{\mathbb{C}}
  \newcommand{\Z}{\mathbb{Z}}
  \newcommand{\M}{\mathscr{M}}
  \newcommand{\HH}{\mathbf{H\!H}}
  \renewcommand{\d}{\mathrm{d}}
  \renewcommand{\H}{\text{H}}
  \renewcommand{\phi}{\varphi}
  \newcommand{\Alt}{\operatorname{Alt}}
  \newcommand{\GL}{\operatorname{GL}}
  \newcommand{\sign}{\operatorname{sign}}
  \newcommand{\rang}{\operatorname{rank}}
  \newcommand{\pr}{\operatorname{pr}}
  \newcommand{\im}{\operatorname{im}}
  \newcommand{\id}{\operatorname{id}}
  \newcommand{\CC}{\mathscr{C}}
  \newcommand{\A}{\mathscr{A}}
  \newcommand{\del}{\partial}
  \renewcommand{\O}{\mathcal{O}}
  \newcommand{\factor}[2]{\left.\raisebox{.2em}{$#1$}\middle/\raisebox{-.2em}{$#2$}\right.}
  \newcommand{\ph}{[[\lambda]]}
  \newcommand{\CCinf}{\CC^\infty}
  \newcommand{\X}{\mathfrak{X}}
  \newcommand{\HC}{\mathbf{HC}}
  \newcommand{\Hom}{\operatorname{Hom}}
  \newcommand{\DO}{\operatorname{DiffOp}}
  \newcommand{\DOver}{\operatorname{DiffOp_{ver}}}
  \newcommand{\DOhor}{\operatorname{DiffOp_{hor}}}
  \renewcommand{\id}{\operatorname{id}}
  \newcommand{\opp}{\mathrm{opp}}
  \newcommand{\diff}{\mathrm{diff}}
  \newcommand{\cont}{\mathrm{cont}}
  \newcommand{\Sym}{\mathcal{S}}
  \newcommand{\tilbullet}{\mathbin{\tilde \bullet}}
  \newcommand{\prbullet}{\mathbin{\bullet'}}
  \newcommand{\prol}{\operatorname{prol}}
  \newcommand{\otimeshat}{\mathbin{\hat\otimes}}
  \newcommand{\No}{\mathfrak{N}}
	\title{Bimodule Deformation of fibered manifolds and the HKR theorem}
	\date{June 1, 2018}
  \author{\textsc{Benedikt Hurle\thanks{benedikt.hurle@uha.fr}}\\[2mm] 
	\normalsize Universit\'e de Haute Alsace, Mulhouse (France) \\
	\normalsize IRIMAS, Département de Mathématiques\\ 
	\vspace{-5mm}}
\begin{document}

\maketitle

\begin{abstract}
 	\textbf{Abstract}	We first  want to consider the formal deformation of a fibered manifold  $P \rightarrow M$ as a (bi-)module or subalgebra, where $M$ has a given differential star product. The module case has already been dealt with in \citep{art,weisphd}. Consequently  we want to find obstructions for the existence of a bimodule or subalgebra, which turns out to be the curvature of the fiber bundle. Since the order by order construction of this structures amounts to solving equations in the Hochschild cohomology $\HH^\bullet(\CCinf(M),\DO(P))$, we proceed to computing this cohomology and also the very similar cohomology $\HH^\bullet(\CCinf(M),\CCinf(P))$ for the case of a smooth map $P \xrightarrow{\pr} M$ such that $\pr(P)$ is a closed submanifold of $M$.
						
\end{abstract}

\tableofcontents

\section*{Acknowledgment}
This work  is mostly part of my master thesis, which I did at Würzburg University.
I want to thank Stefan Waldmann, my advisor then, very much for  very many  helpful discussions and  ideas he gave me. He also suggested this topic to me.

I also want to thank Martin Bordemann for his comments and discussions with him.

\section{Introduction}

The aim of deformation quantization is to get from a classical physical system  described by a Poisson or symplectic manifold $M$ to a quantum theory, which has  as a classical limit this given system. For this one introduces a star product on the the formal power series of smooth functions $\CCinf(M)\ph$ on $M$. A star product is an associative but normally noncommutative product. From this one obtains the classical Poisson bracket by taking the limit $\hbar \rightarrow 0$ of the star commutator $\frac{\I}{\hbar} [\cdot,\cdot]_\star$.  This method has been introduced in \citep{bayen}.

Another idea, which is more recent, is to deform classical field theories by replacing the commutative algebra of functions on the spacetime manifold by a noncommutative one. The idea here is to deform the commutator of the coordinate functions, which is classical $[x_i,x_j] =0$, to something non-zero. There are many different approaches to this coming form theoretical physics, which lead to noncommutative field theories, see \citep{aschieri,wess,fredenhagen, douglas,schupp}.  Most of these approaches  only consider the case of $\R^4$ with a Weyl-Moyal product, however a more general approach is also needed. On the other hand there are quite concrete solutions to the corresponding noncommutative Einstein equations \citep{schenkel}.
This leads to what is called noncommutative geometry, which also has been studied from a more mathematical point of view, see e.g. \citep{connes}.

If one wants to deform a field theory in this way, one also needs to deform bundles over the spacetime manifold, especially principal bundles and  vector bundles, because this is where the fields, the connection or  the curvature (which is the field strength tensor) live. 
There are several approaches of doing this. One is by Connes, which uses  a so called spectral triple, see \citep{connes}. It was shown by Hawkins in \citep{hawkins} that this approach only works in some  situations.

Here we want to consider what happens in the context of deformation quantization. For this we consider the quite general situation of a fibered manifold, which can be specialized the principal bundles and other  cases.  
The weakest way is to deform those into a module, which has be done in \citep{weisphd,art}, and always works. But for many applications this seems not enough. For example to write  the Leibniz rule $\d(fa) = (\d f) a + f \d a$ with $f$ in some bundle over $M$ and $a \in \CCinf(M)$ in this form one would already need a bimodule. For the case of vector bundles this was also considered e.g. in \citep{waldmanne}. One can also use  a Drinfeld twist, see e.g. \citep{aschieri}, to do noncommutative geometry. But also here obstructions exist \citep{weber}.  Given a Drinfeld twist one can also define a star product, so to some extend what we do is more general. 
Also in the context of noncommutative geometry often Hopf-Galois extension are considered as a generalization of principal bundles, e.g. \cite{brezinski,MR2175995}, but  here one cannot deal with symplectic bases in general.

So the aim of this paper is to investigate under which conditions such bimodule structures for a fibered manifold $P \to M$ exist. It turns out that especially for the symplectic case there are strong obstructions and it is only possibly to get such a bimodule in very special cases, e.g.  if the bundle is trivial or there exists a flat connection on $P$. To be precise one gets the structure of a Poisson module on $\CCinf(P)$ over the Poisson algebra $\CCinf(M)$.  This can be used te define a morphism of differential operators $\DO(M) \to \DO(P)$, which respects the fiber projection.  This can be seen as a generalization  of a flat lift of the vector fields on $M$.

Since the order by order construction of these module and bimodule structures is equivalent to solving  equations in the Hochschild cohomology $\HH^\bullet(\CCinf(M),\DO(P))$, the second aim of this paper is to compute some of these  cohomologies, namely $\HH^\bullet(\CCinf(M),\allowbreak \CCinf(N))$ and $\HH^\bullet(\CCinf(M),\allowbreak \DO(N))$ for a sufficiently nice map $\pr:N\rightarrow M$. To be precise here we consider the differential or continuous Hochschild cohomology and not the purely algebraic one. This gives us - among other things - a generalization of the well known Hochschild-Kostant-Rosenberg theorem.
In fact we have 

\begin{theorem} 
	Let $N \xrightarrow{\pr} M$ be such that $ \pr(N)$ is a closed submanifold of $M$ then
	\begin{equation*}
		\HH^\bullet_\diff(\CCinf(M),\DO(N)) \cong \factor{\X^\bullet(M)|_{\pr(N)}}{\sprod{\X(\pr(N))}} \otimes_{\CCinf(M)} \DOver(N)
	\end{equation*}
	as $\CCinf(M)$-bimodule, where $\X^\bullet(M)$ denotes the set of vector fields on $M$ and $\sprod{x}$ denotes the ideal generated by $x$.
\end{theorem}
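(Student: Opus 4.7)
The plan is to produce an explicit Hochschild--Kostant--Rosenberg-type comparison map $\Phi$ from the right-hand side into the Hochschild cochain complex and then verify that it is a quasi-isomorphism by reduction to a local model together with the analogous statement for $\CCinf(N)$-coefficients, which is presumably established just before this theorem in the paper.

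First, define $\Phi$ by sending a decomposable element $X_1 \wedge \cdots \wedge X_k \otimes D$, with $X_i \in \X(M)|_{\pr(N)}$ and $D \in \DOver(N)$, to the antisymmetric cochain
\begin{equation*}
(f_1,\dots,f_k) \longmapsto \tfrac{1}{k!}\sum_{\sigma \in S_k} \sign(\sigma)\, \pr^*\!\bigl(X_{\sigma(1)} f_1\bigr)\cdots \pr^*\!\bigl(X_{\sigma(k)} f_k\bigr)\cdot D
\end{equation*}
in $C^k_\diff(\CCinf(M),\DO(N))$. Three verifications are needed: (i) this is a Hochschild cocycle, by the classical HKR computation together with the fact that $D$ commutes with multiplication by elements of $\pr^*\CCinf(M)$; (ii) the assignment is right-$\DOver(N)$-linear, so it factors through the tensor product over $\CCinf(M)$ (again using $[D,\pr^*\CCinf(M)]=0$); and (iii) it descends to the quotient by $\sprod{\X(\pr(N))}$. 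For (iii), if one of the $X_i$ is tangent to $\pr(N)$, lift it along the (local) submersion $\pr\colon N \to \pr(N)$ to some $\tilde X_i \in \X(N)$ satisfying $\pr^*(X_i f) = \tilde X_i(\pr^* f)$, and exhibit the cocycle as the Hochschild coboundary of the corresponding $(k-1)$-cochain with $D$ replaced by $\tilde X_i \cdot D \in \DO(N)$; the key ingredients are the commutators $[\tilde X_i, \pr^* f] = \pr^*(X_i f)$ and $[D, \pr^* f] = 0$, which together cancel the non-coboundary terms of the naive HKR calculation against the extra corrections coming from the bimodule structure.

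Second, show that $\Phi$ is a quasi-isomorphism by a local-to-global argument. Since differential Hochschild cohomology is local in $M$, a partition of unity reduces to a submanifold chart $U \subset M$ adapted to $\pr(N)$ together with a local trivialization $\pr^{-1}(U \cap \pr(N)) \cong (U \cap \pr(N)) \times F$. In this model $\DO(N)$ splits (non-canonically) as $\DO(U \cap \pr(N)) \otimeshat \DO(F)$, with the second factor realizing $\DOver(N)|_\loc$ and carrying the trivial $\CCinf(M)$-action; consequently the local Hochschild complex factors as $C^\bullet_\diff(\CCinf(U), \DO(U \cap \pr(N))) \otimeshat \DO(F)$. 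The problem therefore reduces to computing $\HH^\bullet_\diff(\CCinf(U), \DO(U \cap \pr(N)))$, which can be done either by invoking the companion theorem for $\CCinf$-coefficients together with a Koszul resolution in normal coordinates to $\pr(N)$ in $M$, or by a direct HKR-type argument on $U$. Either route gives $(\X^\bullet(U)|_{U \cap \pr(N)} / \sprod{\X(U \cap \pr(N))}) \otimes_{\CCinf(U)} \DO(U \cap \pr(N))$, matching the local form of $\Phi$, and the globalization follows by gluing.

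The main obstacle will be Step 2, specifically the transition from $\CCinf(N)$- to $\DO(N)$-coefficients. The local splitting $\DO(N) \cong \DOhor(N) \otimeshat \DOver(N)$ is not globally canonical (it depends on a choice of horizontal distribution), and the full bimodule structure is genuinely non-symmetric, so one must verify that the antisymmetrization in $\Phi$ is compatible with the right-$\DOver(N)$ action and that the local quasi-isomorphism glues correctly under change of trivialization. Both points hinge on the commutation $[D, \pr^* f] = 0$ for $D \in \DOver(N)$ and $f \in \CCinf(M)$, which is precisely what allows the vertical operators to pass through the Hochschild differential and lets the cohomology organize itself as a tensor product with $\DOver(N)$.
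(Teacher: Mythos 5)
Your overall strategy (localize via adapted coordinates, compute the local model, glue with a partition of unity, present the isomorphism as an explicit antisymmetrized HKR map) is close in spirit to the paper's, which proves a local version for the linear projection $\R^n\to\R^m$ via the Koszul resolution and then globalizes. Step~1 is sound: the verifications (i)--(iii), including the argument that a lift $\tilde X_i$ of a tangential $X_i$ exhibits the corresponding cochain as a coboundary, are correct and essentially amount to the remark at the end of the section about embedding the cohomology back into the complex.

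However, there is a concrete error in Step~2 which, as written, would deliver the \emph{wrong} cohomology group. You reduce to computing $\HH^\bullet_\diff(\CCinf(U),\DO(U\cap\pr(N)))$ and assert that this equals $\bigl(\X^\bullet(U)|_{U\cap\pr(N)}/\sprod{\X(U\cap\pr(N))}\bigr)\otimes_{\CCinf(U)}\DO(U\cap\pr(N))$. This is not right: with $N'=U\cap\pr(N)$ included as a closed submanifold of $U$ via the identity, the theorem you are proving specializes to $\DOver(N')=\CCinf(N')$ (there is no fiber), so the correct answer is $\No(U,U\cap\pr(N))\cong\bigl(\X^\bullet(U)|_{U\cap\pr(N)}/\sprod{\X(U\cap\pr(N))}\bigr)\otimes_{\CCinf(U)}\CCinf(U\cap\pr(N))$, with \emph{functions}, not differential operators, in the second factor. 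Feeding your claimed answer back through the Künneth factor $\otimeshat\,\DO(F)$ gives $\No\otimes\DO(N)$ rather than the asserted $\No\otimes\DOver(N)$. The disappearance of the horizontal operators on $\pr(N)$ is precisely the nontrivial content of the local computation and is not supplied by invoking the companion theorem for $\CCinf$-coefficients: that result applies only because $\CCinf(N)$ is a \emph{symmetric} bimodule, which is why no quotient appears there; $\DO(\pr(N))$ is not symmetric, and the quotient by $\sprod{\X(\pr(N))}$ together with the reduction to $\DOver$ is exactly the new phenomenon. The paper handles it by observing that, on $\Hom_{\A^e}(K_\bullet,\DO(\R^n))$, the Koszul differential along the base directions is a polynomial de Rham differential whose cohomology is the scalars, so only symbols in the fiber variables and only the form part transverse to $\pr(N)$ survive. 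You need to carry out (or cite) this computation; once you do, your reduction $\HH(\CCinf(U),\DO(\pr(N)))\otimeshat\DO(F)\cong\No(U,U\cap\pr(N))\otimeshat\DO(F)\cong\No\otimes_{\CCinf}\DOver$ closes the argument and agrees with the globalization you outline.
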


The paper is structured as follows: In the first section we we recall the basics of deformation quantization.
In the second section we first summarize the results from \citep{weisphd} and \citep{art} on module deformation and its relation to Hochschild cohomology. We proceed in finding the obstruction for a bimodule deformation, which in the symplectic case turns out to be the existence of a flat lift.  
In the last section, we compute the Hochschild cohomology $\HH^\bullet(\CCinf(M),\CCinf(N))$ and $\HH^\bullet(\CCinf(M),\DO(N))$ for a map $\pr$ between two manifolds $N$ and $M$, such that $\pr(N)$ is a closed submanifold of $M$, with the bimodule structure given by the pullback along $\pr$. This is done by using the Koszul complex of a convex set in $\R^n$, which we also define in this section. Computing this cohomology is useful,  because the vanishing of it in certain cases proves the fact that every fiber bundle can be deformed into a module and it also shows that, in the case of a bimodule, there are in general problems to be expected due to fact that the Hochschild cohomology is non trivial.

\section{Deformation of fibered manifolds}
\subsection{Star products}

We want to recall some basic definitions and facts about the deformation  quantization of smooth manifolds and star products.  

\begin{defn}[Star product]
	A (formal)  star product $\star$ on a manifold $M$ is a bilinear associative operation $\CCinf(M)\ph \times \CCinf(M)\ph \rightarrow \CCinf(M)\ph$ satisfying the following properties for all $f,g \in \CCinf(M)$:
	\begin{itemize}
		\item $ 1 \star f = f \star 1  = f$,
		\item $f \star g = f\cdot g + \O(\lambda)$,
		\item $f \star g = \sum_{k=0}^\infty C_k(f,g) \lambda^k$,
	\end{itemize}
	with bilinear operators $C_k$. We assume that all $C_k$ are bidifferential operators. 
	It is called natural if every $C_k$ is a differential operator of order $k$.
\end{defn}

We define the star commutator for $a,b \in \CCinf(M)\ph$ by 
$[a,b]_\star = a \star b - b\star a$.
As usual the star commutator satisfies the Leibniz and Jacobi-identity and so gives a non-commutative Poisson algebra. Also the adjoined action is a derivation of $\CCinf(M)\ph$ for all $a \in \CCinf(M)\ph$.
	
It is well known that the first order term of a star product defines a Poisson bracket as follows 
\begin{equation}
	\p{f,g} =  \frac{\I}{2\lambda} [f,g] |_{\lambda=0} \text{ for } f,g \in \CCinf(M).
\end{equation}

\begin{defn}[Equivalence of star products \citep{bayen}] 
	Two star products $\star$, $\star'$ are called equivalent if there exists a formal power series of differential operators 
	$T= \id + \sum_{k=1}^\infty T_k \lambda^k$, with $T(1) =1$ such that 
	\begin{equation}
		T(f) \star T(g) = T(f \star' g)
	\end{equation} 
\end{defn}
The operator $T$ in the above definition is always invertible and indeed, given a star product $\star$,
$f \star' g := T^{-1}(T(f) \star T(g))$ always gives a new equivalent star product. We recall:

\begin{lemma}
	Two equivalent star products give rise to the same Poisson bracket.
\end{lemma}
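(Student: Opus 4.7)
The plan is to compare the two star products order by order in $\lambda$ using the equivalence relation $T(f\star' g)=T(f)\star T(g)$, and show that the difference in first-order coefficients is symmetric in $f,g$, so that the antisymmetrization (which is what enters the Poisson bracket) coincides.

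First, I would write $\star=\sum_{k\geq 0}C_k\lambda^k$ and $\star'=\sum_{k\geq 0}C'_k\lambda^k$ with $C_0=C'_0$ being pointwise multiplication, and expand the equivalence condition
\begin{equation*}
    T(f\star'g)=T(f)\star T(g), \qquad T=\id+\sum_{k\geq 1}T_k\lambda^k,
\end{equation*}
to order $\lambda^1$. Collecting coefficients of $\lambda$ gives
\begin{equation*}
    T_1(fg)+C'_1(f,g)=T_1(f)\,g+f\,T_1(g)+C_1(f,g),
\end{equation*}
hence
\begin{equation*}
    C'_1(f,g)-C_1(f,g)=T_1(f)\,g+f\,T_1(g)-T_1(fg).
\end{equation*}

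Next, I would observe that the right-hand side is (up to sign) the Hochschild coboundary of $T_1$ with respect to the commutative pointwise multiplication on $\CCinf(M)$, and as such is manifestly \emph{symmetric} in $f$ and $g$. Consequently, the antisymmetric parts of $C_1$ and $C'_1$ agree:
\begin{equation*}
    C'_1(f,g)-C'_1(g,f)=C_1(f,g)-C_1(g,f).
\end{equation*}
Multiplying by $\tfrac{\I}{2}$ and using the definition $\{f,g\}=\tfrac{\I}{2\lambda}[f,g]_\star|_{\lambda=0}=\tfrac{\I}{2}\bigl(C_1(f,g)-C_1(g,f)\bigr)$, we conclude $\{f,g\}_\star=\{f,g\}_{\star'}$.

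There is no real obstacle here; the argument is a one-line Hochschild cocycle computation. The only subtlety is keeping track of the fact that $T$ is a formal power series starting with the identity (so only the first-order coefficients contribute at order $\lambda$), and noting that commutativity of the undeformed product is what forces the coboundary term to be symmetric, which is exactly why the antisymmetric part---i.e.\ the Poisson bracket---is an equivalence invariant.
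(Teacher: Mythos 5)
Your proof is correct and is the standard first-order-in-$\lambda$ argument: the difference $C_1'-C_1$ is the Hochschild coboundary $\delta T_1$ of the commutative pointwise product, hence symmetric, so the antisymmetrizations (and thus the induced Poisson brackets) coincide. The paper states this lemma as a recalled fact without proof, so there is no proof of its own to compare against, but your argument is exactly the one a reader would be expected to supply.
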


\subsection{Module deformations}

We want to find criteria, for which star products on a manifold $M$ and fibered manifolds $P$ over $M$ it is possible or not  to get a deformation of $\CCinf(P)$. We consider three different possibilities namely deformation as a module, as a bimodule and as a subalgebra. Here each version is stronger than the previous. For the module case essentially everything is known and works well \citep{art}. For the other cases this is not true. Here we give some obstructions, why things cannot always work, but also some examples where it works well.

For the convenience of the reader we recall some definitions and facts about module deformations from \citep{art,weisphd}, for proofs see there.

\begin{defn}
	A (left) module deformation of fibered manifold $P \xrightarrow{\pr} M$, where $M$ carries a star product $\star$, is a $(\CCinf(M)\ph,\star)$-left module  structure $\bullet$ on $\CCinf(P)\ph$, such that
	\begin{equation}
		a \bullet f = \pr^*a f + \sum_{k=1}^\infty \lambda^k L_k(a,f) = \sum_{k=0}^\infty \lambda^k L_k(a,f),
	\end{equation}
	where the $L_k \in \DO^\bullet(\CCinf(M),\CCinf(P);\CCinf(P))$ are bidifferential operators.
	A module deformation is called fiber preserving if $a \bullet \pr^* b = \pr^* (a \star b)$. It is called natural if all  $L_k$ are differential operators of order up to $k$ on $M$ and $P$. 
\end{defn}

The local form of a $L_k$ is given by
\begin{equation}
	L_k(a,f) = \sum_{I,J} \pr^*(\del_I a) L^{I,J}_k (\del_J f),
\end{equation}
where $I,J$ are multiindices and $L^{I,J}_k \in \CCinf(P)$ are coefficient functions.

Being fiber preserving is equivalent to $a \bullet 1 = \pr^* a$ for all $ a \in \CCinf(M)$, since then $a \bullet \pr^* b = a \bullet (b \bullet 1) = (a \star b) \bullet 1 = \pr^* (a \star b)$.

Similarly one can define a right module deformation. In this case we write $f \bullet a = \pr^*a f + \sum_{k=1}^\infty \lambda^k R_k(f,a)$.

It is also possible to define a module deformation  for an arbitrary map $\pr : P \rightarrow M$ in a similar way.

\begin{defn}
	Two module deformations $\bullet$ and $\tilde \bullet$ are called equivalent if there exits a formal series $T = \id +\sum_{k=1}^\infty T_k \lambda^k$ of differential operators on $P$ such that 
	\begin{equation}
		T(a \bullet f) = a \mathbin{\tilde \bullet} T(f)
	\end{equation}
	for all $a \in \CCinf(M)$ and $f \in \CCinf(P)$.
\end{defn}
  
Since $T$ as above is always invertible, given a module deformation $\bullet$, one can define an equivalent module by $a \tilbullet f= T^{-1}(a \bullet T(f))$. If the module is fiber preserving and $T$ satisfies  $T(\pr^*a) =0$ for all $a \in \CCinf(M)$ the new module $\tilde \bullet$ will also be fiber preserving.

The bidifferential operators $L_k$ can also be considered as elements of $\DO(\CCinf(M),\allowbreak \DO(P))$ by considering the operators $a \mapsto L_k(a,\cdot)$. So it is possible to find the obstruction to an order by order construction of a module structure in the differential Hochschild cohomology $\HH^2_\diff(\CCinf(M),\DO(P))$. This goes back to \citep{gerstenhaber1}.

\begin{lemma}[{\citep[Propostion 2.4.3]{weisphd}}] \label{th:mho}
	Assume that $L^{(r)} = \sum_{k=0}^r \lambda^k L_k$ is a $(\CCinf(M),\star)$-left module structure up to order $\lambda^k$, with $a \star b = \sum_{k=0}^\infty \lambda^k C_k(a,b)$, then $L^{(r+1)} = L^{(r)} + \lambda^{r+1} L_{r+1}$ is a module structure up to order $k+1$ if 
	\begin{equation}
		\delta L_{r+1} = R_r,
	\end{equation}
	where $\delta$ is the Hochschild differential of $\HC^\bullet(\CCinf(M),\DO(P))$ and $R_r$ is given by 
	\begin{equation}
		R_r(a,b) = \sum_{k=0}^r L_k(C_{r+1-k}(a,b),\cdot) - \sum_{k=1}^r L_k (b,L_{r+1-k}(a,\cdot)).
	\end{equation}
	Also $\delta R_r =0$, whence the obstruction for an order by order construction of a module structure is $[R_r] \in \HH^2_\diff(\CCinf(M),\DO(P))$.
\end{lemma}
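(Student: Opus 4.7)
The plan is to expand the associativity condition $(a\star b)\bullet f = a\bullet(b\bullet f)$ in powers of $\lambda$ and isolate the coefficient of $\lambda^{r+1}$. Since $L_0(a,f)=\pr^*a\cdot f$ is multiplication by the pullback, $\DO(P)$ carries the $\CCinf(M)$-bimodule structure $(a\cdot D\cdot b)(f) = \pr^*a\cdot D(\pr^*b\cdot f)$, and each $L_k$ can be viewed as a $1$-cochain $L_k\colon\CCinf(M)\to\DO(P)$, $a\mapsto L_k(a,\cdot)$. With this choice of bimodule the Hochschild differential reads
\begin{equation*}
(\delta L_{r+1})(a,b)(f) = \pr^*a\cdot L_{r+1}(b,f) - L_{r+1}(ab,f) + L_{r+1}(a,\pr^*b\cdot f).
\end{equation*}

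First I would extract from the order-$(r+1)$ equation
\begin{equation*}
\sum_{k+l=r+1} L_k(C_l(a,b),f) = \sum_{k+l=r+1} L_k(a,L_l(b,f))
\end{equation*}
those summands which contain $L_{r+1}$. Using $C_0(a,b)=ab$ and $L_0(a,f)=\pr^*a\cdot f$ they assemble precisely into $-(\delta L_{r+1})(a,b)(f)$, while all other summands involve only $L_0,\ldots,L_r$ together with $C_1,\ldots,C_{r+1}$ and combine into $R_r(a,b)(f)$. This directly yields the equation $\delta L_{r+1} = R_r$.

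Next I would verify $\delta R_r = 0$, so that $[R_r]$ is a well-defined class in $\HH^2_\diff(\CCinf(M),\DO(P))$. For this I would apply $\delta$ to the definition of $R_r$ and rewrite the resulting three-variable expression using two inputs: the associativity of $\star$ modulo $\lambda^{r+2}$, which yields $\sum_{i+j=s}C_i(C_j(a,b),c) = \sum_{i+j=s}C_i(a,C_j(b,c))$ for every $s\le r+1$, and the hypothesis that $L^{(r)}$ is a module modulo $\lambda^{r+1}$, which yields the analogous Gerstenhaber-type identities for the $L_k$ with $k\le r$. A careful re-indexing then shows that every summand of $(\delta R_r)(a,b,c)$ cancels in pairs.

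The main obstacle will be the bookkeeping in this last step: both $\delta$ and the partial-associativity identities produce many terms with delicate index shifts, and the asymmetry between the two slots of $L_k$ (the first acted on by pullback, the second operator-theoretic) requires care with signs and variable positions. Conceptually the vanishing holds because $R_r$ is essentially the associator defect at order $\lambda^{r+1}$ of the truncated structure $L^{(r)}$, so $\delta R_r$ is a combination of associator defects at order $\lambda^{r+2}$, which reduce to zero via the lower-order identities. Once $\delta R_r=0$ is established, it follows at once that $L^{(r)}$ extends to order $r+1$ if and only if $[R_r]\in\HH^2_\diff(\CCinf(M),\DO(P))$ vanishes, with the freedom in the extension parametrised by the $1$-cocycles at that order.
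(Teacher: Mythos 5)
Your overall strategy---expanding the module axiom $(a\star b)\bullet f = a\bullet(b\bullet f)$ at order $\lambda^{r+1}$, isolating the three terms containing $L_{r+1}$ into a Hochschild coboundary, and packaging the rest into $R_r$---is the standard Gerstenhaber obstruction argument, which is exactly what the cited reference uses, and this first half of your proposal is correct and complete. Note that the paper itself does not reproduce the proof but only cites \citep[Prop.~2.4.3]{weisphd}, so there is nothing to compare against verbatim; still, your derivation of $\delta L_{r+1} = R_r$ (with $R_r$ the sum of all lower-order contributions) is the right one. Incidentally, the second sum in your derivation naturally comes out as $L_k(a, L_{r+1-k}(b,\cdot))$ rather than the printed $L_k(b, L_{r+1-k}(a,\cdot))$; that looks like a transposition typo in the paper's statement and you should not force your computation to match it.

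The real content of the lemma, though, is $\delta R_r = 0$, and here your proposal stops at a sketch. ``A careful re-indexing then shows that every summand cancels in pairs'' is a description of what a proof would do, not a proof, and you yourself flag the bookkeeping as the ``main obstacle'' without resolving it. This step is not mere formality: it is what makes $[R_r]\in\HH^2_\diff(\CCinf(M),\DO(P))$ a well-defined class and hence an obstruction. To close the gap you must write out $(\delta R_r)(a,b,c)$ explicitly and show it vanishes using only (i) associativity of $\star$ up to order $r+1$, i.e.\ $\sum_{i+j=s}C_i(C_j(a,b),c)=\sum_{i+j=s}C_i(a,C_j(b,c))$ for $s\le r+1$, and (ii) the module identity for $L^{(r)}$ up to order $r$; crucially you may \emph{not} use $\delta L_{r+1}=R_r$ itself, since $R_r$ need not be exact. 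The asymmetry you point out between the two slots of $L_k$---the first argument sits in $\CCinf(M)$ and is hit by the left action, the second in $\CCinf(P)$ and is hit by the right action $\pr^*b\cdot(\,\cdot\,)$---is precisely where the delicate reindexing lives, and it is where such arguments typically go wrong if not written out. Your conceptual gloss (that $R_r$ is the order-$(r+1)$ associator defect, so $\delta R_r$ reduces to lower-order associators) is the right intuition and would guide that computation, but as submitted the proof of the cocycle condition is missing.
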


Similarly to the above lemma also the obstruction for the  construction of an equivalence order by order lies in a certain Hochschild cohomology.

\begin{lemma}[{\citep[Lemma 2.2]{art}}] \label{th:eho}
	Assume that $T^{(r)}  = \id + \lambda T_1 + \cdots + \lambda^r T_r$ is an equivalence between two left module structures $\bullet$ and $\tilbullet$ with differential operators $T_k$. Then the condition for $T^{(r+1)} = T^{(r)} + \lambda^{r+1} T_{r+1}$ to be an equivalence up to order $r+1$ is given by 
	\begin{equation}
		\delta T_{r+1} = E_r
	\end{equation}
	where $E_r(a)(f) = \sum_{s=0}^r  (L_{r+1-s}(a, T_s(f)) - T_s(L_{r+1-s}(a,f)) $
	Moreover $\delta E_r =0$ so the obstruction for an order by order construction lies in $\HH^1_\diff(\CCinf(M),\DO(P))$ for any order.
\end{lemma}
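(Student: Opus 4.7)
The plan is to expand the equivalence condition $T(a \bullet f) = a \mathbin{\tilde\bullet} T(f)$ order-by-order in $\lambda$ and to read off the obstruction from the coefficient at $\lambda^{r+1}$. Writing $a \bullet f = \sum_k \lambda^k L_k(a,f)$ and $a \tilbullet f = \sum_k \lambda^k \tilde L_k(a,f)$ with $L_0(a,f) = \tilde L_0(a,f) = \pr^*(a) f$, this coefficient reads
\begin{equation*}
\sum_{s+k = r+1} T_s\bigl(L_k(a,f)\bigr) \;=\; \sum_{s+k = r+1} \tilde L_k\bigl(a, T_s(f)\bigr).
\end{equation*}
I would first isolate the two $T_{r+1}$-terms, namely $T_{r+1}(L_0(a,f)) = T_{r+1}(\pr^*(a) f)$ on the left and $\tilde L_0(a, T_{r+1}(f)) = \pr^*(a) T_{r+1}(f)$ on the right. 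Using $T_0 = \id$ to simplify the remaining summands, the equation rearranges into
\begin{equation*}
\pr^*(a) T_{r+1}(f) - T_{r+1}\bigl(\pr^*(a) f\bigr) \;=\; E_r(a)(f).
\end{equation*}
The left-hand side is exactly $(\delta T_{r+1})(a)(f)$ when $T_{r+1}$ is viewed as a $0$-cochain in $\HC^0_\diff(\CCinf(M),\DO(P))$ and $\DO(P)$ is equipped with its natural $\CCinf(M)$-bimodule structure (pre- resp.\ post-composition with multiplication by $\pr^* a$). This proves the first claim.

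For the cocycle identity $\delta E_r = 0$ I would proceed by direct computation of
\begin{equation*}
(\delta E_r)(a,b)(f) \;=\; \pr^*(a) E_r(b)(f) - E_r(ab)(f) + E_r(a)\bigl(\pr^*(b) f\bigr).
\end{equation*}
Substituting the definition of $E_r$ produces a sum over $s$ and $k = r+1-s$ of summands built from $\tilde L_k(\cdot, T_s(\cdot))$ and $T_s(L_k(\cdot,\cdot))$. I would then apply the order-by-order associativity relations for $\bullet$ and $\tilbullet$ provided by Lemma~\ref{th:mho} (viewed at each order $j \leq r$), together with the equivalence relations for $T^{(r)}$ at orders $0, \ldots, r$ (the identity $\delta T_j = E_{j-1}$ proved in the first half, used inductively). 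A term-by-term cancellation is then supposed to leave zero.

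The main obstacle is the combinatorial bookkeeping in this last step: three indices appear simultaneously, and one must invoke two different families of lower-order identities (associativity of each module structure and equivalence of $T^{(r)}$) in concert. A useful conceptual sanity check comes from the deformation-theoretic principle $\delta^2 = 0$: if any differential operator $T_{r+1}$ satisfying $\delta T_{r+1} = E_r$ were already known to exist, then $\delta E_r = 0$ would be automatic; the honest proof above simply makes this constructive, extracting the cocycle property from the fact that all lower-order constraints have already been solved by the inductive hypothesis.
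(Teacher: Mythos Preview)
The paper does not supply its own proof of this lemma; it is quoted from the cited reference and is followed only by the remark that ``the proofs are completely algebraic''. So there is nothing in the paper to compare your argument against, and your expansion of $T(a\bullet f)=a\tilbullet T(f)$ at order $\lambda^{r+1}$ is the standard and correct derivation of the condition $\delta T_{r+1}=E_r$.

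One remark on the first half: after your rearrangement the right-hand side reads $\sum_{s=0}^{r}\bigl(T_s(L_{r+1-s}(a,f))-\tilde L_{r+1-s}(a,T_s(f))\bigr)$, which differs in sign and in a missing tilde from the $E_r$ printed in the statement. With the bimodule convention $(a\cdot D\cdot b)(f)=\pr^*a\,D(\pr^*b\,f)$ and the differential $(\delta T)(a)=a\cdot T-T\cdot a$ used elsewhere in the paper, your formula is the correct one; the discrepancy is a typo in the paper, not an error on your side.

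For $\delta E_r=0$ your brute-force plan would work, but the bookkeeping you flag as the main obstacle can be avoided entirely. Put $\Phi(a,f)=T^{(r)}(a\bullet f)-a\tilbullet T^{(r)}(f)$, so that $\Phi=\lambda^{r+1}E_r+\O(\lambda^{r+2})$ by hypothesis. Because $\bullet$ and $\tilbullet$ are honest module structures one has the exact identity
\[
\Phi(a\star b,f)=a\tilbullet\Phi(b,f)+\Phi(a,b\bullet f),
\]
obtained simply by adding and subtracting $a\tilbullet T^{(r)}(b\bullet f)$. Reading this at order $\lambda^{r+1}$ yields $E_r(ab)(f)=\pr^*a\,E_r(b)(f)+E_r(a)(\pr^*b\,f)$, which is exactly $(\delta E_r)(a,b)=0$. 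This is the rigorous version of the ``deformation-theoretic principle'' you invoke only as a sanity check, and it removes the triple-index combinatorics altogether.
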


In fact the proofs are completely algebraic so the hold for any algebra and module.
Concerning the existence and equivalence, is was shown in \citep[Theorem 1.5]{art} that:
\begin{theorem}
	Given a fibered manifold $P \xrightarrow{\pr} M$ and a star product on $M$ there exists always a (fiber preserving) module deformation, which is unique up to equivalence.
\end{theorem}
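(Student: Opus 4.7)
The strategy is induction on the order in $\lambda$ using Lemma \ref{th:mho} and Lemma \ref{th:eho}, with the local triviality of $P \to M$ together with a partition of unity on $M$ providing the geometric input that kills the cohomological obstructions. At order zero one sets $L_0(a,f) := (\pr^* a)\cdot f$, which is a fiber preserving module structure for the undeformed product. Assuming a fiber preserving partial deformation $L^{(r)} = \sum_{k=0}^{r} \lambda^k L_k$ has been constructed, Lemma \ref{th:mho} reduces the extension to order $r+1$ to finding a bidifferential operator $L_{r+1}$ with $\delta L_{r+1} = R_r$ and $L_{r+1}(a,1) = 0$, the latter ensuring that fiber preservation $a \bullet 1 = \pr^* a$ persists.

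The obstruction class $[R_r] \in \HH^2_\diff(\CCinf(M),\DO(P))$ need not vanish as a cohomology class for general $P$ (indeed, computing this cohomology is the object of the last section), but the specific cocycles $R_r$ arising here will be exact. To see this, cover $M$ by open sets $\{U_\alpha\}$ trivializing $P$, so that $P|_{U_\alpha}\cong U_\alpha \times F_\alpha$, and choose a subordinate partition of unity $\{\chi_\alpha\}\subset\CCinf(M)$. On each trivializing patch the canonical fiber preserving deformation
\begin{equation*}
a \bullet_\alpha (b\otimes\phi) := (a \star|_{U_\alpha} b)\otimes\phi
\end{equation*}
is defined to all orders, so by induction $R_r|_{U_\alpha}$ is the coboundary of a local bidifferential operator $L_{r+1}^{(\alpha)}$ satisfying the normalization. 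I would then build a global primitive by combining the $L_{r+1}^{(\alpha)}$ using multiplication by $\pr^*\chi_\alpha$. The reason this works is that the Hochschild differential on $\HC^\bullet(\CCinf(M),\DO(P))$ acts through left and right multiplications by pullbacks $\pr^* a$, $\pr^* b$, which commute with multiplication by $\pr^* \chi_\alpha$; the averaged sum is therefore again a primitive of $R_r$, and locality of differential operators keeps the result bidifferential.

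The uniqueness part proceeds analogously using Lemma \ref{th:eho}. Given two fiber preserving deformations $\bullet, \tilbullet$ agreeing up to order $r$, the construction of an equivalence $T_{r+1}$ reduces to solving $\delta T_{r+1} = E_r$ with $E_r \in \HC^1_\diff(\CCinf(M),\DO(P))$. On each $U_\alpha$, both deformations are by induction equivalent to the canonical $\bullet_\alpha$, hence to each other via a local differential operator $T_{r+1}^{(\alpha)}$, and the same partition-of-unity averaging delivers a global $T_{r+1}$. The base case $T_0 = \id$ is automatic.

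The main technical obstacle is checking that the partition-of-unity averaging interacts correctly with the Hochschild differential and preserves both the differential operator nature and the fiber preservation normalization. The decisive feature is that $\CCinf(M)$ is commutative and acts on $\DO(P)$ through the single homomorphism $\pr^*$, so that multiplication by $\pr^*\chi_\alpha$ commutes with every operation appearing in $\delta$; this renders the differential Hochschild complex a fine-sheaf-like object over $M$ for the purposes of this argument, and is exactly what makes obstructions coming from locally solvable problems globally solvable even when the ambient cohomology is nontrivial.
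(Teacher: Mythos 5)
Your overall scaffolding — order-by-order extension via Lemma~\ref{th:mho} and Lemma~\ref{th:eho}, reduction to a local problem on trivializing charts, and globalization by a partition of unity with $\pr^*\chi_\alpha$ commuting past all the operations in $\delta$ — is sensible, and the patching step itself is correct: since the bimodule actions on $\DO(P)$ go through $\pr^*$ and $\CCinf(M)$ is commutative, $\delta(\chi_\alpha\cdot\phi)=\chi_\alpha\cdot\delta\phi$ and the sum of local primitives over a locally finite partition is a global primitive.

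The gap is in the local step. You argue that because the canonical product deformation $a\bullet_\alpha(b\otimes\phi)=(a\star|_{U_\alpha}b)\otimes\phi$ exists to all orders on each trivializing patch, the restricted obstruction cocycle $R_r|_{U_\alpha}$ (built from the \emph{global} partial structure $L^{(r)}$) must be a coboundary. That does not follow. The existence of one module deformation through all orders does not show that an arbitrary order-$r$ module structure $L^{(r)}|_{U_\alpha}$ can be extended, nor that its obstruction class is zero; to draw that conclusion you would need to know that $L^{(r)}|_{U_\alpha}$ is equivalent (as a truncated module) to the truncation of $\bullet_\alpha$, which is exactly the uniqueness statement and, via Lemma~\ref{th:eho}, needs $\HH^1_\diff(\CCinf(U_\alpha),\DO(U_\alpha\times F))=0$. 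Your proof of uniqueness in turn points back to the same local existence claim, so the argument is circular unless the local cohomology vanishing is established by some other means. In fact the local input you need is precisely the Koszul-complex computation of Theorem~\ref{th:hkrld}: for a trivial bundle the differential Hochschild cohomology is concentrated in degree $0$, so $\HH^1$ and $\HH^2$ vanish locally, and then both the local exactness of $R_r$ and the local existence of $T_{r+1}$ come for free.

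Relatedly, your parenthetical remark that $[R_r]$ ``need not vanish as a cohomology class for general $P$'' is not accurate in the setting of this theorem. For a fibered manifold $\pr$ is a surjective submersion, so $\pr(P)=M$, the quotient in Theorem~\ref{th:hkrd} collapses to degree $0$, and $\HH^k_\diff(\CCinf(M),\DO(P))=0$ for all $k\ge 1$. The paper's proof is exactly this: apply Theorem~\ref{th:hkrd} to see $\HH^1=\HH^2=0$, then invoke Lemma~\ref{th:mho} for existence and Lemma~\ref{th:eho} for uniqueness. Your local-to-global strategy can be salvaged, but the local input must be the local cohomology vanishing from the Koszul resolution, not the mere existence of a local deformation.
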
  

This follows also from \cref{th:hkrd}  using the previous statements.

\subsection{Bimodule deformations}\label{ch:bim}

We now come to the study of bimodule deformations of a fibered manifold $P \xrightarrow{\pr} M$.

\begin{defn}[Bimodule deformation]
	A bimodule deformation of a surjective submersion is a left and right module deformation, $\bullet$ and $\prbullet$ resp., such that 
	\begin{equation}
		(a \bullet f) \prbullet b = a \bullet (f \prbullet b)
	\end{equation}
	for all $a,b\in \CCinf(M)$ and $f\in \CCinf(P)$, i.e. $\CCinf(P)\ph$ becomes a $(\CCinf(M)\ph,\star)$-bimodule.
	It is called fiber preserving if both module structures are fiber preserving. 
\end{defn}
 
We will call both module structures $\bullet$ in the following, because from the context it is clear which one we mean.

Also for the case of bimodules it is possible to define a notion of equivalence:

\begin{defn}
	Two bimodule deformations $\bullet$ and $\tilbullet$ are called equivalent if there exists a 
	formal power series $T= \id + \sum_{k=1}^\infty T_k \lambda^k$ of differential operators on $P$ such that 
	\begin{align}
		T(a \bullet f) & = a \tilbullet T(f) \\
		T(f \bullet a) & = T(f) \tilbullet a 
	\end{align}
	In this case $T$ is called the bimodule equivalence.
\end{defn}
 
Note that $T$ is a left and a right module  equivalence.
  
A simple calculation gives the following
\begin{lemma}
	Given a bimodule deformation $(\bullet,\prbullet)$ and $T$ as in the above definition $(\tilbullet,\tilbullet')$ given by 
	\begin{align}
		a \tilbullet f = T^{-1}(a \bullet T(f))  \\
		f \tilbullet' a = T^{-1}(T(f) \bullet a) 
	\end{align}
	is an equivalent bimodule deformation.
\end{lemma}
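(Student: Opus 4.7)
The statement is essentially a bookkeeping verification, so my plan is to check each axiom in the definition of an equivalent bimodule deformation by direct computation, relying on the fact that $T$ is a formal power series with leading term $\id$ and is therefore invertible.

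First I would check that $\tilbullet$ and $\tilbullet'$ are bilinear in the right way and that the zeroth order terms are correct: since $T = \id + \O(\lambda)$, we have $a \tilbullet f = T^{-1}(a \bullet T(f)) = \pr^* a \cdot f + \O(\lambda)$ and similarly for $\tilbullet'$, so these are formal deformations of the classical module structures. Moreover, composing $T^{-1}$ with a differential operator in $a$ and $f$ still yields bidifferential operators order by order.

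Next, I would verify the left and right module axioms. For the left one, associativity with $\star$ follows from
\begin{equation*}
(a \star b) \tilbullet f = T^{-1}((a \star b) \bullet T(f)) = T^{-1}(a \bullet (b \bullet T(f))) = T^{-1}(a \bullet T(b \tilbullet f)) = a \tilbullet (b \tilbullet f),
\end{equation*}
using associativity of the original left module structure $\bullet$ and inserting $T \circ T^{-1} = \id$. The right module axiom for $\tilbullet'$ is analogous. The crucial point is the bimodule compatibility
\begin{equation*}
(a \tilbullet f) \tilbullet' b = T^{-1}\bigl(T(a \tilbullet f) \bullet b\bigr) = T^{-1}\bigl((a \bullet T(f)) \bullet b\bigr) = T^{-1}\bigl(a \bullet (T(f) \bullet b)\bigr) = a \tilbullet (f \tilbullet' b),
\end{equation*}
where the middle equality invokes the bimodule compatibility of $(\bullet,\prbullet)$ and the outer equalities unfold and refold the definition of $\tilbullet, \tilbullet'$ together with $T \circ T^{-1} = \id$.

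Finally, the fact that $T$ realizes the equivalence between $(\bullet,\prbullet)$ and $(\tilbullet, \tilbullet')$ is immediate from the definitions, since applying $T$ to $a \tilbullet f = T^{-1}(a \bullet T(f))$ yields $T(a \tilbullet f) = a \bullet T(f)$, and likewise for the right action. There is no genuine obstacle here; the only point requiring mild care is to interpret every equation in the formal series sense so that $T^{-1}$ exists and the composition of bidifferential operators with $T^{\pm 1}$ remains bidifferential at each order. The fiber preserving property, if desired, is inherited provided $T(\pr^*a) = \pr^*a$, exactly as in the left-module case discussed after the previous definition.
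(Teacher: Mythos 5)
The paper gives no written proof of this lemma (it simply states ``A simple calculation gives the following''), and your direct verification is precisely the intended calculation: checking the left- and right-module axioms and the bimodule compatibility by conjugating with $T$ and using invertibility. The only pedantic remark is that, with the paper's convention for equivalence (which asks for $T(a\bullet f)=a\tilbullet T(f)$), it is $T^{-1}$ rather than $T$ that realizes the equivalence from $(\bullet,\prbullet)$ to $(\tilbullet,\tilbullet')$; since $T^{-1}$ is again of the form $\id+\O(\lambda)$ and the relation is symmetric, this is immaterial.
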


In the definition of a bimodule deformation one can also consider the case, where the star product that acts from the left is different from the one that acts from the right. The following proposition shows that in nice situations this is not the case 

\begin{prop}
	Given a bimodule $(\bullet,\prbullet)$ over $\star$ and $\star'$, the two Poisson brackets are the same.
	If the bimodule is fiber preserving we even have $\star =  \star'$.
\end{prop}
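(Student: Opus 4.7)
The plan is to exploit the three associativity identities---left, right, and bimodule---at lowest order in $\lambda$ and then antisymmetrise in the two $\CCinf(M)$-arguments. Write $a\bullet f=\pr^*a\cdot f+\lambda L_1(a,f)+O(\lambda^2)$, $f\prbullet b=\pr^*b\cdot f+\lambda R_1(f,b)+O(\lambda^2)$, $a\star b=ab+\lambda C_1(a,b)+O(\lambda^2)$, and $a\star' b=ab+\lambda C'_1(a,b)+O(\lambda^2)$. Since $\pr$ is a surjective submersion, $\pr^*$ is injective; this will be the device that transports identities from $\CCinf(P)$ back to $\CCinf(M)$ at the end.

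I would treat the fiber preserving case first, as it illustrates the mechanism most cleanly. Under that hypothesis $a\bullet 1=\pr^*a$ and $1\prbullet a=\pr^*a$ hold exactly, with no $\lambda$-corrections, so $a\bullet\pr^*b$ can be evaluated in two ways: as $a\bullet(b\bullet 1)=(a\star b)\bullet 1=\pr^*(a\star b)$ on the one hand, and as $a\bullet(1\prbullet b)=(a\bullet 1)\prbullet b=(1\prbullet a)\prbullet b=1\prbullet(a\star' b)=\pr^*(a\star' b)$ on the other. Injectivity of $\pr^*$ then yields $\star=\star'$.

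For the general Poisson-bracket statement, the plan is to expand the left associativity $(a\star b)\bullet f=a\bullet(b\bullet f)$ and the right associativity $f\prbullet(a\star' b)=(f\prbullet a)\prbullet b$ at order $\lambda$ to solve respectively for $L_1(a,\pr^*b\cdot f)$ and $R_1(\pr^*a\cdot f,b)$ in terms of $C_1$, $C'_1$, $L_1$, $R_1$ evaluated on simpler arguments, and then to substitute these into the order-$\lambda$ form of the bimodule compatibility $(a\bullet f)\prbullet b=a\bullet(f\prbullet b)$. The resulting identity couples $C_1-C'_1$ to $L_1$ and $R_1$ terms, but the key observation is that every surviving $L_1,R_1$ contribution depends on $a,b$ only through the symmetric combinations $ab$, $\pr^*a\cdot L_1(b,f)+\pr^*b\cdot L_1(a,f)$, and $\pr^*a\cdot R_1(f,b)+\pr^*b\cdot R_1(f,a)$. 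Subtracting the same equation with $a$ and $b$ swapped therefore annihilates all of them and leaves $\pr^*\bigl[(C_1(a,b)-C_1(b,a))-(C'_1(a,b)-C'_1(b,a))\bigr]\cdot f=0$; specialising to $f=1$ and applying injectivity of $\pr^*$ identifies the antisymmetric parts of $C_1$ and $C'_1$, which by the formula $\{\,\cdot\,,\,\cdot\,\}=\frac{\I}{2\lambda}[\,\cdot\,,\,\cdot\,]|_{\lambda=0}$ is exactly the equality of Poisson brackets. The main obstacle is pure bookkeeping---keeping track of which order-$\lambda$ terms come from which of the four structures---and no cohomological or global input is required.
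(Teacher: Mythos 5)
Your argument is correct and takes a genuinely different route from the paper's. The paper first normalizes via module equivalences: it invokes the theorem that left and right modules are unique up to equivalence and always admit fiber-preserving representatives, uses a right-module equivalence to make $\prbullet$ fiber preserving (so $R_1(\pr^*a,b)=\pr^*C'_1(a,b)$), and then uses the \emph{existence} of a left equivalence $T=\id+\lambda T_1+\cdots$ to express $L_1(a,\pr^*b)=\pr^*C_1(a,b)+aT_1(\pr^*b)-T_1(\pr^*ab)$, substituting these into the $f=1$ bimodule compatibility and antisymmetrizing so the $T_1$-terms cancel. Your proof avoids the equivalence machinery altogether: you express $R_1(\pr^*af,b)$ from right associativity and $L_1(a,\pr^*bf)$ from left associativity, plug both into the bimodule identity, and observe that the surviving $L_1,R_1$-terms — namely $\pr^*b\,L_1(a,f)+\pr^*a\,L_1(b,f)$, $\pr^*a\,R_1(f,b)+\pr^*b\,R_1(f,a)$, $L_1(ab,f)$, and $R_1(f,ab)$ — are all symmetric in $(a,b)$ and hence vanish after antisymmetrization, leaving $\pr^*\bigl[(C_1(a,b)-C_1(b,a))-(C'_1(a,b)-C'_1(b,a))\bigr]f=0$. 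This is more self-contained (no appeal to the existence-and-uniqueness theorem for module deformations, which in the paper is itself proved via Hochschild cohomology) and arguably clearer, since the symmetry that makes everything cancel is visible directly. Your treatment of the fiber-preserving case matches the paper's. One small point worth stating explicitly in a write-up is that injectivity of $\pr^*$ is used at the end, which requires $\pr$ to be surjective — this is guaranteed by the surjective-submersion hypothesis in the paper's definition of a bimodule deformation.
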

\begin{proof}
	Since all left and right modules are equivalent and there always exists a fiber preserving one, we can assume that $\prbullet$ is fiber preserving, i.e. $R_1(\pr^*a,b) = \pr^*  C'_1(a,b)$, because we can use this right module equivalence as a bimodule equivalence.
	We can also find a left module equivalence $T = \id + T_1 \lambda + \O(\lambda^2)$, which would make the left module fiber preserving. This means there exist a $T_1$ such that the following equation holds:
	$$L_1(a,\pr^* b) =\pr^* C_1(a,b) + a T_1(\pr^* b) - T_1(\pr^* ab).$$
				  
	From the bimodule condition $ a \bullet (f \bullet b) = ( a \bullet f) \bullet b$  in first order we get 
	$$\pr^* b L_1(a,f) + R_1(\pr^* a f,b) - \pr^* a R_1(f,b) - L_1(a,\pr^* b f) =0$$
	Inserting $L_1$ and $R_1$ as above and setting $f=1$ gives:
	\begin{align*}
		\pr^* b C'_1(a,1) + \pr^*(ba) T_1(1) - b T_1(\pr^*a) -\pr^* C'_1(a,b) &    \\
		+ a T_1(\pr^* b ) -T_1(\pr^*ab) + \pr^* C_1(a,b)                      & =0 
	\end{align*}
	Exchanging $a$ and $b$  then subtracting the two equations gives 
	\begin{equation*}
		C_1(a,b) - C'_1(a,b) - ( C_1(b,a) - C'_1(b,a)) =0,
	\end{equation*}
	as we wanted.
	\\
	The second statement follows from  $a \bullet (1 \prbullet b) = a \bullet \pr^* b = \pr^* (a \star b)$ and
	$ (a \bullet 1) \prbullet b = \pr^*a \prbullet b =\pr^*( a \star' b)$. 
\end{proof}

In the last section we showed that an order by order construction of a module is equivalent to solving equations in a certain Hochschild cohomology. The same can be done for a bimodule deformation.
To see this, one uses the well known fact that an $\A$-bimodule is equivalent to an $\A^e = \A \otimes \A^\opp$-module. With this one gets that the right Hochschild cohomology to consider is $\HH^\bullet(\A^e,\DO(P))$.

We now want to define a semi-classical limit of an bimodule deformation, which in some sense generalizes the fact that the semiclassical limit of a star product is a Poisson bracket.

\begin{defn}
	Given a surjective submersion $P\rightarrow M$ with a bimodule structure
	 $(\bullet ,\prbullet)$, with $a \bullet f = \sum_{k=0}^\infty L_k(a,f) \lambda^k$ and $f \prbullet a = \sum_{k=0}^\infty R_k(f,a)$,
		we can define the semi-Poisson bracket (sP-bracket)
		 $\FP{\cdot,\cdot} : \CCinf(M) \times \CCinf(P) \rightarrow \CCinf(P)$
		  by	
	\begin{equation}
		\FP{a,f} := \frac{\I}{2}(L_1(a,f) - R_1(f,a)).
	\end{equation}
\end{defn}
The factor $\frac{\I}{2}$ assures compatibility with the Poisson bracket.

\begin{remark}
	One can make the same definition if $\A$ is an arbitrary commutative algebra and $\M$ is a symmetric $\A$-bimodule. Also the following proposition remains true in this context.
\end{remark}

\begin{prop}\label{fpprop}
	The sP-bracket satisfies
	\begin{enumerate}[i)]
		\item $\FP{ab,f} = \pr^* a\FP{b,f} + \pr^* b \FP{a,f}$
		\item $\FP{a, \pr^* b f} = \pr^* \p{a,b} f  + \pr^* b \FP{a,f}$
		\item $\FP{a, \FP{b,f}} - \FP{b, \FP{a,f}} -\FP{ \p{a,b},f} =0$,
	\end{enumerate}
	for all $a,b \in \CCinf(M)$ and $f \in \CCinf(P)$.
	So especially the sP-bracket is a derivation in the first argument.\\
	If the bimodule is fiber-preserving, we also have $\FP{a,\pr^* b} = \pr^* \p{a,b}$.
\end{prop}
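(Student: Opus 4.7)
All four claims follow by expanding the associativity axioms of the bimodule in powers of $\lambda$ and collecting coefficients. At order $\lambda$, left module associativity $(a \star b) \bullet f = a \bullet (b \bullet f)$ gives
\begin{equation*}
    \pr^* C_1(a,b)\, f + L_1(ab, f) = \pr^* a\, L_1(b, f) + L_1(a, \pr^* b\, f),
\end{equation*}
and analogously right module associativity and the bimodule compatibility produce two further order-$\lambda$ identities among $L_1, R_1, C_1$; I denote the three by $(L_1)$, $(R_1)$, $(B_1)$.

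For (i) I would combine $(L_1)$ and $(R_1)$ to express $L_1(ab,f) - R_1(f, ab)$ as a sum of $L_1$- and $R_1$-terms, then invoke $(B_1)$ in the form $L_1(a, \pr^*b\, f) - \pr^*b\, L_1(a,f) = R_1(\pr^*a\, f, b) - \pr^*a\, R_1(f, b)$ to massage the result into $\pr^*a\, \FP{b,f} + \pr^*b\, \FP{a,f}$. For (ii), I apply $(L_1)$ and $(R_1)$ (the latter with $a$ and $b$ swapped) to compute $L_1(a, \pr^*b\, f)$ and $R_1(\pr^*b\, f, a)$; their $\frac{\I}{2}$-weighted difference produces $\pr^*\p{a,b}\, f$ plus a correction equal to $\FP{ab,f} - \pr^*a\, \FP{b,f}$, which by (i) equals $\pr^*b\, \FP{a,f}$. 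The fiber-preserving addendum is immediate from $L_1(a, \pr^*b) = \pr^*C_1(a,b)$ and $R_1(\pr^*b, a) = \pr^*C_1(b,a)$.

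The Jacobi-type identity (iii) is the main obstacle and requires the order-$\lambda^2$ versions $(L_2)$, $(R_2)$, $(B_2)$. Expanding $\FP{a, \FP{b,f}} - \FP{b, \FP{a,f}} - \FP{\p{a,b},f}$ yields an alternating combination of terms $L_1(a, L_1(b,f))$, $R_1(R_1(f,b), a)$, mixed terms $L_1(a, R_1(f,b))$, $R_1(L_1(a,f), b)$, and four terms of the form $L_1(C_1(\cdot, \cdot), f)$ and $R_1(f, C_1(\cdot, \cdot))$. Antisymmetrizing $(L_2)$ in $a,b$ rewrites the antisymmetric $L_1 \circ L_1$ part in terms of $L_1 \circ C_1$ terms, a $C_2$-contribution and an $L_2$-remainder; antisymmetrizing $(R_2)$ does the analogous job for $R_1 \circ R_1$ and $R_1 \circ C_1$; and applying $(B_2)$ in both orderings $(a,b)$ and $(b,a)$ handles the mixed $L_1 \circ R_1$, $R_1 \circ L_1$ terms in terms of $L_2, R_2$ remainders. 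The decisive cancellation is that the two $C_2$-contributions appear with opposite signs, and each $L_2$- or $R_2$-remainder is produced exactly twice with opposite sign; everything cancels and (iii) reduces to $0=0$. The bookkeeping is the only subtlety: once the eight $L_2/R_2$ remainders are organized into their cancelling pairs, the identity becomes formal.
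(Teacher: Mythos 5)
For (i) and (ii) your plan is essentially the paper's, just organized more piecemeal: you extract the order-$\lambda$ coefficients of left, right, and bimodule associativity separately and recombine, whereas the paper simply writes down one combined identity such as $(a \star b) \bullet f - f \bullet (a \star b) = a \bullet (b \bullet f) - a \bullet ( f \bullet b)  + (a \bullet f) \bullet b - (f \bullet a) \bullet b$ and reads off its $\lambda$-coefficient. Same ingredients, same conclusion; that part is fine.

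For (iii) you take a genuinely different and substantially heavier route, and this is where the proposal is incomplete. You expand the Jacobi expression in $L_1$, $R_1$, $C_1$, then propose to use the order-$\lambda^2$ associativity identities $(L_2)$, $(R_2)$, $(B_2)$ as rewriting rules and to check that every occurrence of $L_2$, $R_2$, $C_2$ cancels. You never carry this out; you only assert that the eight second-order remainders pair off and the two $C_2$-contributions have opposite signs. Since the identity is true, some such cancellation scheme must work, but exhibiting it is the entire content of the step, and it is left as a promise. The paper avoids the whole issue with one observation: the bracket $\FK{a,f} = a \bullet f - f \bullet a$ has vanishing $\lambda^0$-coefficient, and the \emph{exact} operator identity $\FK{a,\FK{b,f}} = \FK{[a,b]_\star, f} + \FK{b, \FK{a,f}}$ holds by sheer associativity of $\star$, $\bullet$, $\prbullet$ (no order-by-order analysis needed). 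Extracting the $\lambda^2$-coefficient, every would-be $L_2$, $R_2$ or $C_2$ contribution is automatically multiplied by the vanishing $\lambda^0$-coefficient of $\FK{\cdot,\cdot}$ or of $[\cdot,\cdot]_\star$, so it never appears; the $\lambda^2$-coefficient is (iii) on the nose, with no cancellation left to verify. I would either finish the bookkeeping you started explicitly, or --- better --- replace it with this commutator observation.
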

\begin{proof}
	Similarly how one can get the properties of a Poisson bracket from the associativity of the  star product, one also gets these properties of the sP-bracket from the compatibility of left and right module and the star product.
	\begin{enumerate}[ {ad} i)]
		\item We consider the equation 
		      \begin{equation*}
		      	(a \star b) \bullet f - f \bullet (a \star b) = a \bullet (b \bullet f) - a \bullet ( f \bullet b)  + (a \bullet f) \bullet b - (f \bullet a) \bullet b
		      \end{equation*}
		      Evaluating in order $\lambda$ gives
		      \begin{align*}
		      	\pr^* C_1(a,b)  f & + L_1(ab,f) - R_1(f,ab) - \pr^* C_1(a,b) f                                    \\
		      	                  & = L_1(a, \pr^* b f) + \pr^* a L_1(b,f)) - \pr^* a R_1(f,b) - L_1(a,\pr^* b f) \\
		      	                  & + \pr^* b L_1(a,f)  + R_1(\pr^* a f,b) - \pr^* b R_1(f,a) - R_1(\pr^* a f ,b) 
		      \end{align*}
		      Some of the terms cancel and the remaining give the desired equation.
		\item $a \bullet (b \bullet f) -  (b \bullet f) \bullet a = b \bullet (a \bullet f - f \bullet a) + (a \star b - b\star a) \bullet f$ in first order gives the result.
		\item Using $ \FK{a, f}  =  a \bullet f - f \bullet a$ this equation is the second order term of 
		      \begin{equation}
		      	\FK{a, \FK{b,f}} = \FK{ [a,b]_\star , f} + \FK{b,\FK{af}}.
		      \end{equation}
		      This does not evolve the second order term of the module, since the zeroth order term of $\FK{\cdot,\cdot}$ is zero.
	\end{enumerate}
\end{proof}

A bracket which satisfies the properties given in the previous proposition is sometimes called a Poisson module. Note these are completely algebraic.
In the following we will call a bracket which satisfies these properties a semi-Poisson bracket.

\begin{prop}
	The sP-bracket of a bimodule deformation is invariant under bimodule equivalence transformations. So let $(\bullet,\prbullet)$ and $(\tilbullet,\tilbullet')$ be two equivalent bimodules and $\FP{\cdot,\cdot}$ and $\FP{\cdot,\cdot}'$  resp. be the corresponding sP-brackets then we have
	\begin{equation}
		\FP{a,f} = \FP{a,f}' \textrm{ for all } a\in \CCinf(M), f \in \CCinf(P).
	\end{equation}
\end{prop}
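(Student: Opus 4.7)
The plan is to expand the bimodule equivalence conditions to first order in $\lambda$ and observe that the contributions from $T_1$ cancel out in the antisymmetric combination that defines the sP-bracket. Writing $T = \id + \lambda T_1 + \O(\lambda^2)$, $a \bullet f = \pr^* a \cdot f + \lambda L_1(a,f) + \O(\lambda^2)$, and analogously for the tilded bimodule, I would first expand the left-module equivalence $T(a \bullet f) = a \tilbullet T(f)$ at order $\lambda$, obtaining
\begin{equation*}
T_1(\pr^* a \cdot f) + L_1(a,f) = \pr^* a \cdot T_1(f) + \tilde L_1(a,f),
\end{equation*}
which rewrites as $\tilde L_1(a,f) - L_1(a,f) = T_1(\pr^* a \cdot f) - \pr^* a \cdot T_1(f)$.

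Next I would do the same for the right-module condition $T(f \bullet a) = T(f) \tilbullet' a$ to get
\begin{equation*}
\tilde R_1(f,a) - R_1(f,a) = T_1(f \cdot \pr^* a) - T_1(f) \cdot \pr^* a.
\end{equation*}
Using that the pointwise product on $\CCinf(P)$ is commutative, the two right-hand sides coincide: $T_1(\pr^* a \cdot f) = T_1(f \cdot \pr^* a)$ and $\pr^* a \cdot T_1(f) = T_1(f) \cdot \pr^* a$. Subtracting the two order-$\lambda$ identities therefore yields $\tilde L_1(a,f) - \tilde R_1(f,a) = L_1(a,f) - R_1(f,a)$, and multiplying by $\I/2$ gives the desired equality $\FP{a,f}' = \FP{a,f}$.

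There is no real obstacle here; the statement is essentially a first-order consistency check. The only thing to watch is that the cancellation relies solely on the commutativity of the undeformed (zeroth-order) multiplication on $\CCinf(P)$ and does not require any further properties of $T_1$ beyond its being a linear (differential) operator, which is why the same argument will go through verbatim in the more general algebraic setting mentioned in the remark after the definition of the sP-bracket.
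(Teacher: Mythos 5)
Your proof is correct and follows essentially the same route as the paper: write out the first-order consequences of the two equivalence relations, observe that the $T_1$-contributions are identical by commutativity of the undeformed product, and subtract. (As a side note, the signs you obtained for $\tilde L_1 - L_1$ are the correct ones; the paper's displayed formulas carry the opposite sign, but since the same sign appears in both $\tilde L_1$ and $\tilde R_1$ the cancellation is unaffected.)
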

\begin{proof}
	Let $T  \in \DO(P) \ph$ be the bimodule equivalence, then one has
	\begin{equation*}
		\tilde L_1 (a,f) = L_1(a,f) + a T_1(f) -  T_1(af)
	\end{equation*}
	and
	\begin{equation*}
		\tilde R_1 (f,a) = L_1(f,a) + a T_1(f) -  T_1(af).
	\end{equation*}
	Subtracting these two one obtains
	\begin{equation*}
		\tilde L_1(a,f) - \tilde R_1(f,a)  = L_1(a,f) -R_1(f,a).
	\end{equation*}
\end{proof}

This means for example that two bimodule deformations with different sP-brackets cannot be equivalent.

\begin{defn}
	We will call a sP-bracket fiber preserving if $\FP{a,\pr^* b }= \pr^* \p{a,b}$, this is equivalent to $\FP{a,1} =0$, since $\FP{a,\pr^* b 1} = \pr^* \p{a,b} 1 + \pr^* b \FP{a,1}$. 
	We will call a sP-bracket natural if $\FP{a,fg}= \FP{a,f}g + \FP{a,g}f$. So a natural sP-bracket is also fiber-preserving.   
\end{defn}

\begin{prop}
	If a bimodule deformation is  fiber preserving  so is  the corresponding sP-bracket.
	If  it is fiber preserving and natural the corresponding sP-bracket is natural.
\end{prop}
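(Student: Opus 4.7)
The plan is to translate the two hypotheses directly into properties of the first-order coefficients $L_{1}$ and $R_{1}$, and then read off the claim from the definition $\FP{a,f}=\tfrac{\I}{2}(L_1(a,f)-R_1(f,a))$.

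First I would unpack fiber preservation. Because the left module is fiber preserving we have $a\bullet 1=\pr^{*}a$, and comparing coefficients in $\lambda$ yields $L_{k}(a,1)=0$ for every $k\ge 1$; the same argument applied to $1\prbullet a=\pr^{*}a$ gives $R_{k}(1,a)=0$. In particular $L_{1}(a,1)=0=R_{1}(1,a)$, so $\FP{a,1}=0$. By the equivalent characterization recorded in the definition above the proposition, this is exactly the statement that the sP-bracket is fiber preserving, which settles the first claim.

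For the second claim I would argue that $\FP{a,\cdot}$ is a derivation of $\CCinf(P)$ for each fixed $a\in\CCinf(M)$. By naturality, $L_{1}$ and $R_{1}$ are differential operators of order at most one in the $\CCinf(P)$-argument; together with Step~1 (vanishing on the constant function $1$) this forces $L_{1}(a,\cdot)$ and $R_{1}(\cdot,a)$ to be pure first-order operators without zeroth-order part, i.e.\ derivations $\CCinf(P)\to\CCinf(P)$. Since a linear combination of derivations is again a derivation, $\FP{a,\cdot}=\tfrac{\I}{2}(L_{1}(a,\cdot)-R_{1}(\cdot,a))$ satisfies the Leibniz rule $\FP{a,fg}=\FP{a,f}g+f\FP{a,g}$, which is the definition of a natural sP-bracket.

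There is no real obstacle here. The only nontrivial ingredient is the standard fact that a differential operator of order $\le 1$ which annihilates the constant function~$1$ is a derivation, which is immediate from the local form of such an operator. Everything else is a direct comparison of coefficients of $\lambda$.
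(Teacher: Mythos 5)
The paper states this proposition without proof, so there is no author argument to compare against; your proof is correct and is what the paper presumably leaves implicit. Your first step correctly uses the characterization from the text that fiber preservation of the module is equivalent to $a\bullet 1=\pr^*a$ (and $1\prbullet a=\pr^*a$), forcing $L_k(a,1)=0=R_k(1,a)$ for $k\ge 1$, hence $\FP{a,1}=0$, which is exactly the definition of fiber preservation for the sP-bracket. Your second step correctly invokes the definition of a natural module deformation ($L_k$, $R_k$ of order at most $k$ in each slot), so $L_1(a,\cdot)$ and $R_1(\cdot,a)$ are order-$\le 1$ differential operators on $P$; combined with vanishing on $1$ this makes them derivations, and a linear combination of derivations is a derivation, giving the Leibniz rule for $\FP{a,\cdot}$. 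Both steps are sound and complete.
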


We recall the definition of a Hamiltonian vector field. Let $M$ be a Poisson manifold and $a \in \CCinf(M)$ then we define the Hamiltonian vector field $X_a \in \CCinf(M)$ by  $X_a(b) = \p{a,b}$ for $b \in \CCinf(M)$. Note that sometimes a different sign is chosen.

\begin{prop}\label{lift}
	Given a natural sP-bracket on $P \xrightarrow{\pr} M$, where the corresponding Poisson bracket is symplectic, we get a horizontal lift, which is given on Hamiltonian vector fields by $X_a^h(f) = \FP{a,f}$ for $a\in \CCinf(M)$ and $f \in \CCinf(P)$, and thereby a connection on $P$.
\end{prop}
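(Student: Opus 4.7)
The plan is first to check that $X_a^h$ defined by $X_a^h(f) := \FP{a,f}$ is a vector field on $P$ that is $\pr$-related to $X_a$, and then to establish that its value at a point $p \in P$ depends only on the tangent vector $X_a(\pr(p))$, not on the function $a$ itself. This last point is what lets the assignment descend to a well-defined pointwise linear map $H_p \colon T_{\pr(p)}M \to T_pP$, and hence a horizontal distribution.

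The first two steps are immediate from the hypotheses. Naturality of the sP-bracket gives the Leibniz rule $\FP{a, fg} = \FP{a,f}g + \FP{a,g}f$, so $\FP{a,\cdot}$ is a derivation of $\CCinf(P)$ and hence a vector field. That $X_a^h$ is $\pr$-related to $X_a$ follows from the fiber-preserving property (which a natural sP-bracket automatically enjoys): for any $b \in \CCinf(M)$ one computes $X_a^h(\pr^* b) = \FP{a, \pr^* b} = \pr^* \p{a,b}$, which is precisely $X_a(b) \circ \pr$.

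The main obstacle is well-definedness. Given $a,b \in \CCinf(M)$ with $X_a(\pr(p)) = X_b(\pr(p))$, I need $\FP{a-b,f}(p) = 0$ for every $f \in \CCinf(P)$. I would combine three ingredients: the vanishing $\FP{c,f} = 0$ on constants $c$ (which follows from $\FP{1\cdot 1,f} = 2\FP{1,f}$ via property (i) of \cref{fpprop}), the derivation property in the first slot, and Hadamard's lemma. Setting $c := a - b$, the symplectic assumption turns $X_c(\pr(p)) = 0$ into $(\d c)_{\pr(p)} = 0$, so one can expand $c = c_0 + \sum_i c_i h_i$ on a neighbourhood of $\pr(p)$ with $c_0$ constant and $c_i(\pr(p)) = h_i(\pr(p)) = 0$. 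Applying the first-slot Leibniz rule to each product $c_i h_i$ and evaluating at $p$, every resulting term carries a factor $\pr^*(c_i)(p)$ or $\pr^*(h_i)(p)$ and so vanishes, while the $c_0$-term is killed by the constants identity. The symplectic hypothesis is essential here, since in the degenerate Poisson case $X_c(\pr(p)) = 0$ does not force $(\d c)_{\pr(p)}$ to vanish and the Hadamard argument would break down.

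With well-definedness established I set $H_p \colon T_{\pr(p)}M \to T_pP$ by $H_p(X_a(\pr(p))) := (X_a^h)_p$. Since in the symplectic case every tangent vector in $T_{\pr(p)}M$ is of the form $X_a(\pr(p))$ for some $a$, the map $H_p$ is defined on the entire tangent space, and linearity of $\FP{\cdot,\cdot}$ in its first slot makes it linear. The relation $\pr_{*,p} \circ H_p = \id$ then forces $H_p$ to be injective and its image $H_pP$ to be complementary to the vertical subspace $V_pP = \ker \pr_{*,p}$. Smoothness of the distribution is a local question: in Darboux coordinates on $M$, the Hamiltonian vector fields of the coordinate functions form a local frame of $TM$, and their lifts are smooth vector fields on $P$ whose values span $HP$ pointwise, so $HP$ is a smooth horizontal subbundle and defines the Ehresmann connection claimed in the statement.
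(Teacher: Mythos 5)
Your argument is correct and follows the same route as the paper's: define the lift on Hamiltonian vector fields, deduce the vector-field property from naturality, confirm $\pr$-relatedness from fiber-preservation, and use the first-slot Leibniz rule together with the symplectic hypothesis for well-definedness. Your Hadamard-lemma expansion of $c=a-b$ simply spells out the step the paper compresses into ``the sP-bracket is a derivation in the first argument so it only depends on the differential [of $a$],'' and the explicit construction of $H_p$, its linearity, and the smoothness of the distribution are routine checks the paper leaves implicit in ``and thereby a connection on $P$.''
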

\begin{proof}
	Since $M$ is symplectic it is enough to specify the horizontal lift on Hamiltonian vector fields $X_a \in \X(M)$, since these span the tangent space at every point. For these we set $X_a^h(f) = \FP{a,f}$ for all $f \in \CCinf(P)$.
	This is well-defined because the sP-bracket is a derivation in the first argument so it only depends on the differential of $f$. Since the Poisson structure is symplectic this is uniquely determined by the vector field. 
	Because we assume $\FP{\cdot,\cdot}$ to be natural it is also a derivation in the second argument and so $X_a^h$ is really a vector field. 
	Finally, since $X_a^h(\pr^* b) =  \FP{a, \pr b} =\pr^* \p{a,b}  = \pr^* X_a(b)$, we get a horizontal lift.
\end{proof}

Now we come to a main result of this section:

\begin{theorem}\label{th:flat}
	Given a natural sP-bracket on $P \xrightarrow{\pr} M$, where the corresponding Poisson bracket is symplectic, the connection defined in \cref{lift} is flat.\\
	So given a fibered manifold $P$ over a symplectic manifold $M$ a bimodule deformation with natural sP-bracket can only exists if $P$ admits a flat connection.
\end{theorem}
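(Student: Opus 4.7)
The plan is to exploit the Jacobi-style identity (iii) of \cref{fpprop} and identify it with the vanishing of the curvature. Recall that the curvature of the connection on $P \to M$ associated to a horizontal lift $X \mapsto X^h$ is the tensor
\begin{equation*}
	R(X,Y) = [X^h,Y^h] - [X,Y]^h,
\end{equation*}
which takes values in the vertical vector fields. Since the curvature is $\CCinf(M)$-bilinear and antisymmetric in $X,Y$, and since on the symplectic manifold $M$ the Hamiltonian vector fields $X_a$ span $\T M$ at every point, it suffices to show $R(X_a,X_b)=0$ for all $a,b \in \CCinf(M)$.

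I would first observe the well-known identity $[X_a,X_b] = X_{\p{a,b}}$ on a symplectic manifold (with the sign convention used here for $X_a$). Applied to any $f \in \CCinf(P)$, one has
\begin{align*}
	[X_a^h, X_b^h](f) & = X_a^h(X_b^h(f)) - X_b^h(X_a^h(f))               \\
	                  & = \FP{a,\FP{b,f}} - \FP{b,\FP{a,f}}               \\
	                  & = \FP{\p{a,b},f} = X_{\p{a,b}}^h(f) = [X_a,X_b]^h(f),
\end{align*}
where the crucial third equality is precisely property (iii) of \cref{fpprop}. Thus $R(X_a,X_b)=0$.

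To conclude flatness I still need to upgrade this from Hamiltonian vector fields to arbitrary ones, which is where the tensoriality of $R$ comes in: since Hamiltonian vector fields locally span $\X(M)$ over $\CCinf(M)$ (a consequence of $M$ being symplectic, so that $a \mapsto X_a$ surjects onto $\T M$ pointwise), any $X,Y \in \X(M)$ can locally be written as $\CCinf(M)$-linear combinations of Hamiltonian ones, and $R$'s bilinearity then forces $R(X,Y)=0$ everywhere. The second statement of the theorem is then immediate: the existence of a natural sP-bracket produces via \cref{lift} a connection, and the above argument shows it is flat, so $P$ admits a flat connection.

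The main obstacle I expect is not the computation—it reduces cleanly to property (iii)—but rather a careful verification that the horizontal lift really is $\CCinf(M)$-linear and antisymmetric enough to let one pass from Hamiltonian vector fields to all vector fields; this is where naturality of the sP-bracket (which was used to ensure $X_a^h$ is actually a derivation on $P$ in \cref{lift}) is indispensable, since without it the bracket $[X_a^h,X_b^h]$ would not even make sense as a vector field.
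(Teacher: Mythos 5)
Your proof is correct and follows essentially the same route as the paper: you evaluate the curvature on Hamiltonian vector fields, reduce $R(X_a,X_b)=0$ to property (iii) of \cref{fpprop} together with the identity $[X_a,X_b]=X_{\p{a,b}}$, and invoke the fact that Hamiltonian vector fields span $TM$ pointwise on a symplectic manifold. Your additional remarks on tensoriality of $R$ and the role of naturality just make explicit what the paper's terser proof leaves implicit.
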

\begin{proof}
	Since the manifold is assumed to be symplectic it suffices to compute the curvature on Hamiltonian vector fields, for these we get with the Jacobi identity (\Cref{fpprop})
	\begin{align*}
		R(X_a,X_a)(f) & = [X_a^h ,X_b^h](f) - [X_a,X_b]^h(f)                    \\
		              & = X_a^h (X_b^h(f)) - X_b^h (X_a^h(f)) - X_{\p{a,b}} (f) \\ 
		              & =\FP{a,\FP{b,f}}-\FP{b,\FP{a,f}} - \FP{ \p{a,b} ,f} =0  
	\end{align*}
	for all $a,b \in \CCinf(M)$ and $f \in \CCinf(P)$.
\end{proof}

This can be generalized in some sense to the case where the sP-bracket is not natural. 

\begin{prop}\label{rm:lift}
	In the symplectic case  the sP-bracket can be used  to define a map $\DO(M) \to \DO(P)$.
\end{prop}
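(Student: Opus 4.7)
The plan is to extend the construction of \Cref{lift} by landing in $\DO(P)$ instead of $\X(P)$, which frees us from the naturality assumption. Since $L_1$ and $R_1$ are bidifferential by definition of a module deformation, the operator $D_a : f \mapsto \FP{a,f} = \frac{\I}{2}(L_1(a,f)-R_1(f,a))$ belongs to $\DO(P)$ for every $a \in \CCinf(M)$, even when the sP-bracket fails to be natural. These $D_a$ are the candidate images of the Hamiltonian vector fields $X_a$.

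The first step is to assemble them into a map $\Phi : \X(M) \to \DO(P)$ by setting $\Phi(X_a) := D_a$ on Hamiltonian vector fields and extending $\CCinf(M)$-linearly through $\pr^*$. Well-definedness requires that $\Phi$ descends from Hamiltonian to all vector fields: by \Cref{fpprop}~(i), for each $f \in \CCinf(P)$ the map $a \mapsto \FP{a,f}$ is a $\CCinf(M)$-derivation into the $\pr^*$-module $\CCinf(P)$, so by the universal property of K\"ahler differentials it factors through $d : \CCinf(M) \to \Omega^1(M)$ as a $\CCinf(M)$-linear map. The symplectic form gives an isomorphism $\X(M) \cong \Omega^1(M)$ sending Hamiltonian vector fields to exact forms, which transports this factorization to a well-defined $\CCinf(M)$-linear map $\Phi : \X(M) \to \DO(P)$ agreeing with $X_a \mapsto D_a$.

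The second step is to verify the compatibility conditions that promote $\Phi$ to an algebra homomorphism $\DO(M) \to \DO(P)$ via the universal property of $\DO(M)$ as the enveloping algebra of the Lie algebroid $TM$. Concretely these are $\Phi(fX) = \pr^*f \cdot \Phi(X)$ (built into the definition), $[\Phi(X), \pr^*f] = \pr^*(X(f))$ (which on Hamiltonian vector fields is \Cref{fpprop}~(ii) and then extends by $\CCinf(M)$-linearity), and the Lie bracket relation $[\Phi(X), \Phi(Y)] = \Phi([X,Y])$ (which on Hamiltonian generators is \Cref{fpprop}~(iii)). Together with the algebra map $\pr^* : \CCinf(M) \to \DO(P)$ on the zero-order part, these data specify a representation of the Lie algebroid $TM$ on $\DO(P)$, which extends uniquely.

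The main obstacle is the bracket compatibility on general vector fields: writing $X = \sum_i f_i X_{a_i}$ and $Y = \sum_j g_j X_{b_j}$ locally and expanding $[X,Y]$ by Leibniz produces cross terms $f_i \p{a_i, g_j} X_{b_j}$ and $g_j \p{b_j, f_i} X_{a_i}$ that must be matched against the corresponding cross terms in $[\pr^* f_i \cdot D_{a_i}, \pr^* g_j \cdot D_{b_j}]$, obtained via the commutation rule $[D_a, \pr^*g] = \pr^*\p{a,g}$ inherited from \Cref{fpprop}~(ii). This matching reduces, upon collecting terms, to \Cref{fpprop}~(ii)--(iii) on the Hamiltonian generators, but is a routine and somewhat tedious Leibniz computation which I would not grind out in full here.
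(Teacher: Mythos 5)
Your proof follows the same route as the paper's: identify $\DO(M)$ with the universal enveloping algebra of the Lie--Rinehart pair $(\CCinf(M),\X(M))$ (equivalently, of the Lie algebroid $TM$), assign $X_a \mapsto \FP{a,\cdot}$ on Hamiltonian vector fields, and verify the defining relations via \cref{fpprop}. You are more explicit than the paper about why this extends to a well-defined $\CCinf(M)$-linear map on all of $\X(M)$ (via the symplectic isomorphism $\X(M)\cong\Omega^1(M)$ and the derivation property), while the paper instead grinds out the product-compatibility check that you correctly defer as a routine Leibniz computation.
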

\begin{proof}
	Before giving the proof, we recall very briefly the constructing of the universal enveloping algebra of a Lie-Rinehart algebra as given in \citep{MR1058984}. 
 Let $(A,L)$ be a Lie-Rinehart algebra over $\K$. Then we define $A \odot U(L)= A \otimes U(L)$, where $U(L)$ denotes the universal enveloping algebra of $L$ considered as Lie algebra over $\K$, with the multiplication $ (a \otimes X)(b \otimes Y) = ab \otimes XY + a X(b) \otimes Y$ with $a,b \in A$ and $X,Y \in L$.
Then we denote by $I$ the ideal generated by $a \otimes X - 1 \otimes a X$ and $U(A,L) =\factor{A \odot L}{I}$ is the universal enveloping algebra of $(A,L)$. We recall that for a manifold $M$ we have $U(\CCinf(M),\X(M)) =\DO(M)$.  
 On Hamiltonian vector fields we can define a Lie algebra morphism   $\X(M) \to \DO_L(P)$, where the Lie bracket on $\DO(M)$ is given by the commutator, by  $X_a \mapsto \FP{a, \cdot}$. This can be extend to an algebra morphism $\phi:  U(\X(M)) \to \DO(M)$.  With this we define a map $\Phi:\CCinf(M) \odot U(\X(M)) \to \DO(P)$ by $a \otimes  D \mapsto \pr^* a D $. Since 
\begin{align*}
	\Phi(a \otimes X_c )\Phi( b \otimes X_d) (f)&= \pr^* a \FP{ c,\pr^* b \FP{d,f}} = \pr^*( a b)\FP{c,\FP{d,f}} + \pr^*(a \p{c,b}) \FP{d,f} \\
	&=	\Phi(ab \otimes X_c X_d + a  X_c(b)\otimes X_d  ) = \Phi((a\otimes X_c)(b \otimes X_d))
\end{align*}
for $a,b,c,d \in \CCinf(M)$ it is an algebra morphism. It is also clear that it vanishes on $I$ and we get an induced map from $\DO(M)= U(\CCinf(M),\X(M)) \to  \DO(P)$.
\end{proof}

In the non-symplectic case we get a horizontal lift over the symplectic leaves of the Poisson manifold. 
This condition is clearly not enough to get a bimodule deformation. Consider for example a symplectic star product $\star$ and formally replace $\lambda$ by $\lambda^2$ to get $\tilde \star$ then the Poisson tensor $\tilde \pi =0$,  so the condition on the symplectic leaves is empty, but we can only find a bimodule for $\tilde \star$ if there is one for $\star$.  
  
We do not get a horizontal lift in the  non-symplectic case due to two reasons:
\begin{itemize}
	\item When the Poisson tensor is degenerate, the Hamiltonian vector fields do not span the tangent space, so it is not possible to lift every vector.
	\item The horizontal lift would be ill defined, because it can happen that $X_a =X_b$ for $a,b \in \CCinf(M)$ with $\d a \neq \d b $.
\end{itemize}

The obstruction we find is only in first order in $\lambda$ and in the general Poisson case it is the existence of a sP-bracket. This is non trivial as we have seen. In the symplectic the existence of a sP-bracket is enough to get a deformation. 

\begin{theorem}
	Let $(M,\star)$ be a manifold with a symplectic star product  and $P \to M$ a fibered manifold. Given a sP-bracket on $P$, there exists a bimodule deformation.
\end{theorem}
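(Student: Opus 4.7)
The strategy is to invoke Proposition~\ref{rm:lift}, which in the symplectic setting produces from the given sP-bracket an algebra morphism $\Phi\colon \DO(M) \to \DO(P)$, and to use it to transport the star product onto $\CCinf(P)\ph$ as a bimodule action. The underlying observation is that a star product is fully encoded in its left (and right) regular representation by formal differential operators on $M$, so once we can push such operators to $P$ via an algebra morphism, the bimodule compatibility becomes simply the associativity of $\star$.

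Concretely, I would first extend $\Phi$ $\K\ph$-linearly to $\Phi\colon \DO(M)\ph \to \DO(P)\ph$, which remains an algebra morphism. For $a \in \CCinf(M)\ph$ set
\begin{equation*}
	m_a^L(b) := a \star b, \qquad m_a^R(b) := b \star a;
\end{equation*}
both are elements of $\DO(M)\ph$ because $\star$ is bidifferential. I would then define
\begin{equation*}
	a \bullet f := \Phi(m_a^L)(f), \qquad f \prbullet a := \Phi(m_a^R)(f).
\end{equation*}
Associativity of $\star$ immediately yields the three operator identities $m_a^L \circ m_b^L = m_{a\star b}^L$, $m_b^R \circ m_a^R = m_{a \star b}^R$, and $m_a^L \circ m_b^R = m_b^R \circ m_a^L$, which under the algebra morphism $\Phi$ translate into the left-module, right-module, and bimodule-compatibility axioms respectively. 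The zeroth order of both $m_a^L$ and $m_a^R$ is the multiplication operator $m_a$, and $\Phi(m_a) = \pr^* a$, giving the correct classical limit. To check that the associated sP-bracket coincides with the given one, I would compute
\begin{equation*}
	L_1(a,f) - R_1(f,a) = \Phi\bigl(C_1(a,\cdot) - C_1(\cdot,a)\bigr)(f) = -2\I\, \Phi(X_a)(f) = -2\I\, \FP{a,f},
\end{equation*}
using $C_1(a,\cdot) - C_1(\cdot,a) = -2\I\, X_a$ from the definition of the Poisson bracket attached to $\star$ and $\Phi(X_a) = \FP{a,\cdot}$ from the very construction of $\Phi$ in Proposition~\ref{rm:lift}; multiplying by $\tfrac{\I}{2}$ recovers $\FP{a,f}$ on the nose.

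The main technical point I anticipate is ensuring that $a \bullet f$ and $f \prbullet a$ are genuinely bidifferential in $(a,f)$, rather than only formal series of differential operators in $f$ with some abstract dependence on $a$. Working in local coordinates, writing $C_k(a,b) = \sum_{I,J} c_k^{IJ}\, \partial_I a\, \partial_J b$ shows that $C_k(a,\cdot) = \sum_{I,J} (c_k^{IJ}\partial_I a)\, \partial_J$, and tracing this through the universal enveloping algebra construction of $\Phi$ from Proposition~\ref{rm:lift} gives $\Phi(C_k(a,\cdot))(f) = \sum_{I,J} \pr^*(c_k^{IJ}\partial_I a)\, \phi(\partial_J)(f)$, which is manifestly bidifferential in $(a,f)$. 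The symplectic hypothesis enters only through Proposition~\ref{rm:lift}, but there it is essential: it is what permits the assignment $X_a \mapsto \FP{a,\cdot}$ to extend from Hamiltonian vector fields to a well-defined algebra morphism on all of $\DO(M)$, and hence what makes the whole transport construction possible.
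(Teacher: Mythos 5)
Your proposal is correct and takes essentially the same approach as the paper: both invoke Proposition~\ref{rm:lift} to obtain the algebra morphism $\DO(M)\to\DO(P)$ and then define the bimodule structure by lifting the left and right multiplication operators of $\star$ through it, with bimodule compatibility reducing to associativity of $\star$. You simply package the lift as $\Phi(m_a^{L})$ rather than term by term as $C_i(a,\cdot)^h$, and you additionally spell out the checks (bidifferential locality, classical limit, and that the resulting sP-bracket reproduces the given one) that the paper leaves implicit.
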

\begin{proof}
	The star product consists of differential operators $C_i$ on $M$, which can be lifted by using \cref{rm:lift}. So we set $L_i(a,f) := C_i(a,\cdot)^h(f)$ and $R_i(f,a) := C_i(\cdot,a)^h(f)$. One has to check that this in fact defines a bimodule. This follows since 
	$C_i(a,L_j(b,f)) = C_i(a, C_j(b, \cdot ))^h(f)$, which show that it is a left module and similar for the other conditions.
\end{proof}

One can also consider the case were only a Poisson structure on $M$ is given and not  a star product. Then the next question would be, if given a sP-bracket  it is possible to get a a star product on $M$ and bimodule structure on $\CCinf(P)$. Further it would interesting to classify them  up to equivalence, similarly to Kontsevich's formality theorem.

\subsection{Deformation of principal fiber bundles}

In this section we want to consider the deformation of principal fiber bundles. We denote the structure group by $G$. We then have an induced left action of $G$ on the functions on $P$, given by 
$(g \triangleright f)(p) = f( p \cdot g)$, where $\cdot$ denotes the principal right action.

\begin{defn}
	A module deformation of a principal fiber bundle with structure group $G$ is called a deformation of a principal fiber bundle if 
	\begin{equation}
		g \triangleright (a \bullet f) = a \bullet (g \triangleright f)
	\end{equation}
	for all $g \in G$ and similarly for a bimodule.
\end{defn}

\begin{prop}
	For a bimodule deformation of a principal bundle we have for the sP-bracket
	\begin{equation}
		g  \triangleright \FP{a,f} = \FP{ a, g \triangleright f}.
	\end{equation}
\end{prop}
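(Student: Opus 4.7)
The plan is to extract the identity order by order in $\lambda$ from the two equivariance conditions that by definition characterize a bimodule deformation of a principal bundle, namely $g \triangleright (a \bullet f) = a \bullet (g \triangleright f)$ together with its right-module analogue $g \triangleright (f \prbullet a) = (g \triangleright f) \prbullet a$.

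First, I would record that the principal right action $p \mapsto p \cdot g$ is a diffeomorphism preserving the fibres of $\pr$, so the operator $g \triangleright$ is a $\CCinf(P)$-algebra automorphism, and any pullback $\pr^* a$ is $G$-invariant. In particular $g \triangleright(\pr^* a \cdot f) = \pr^* a \cdot (g \triangleright f)$, which shows that the order $\lambda^0$ parts of the two equivariance conditions are automatic.

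Next, inserting the expansions $a \bullet f = \pr^* a \cdot f + \lambda L_1(a,f) + \O(\lambda^2)$ and $f \prbullet a = \pr^* a \cdot f + \lambda R_1(f,a) + \O(\lambda^2)$ into both equivariance conditions and comparing order $\lambda^1$ coefficients, I would read off
\begin{equation*}
	g \triangleright L_1(a,f) = L_1(a, g \triangleright f), \qquad g \triangleright R_1(f,a) = R_1(g \triangleright f, a).
\end{equation*}
Subtracting these identities, multiplying by $\frac{\I}{2}$, and using the $\K$-linearity of $g \triangleright$ together with the definition of the sP-bracket then yields the claim.

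There is essentially no obstacle here: the argument is purely bookkeeping. The only thing that needs to be checked carefully is the $G$-invariance of $\pr^* a$, which is specific to the principal bundle setting and would fail for an arbitrary fibered manifold with a fibre-preserving group action that does not cover the identity on $M$.
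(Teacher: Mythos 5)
Your argument is correct and follows essentially the same route as the paper: extract the order-$\lambda$ coefficients $g \triangleright L_1(a,f) = L_1(a, g \triangleright f)$ and $g \triangleright R_1(f,a) = R_1(g \triangleright f, a)$ from the equivariance conditions, subtract, and scale by $\frac{\I}{2}$. The extra remarks on the $\lambda^0$ level and on the $G$-invariance of $\pr^*a$ are accurate but not strictly needed for the computation.
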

\begin{proof}
	Since $g \triangleright (a \bullet f) = a \bullet (g \triangleright f)$ we get $g \triangleright L_1(a,f)= L_1(a,g \triangleright f)$
	and similarly for $R_1$, with this
	\begin{equation*}
		g  \triangleright \FP{a,f} = g  \triangleright (L_1(a,f) -R_1(f,a)) =
		L_1(a,g  \triangleright f) - R_1(f, g  \triangleright f) = \FP{a, g  \triangleright f}.
	\end{equation*}
\end{proof}

\begin{prop}
	In the case of a principal bundle bimodule deformation the connection of \cref{lift} is a principal connection.
\end{prop}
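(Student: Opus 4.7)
The plan is to unpack what it means for the connection from \cref{lift} to be principal and then reduce this to the previous proposition via the density of Hamiltonian vector fields in the symplectic case.

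First I would recall the characterization of a principal connection on $P\to M$ in terms of horizontal lifts: a connection is principal if and only if the horizontal lift is equivariant with respect to the principal right action, i.e.\ $(R_g)_* X^h = X^h$ for every $X\in\X(M)$ and every $g\in G$. Written as an identity on functions, this amounts to the commutation relation
\begin{equation*}
g\triangleright (X^h(f)) = X^h(g\triangleright f)
\end{equation*}
for all $f\in\CCinf(P)$ and $g\in G$, since this is the statement that $X^h$ is $(R_g)$-related to itself.

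Next I would reduce the verification of this equation to Hamiltonian vector fields. Since $M$ is symplectic (this hypothesis is inherited from \cref{lift}), the Hamiltonian vector fields $X_a$ with $a\in\CCinf(M)$ span the tangent space at every point, so by $\CCinf(M)$-linearity it suffices to check equivariance of $X_a^h$ for every such $a$. By definition of the horizontal lift we have $X_a^h(f)=\FP{a,f}$, so the condition to check becomes exactly
\begin{equation*}
g\triangleright \FP{a,f} = \FP{a, g\triangleright f}.
\end{equation*}

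Now this is precisely the content of the preceding proposition, so the proof is essentially a one-line invocation. The only genuine step is the first one: being careful that the condition of being a principal connection indeed translates into the equivariance of horizontal lifts, and that because the Hamiltonian vector fields span $\T M$ in the symplectic case the verification on them is enough. I do not expect any real obstacle here, since both ingredients (equivariance of the sP-bracket under $G$, and symplectic spanning) are already available.
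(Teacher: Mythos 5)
Your proof is correct and takes essentially the same approach as the paper: check $G$-equivariance of the horizontal lift on Hamiltonian vector fields and invoke the preceding proposition $g\triangleright\FP{a,f}=\FP{a,g\triangleright f}$. You merely spell out the reduction to Hamiltonian vector fields (via their pointwise spanning of $\T M$ in the symplectic case), which the paper leaves implicit.
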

\begin{proof}
	For a Hamiltonian vector field $X_a$ we have 
	\begin{align}
		g  \triangleright X^h_a(f) = g \triangleright  \FP{a,f} = X^h_a(g  \triangleright f), 
	\end{align}
	which shows that the horizontal lift and so the connection is compatible with the principal fiber bundle.
\end{proof}

If one has a deformation of a principal bundle one can also define a deformation of associated vector bundles, for the module case this is done in  \cite[Sec. 6]{art}. Here we proceed similarly for the bimodule and algebra case.
 
So let us consider an associated vector bundle $E=P[V,\rho]$ over a principal $G$ bundle $P$ with typical fiber a finite dimensional vector space $V$ and $\rho: G \rightarrow \GL(V)$ a representation. For details of the definition see \citep[Section 18.7]{michor}.
It is well known that $\CCinf(P,V)^G \cong \Gamma^\infty(E)$. We denote this isomorphism by $\hat \cdot$ and its inverse by $\check \cdot$. 
 
\begin{defn}
	Given an associated vector bundle $E=P[V,\rho]$ and a  bimodule deformation of $P$ as a principal bundle, we define a bimodule structure on $\Gamma^\infty(E)$ by 
	\begin{equation}
		a \bullet s = (a \bullet \hat s)^\vee 
	\end{equation}
	and similarly for the right module structure.
\end{defn}
\begin{proof}
	We need to show that this is well defined.  For this we have to show  that $a \bullet \hat s$ is again $G$ equivariant. We have for any $g \in G$
	\begin{equation}
		g \triangleright (a \bullet \hat s) = a \bullet ( g\triangleright \hat s) = a \bullet \rho(g) \hat s =
		\rho(g) a \bullet \hat s.
	\end{equation}
	What is what we wanted.
\end{proof}

\begin{remark}
	If $V$ is also an algebra and given a principal subalgebra deformation of $P$, one can also define an  $\CCinf(M)$ algebra   structure on $\Gamma^\infty(E)$.
\end{remark}
\begin{proof}
	For $f,g \in \CCinf(P,V)^G \cong \Gamma^\infty(E)$ and $\{e_i\}$ a basis of $V$ one defines 
	\begin{equation}\label{sa1}
		f \star g = (f^i e_i) \star g^j (e_j)  = (f^i \star g^j) (e_i \cdot e_j) 
	\end{equation}
	where $\cdot$ is the undeformed product of the algebra $V$.
	This is obviously independent of the choice of the basis. Using the fact that $g \triangleright (f \star_P h) =  (g \triangleright f) \star_P (g \triangleright h)$ for $g \in G$ and $f,h \in \CCinf(M)$, one gets that the product in \eqref{sa1} is again $G$-equivariant.
\end{proof}

An interesting example of this is the frame bundle of a manifold, because if we can deform this as an algebra we can also deform the associated vector bundles like the tangent or cotangent bundle and  higher tensor bundles like the exterior algebra. Deforming a single bundle of this as an algebra is straight forward using the above remark. More care has to be taken if the relations between these, e.g. that the tangent bundle is the dual of the cotangent bundle, should be preserved.
One also should note that if the Poisson structure on $M$ is symplectic, these deformations can only exist if the frame bundle is trivial, i.e. the manifold is parallelizable. But even in this case we do not see a straightforward way of deforming for example the de-Rham differential. But also in other approaches to deforming the exterior algebra this only works for specific cases, for example using a Drinfeld twist, see e.g. \citep{wess}.

\subsection{Equivalence of bimodules}\label{ch:nonequi}

In this section we want to show that given a trivial fiber bundle there are in general infinitely many nonequivalent bimodule deformations. For this we first describe a way of constructing a new bimodule structure out of a given one.
 
We denote $\mu_\star(a \otimes b) = a \star b$, $l_\star(a \otimes f) = a \bullet f$ and $r_\star(f \otimes a) = f \bullet a$.
We can define a new left module structure by $l'_\star = l_\star \E^{\lambda Q}$. The exponential is well defined since in any order in $\lambda$ there are only finitely many terms.
Here 
\begin{equation}
	Q= \sum_i E_i \otimes D_i
\end{equation}
where 
$E_i$  is a derivation of the star product, this means $E_i(a \star b) = E_i(a) \star b + a \star E_i(b)$ or 
\begin{align} 
	E_i \circ \mu_\star = \mu_\star (E_i \otimes \id + \id \otimes E_i) 
\end{align}
and $D_i$ is a homomorphism  of the bimodule, so $D(a \bullet f \bullet b) = a \bullet D(f) \bullet b$ or  
\begin{align}
	D \circ l_\star = l_\star (\id \otimes D) & \text { or } D(a \bullet f) = a \bullet D(f) \text{ and } \\
	D \circ r_\star = r_\star (D \otimes \id) & \text { or } D(f \bullet a) = D(f) \bullet a              
\end{align}
We also have to assume that all the $D_i$ commute among each other and also all the $E_i$, i.e. $D_i \circ D_j - D_j \circ D_i$ for all $i,j$ and the same for the $E_i$. 
We also use
\begin{equation}
	Q_{13} = E_i \otimes \id \otimes D_i,
\end{equation}
where a sum over $i$ is to be understood as in the following computations.
 
We show that $l'_\star$ is again a left module and together with $r_\star$ a bimodule.
For this we first compute 
\begin{align}
	\begin{split}\label{ec1}
	Q \circ (\id \otimes l_\star) & = (E_i \otimes D_i)(\id \otimes l_\star) = E_i \otimes D_i l_\star                            \\
	                              & = E_i \circ l_\star  (\id  \otimes D_i) = (\id \otimes l_\star )(E_i \otimes \id \otimes D_i) \\
																& = (\id \otimes l_\star ) Q_{13} 
	\end{split}                                                              
\end{align}
and also 
\begin{align}\label{ec2}
	\begin{split}
	Q \circ (\mu_\star \otimes \id) & = (E_i \otimes D_i) \circ (\mu_\star \otimes \id)                                                                      \\
																	& =( \mu_\star \circ  (E_i \otimes \id + \id \otimes E_i)) \otimes D_i = (\mu_\star \otimes \id) \circ (Q_{13} + Q_{23}) 
	\end{split}
\end{align}
and
\begin{align}\label{ec3}
	Q \circ (\id \otimes r_\star) = (E_i \otimes D_i) (\id \otimes r_\star) =  E_i \otimes D_i \circ  r_\star  = E_i \otimes r_\star (\id \otimes D_i) =(\id \otimes r_\star )\circ Q_{12}. 
\end{align}
 
Next we show $(a \star b) \prbullet f = a \prbullet ( b \prbullet f)$. Using \eqref{ec2} we compute 
\begin{align*}
	l'_\star  (\mu_\star(a \otimes b) \otimes f)        
	  & =     l_\star \E^{\lambda Q} (\mu_\star(a \otimes b) \otimes f)           \\
	  & =	l_\star \E^{\lambda Q} (\mu_\star \otimes \id)( a\otimes b \otimes f) = 
	l_\star (\mu_\star \otimes \id) \E^{\lambda(Q_{13} + Q_{23})}
\end{align*}
and using \eqref{ec1} 
\begin{align*}
	l'_\star(a \otimes l'_\star( b\otimes f)) & =                                                                                                 
	l_\star \E^{\lambda Q} ( a \otimes l_\star \E^{\lambda Q} (b \otimes f)) \\ 
	                                          & =  l_\star \E^{\lambda Q} \circ (\id \otimes l_\star) \E^{\lambda Q_{23}} (a \otimes b \otimes f) \\
	                                          & =  l_\star (\id \otimes l_\star ) \E^{\lambda (Q_{13} + Q_{23})} (a \otimes b \otimes f)    .     
\end{align*}
In the last step we used the fact that the $D_i$ commute to get $ \E^{\lambda (Q_{13} + Q_{23})}= \E^{\lambda Q_{13}} \E^{\lambda Q_{23}}$.
 
Comparing these two and using that fact that $l_\star$ is a left-module  gives $(a \star b) \prbullet f = a \prbullet ( b \prbullet f)$ as wanted.
 
Next we want to show that it is also a bimodule, so we compute $ ( a \prbullet f) \bullet b$ and $ a  \prbullet ( f \bullet b)$:
\begin{align*}
	r_\star (l_\star \E^{\lambda Q} (a \otimes f) \otimes b ) = r_\star \circ (l_\star \otimes \id) \circ \E^{\lambda Q_{12}} (a\otimes f \otimes b) 
\end{align*} 
\begin{align*}
	l_\star \E^{\lambda Q} (a\otimes r_\star (f \otimes b)                       
	  & =  	l_\star\E^{\lambda Q_{23}} \circ (\id \otimes r_\star) ( a \otimes f \otimes b) \\
	  & = l_\star \circ (\id \otimes r_\star) \E^{\lambda Q_{12}} ( a \otimes f \otimes b)  
\end{align*}
The two sides agree, since $l_\star$ and $r_\star$ form a bimodule, so we get in fact a bimodule.
 
This shows the following proposition:
\begin{prop}
	Let $(\bullet,\prbullet)$  be a bimodule deformation of $P \xrightarrow {\pr} M$, $n \in \N$, $E_i$ for $i=1, \dots,n$ be a derivation of the star product and $D_i$ for $i=1,\dots n$ a bimodule homomorphism, such that all the $E_i$ commute and also the $D_i$. Then $l_\star \E^{\lambda\sum_i E_i \otimes D_i }$, where $l_\star$ denotes the original left module structure, is again a bimodule structure with the same right module structure.
				 
	The modified bimodule has the sP-bracket
	\begin{equation}
		\FP{a,f}' = \FP{a,f} + \pr^* E_i(a) D_i(f).
	\end{equation}
\end{prop}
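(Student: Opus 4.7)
The plan is to observe that the three identities \eqref{ec1}, \eqref{ec2}, \eqref{ec3} established just above already do essentially all of the work, so the proof reduces to repackaging those identities into the required axioms and then reading off a first-order expansion. I would organize the proof as: verify the left-module axiom, verify bimodule compatibility, then compute the semiclassical limit.

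For the left-module axiom $(a \star b) \prbullet f = a \prbullet (b \prbullet f)$ with $l'_\star = l_\star \E^{\lambda Q}$, I would use \eqref{ec2} to rewrite the left hand side as $l_\star(\mu_\star \otimes \id)\E^{\lambda(Q_{13}+Q_{23})}$ applied to $a \otimes b \otimes f$, and \eqref{ec1} to rewrite the right hand side as $l_\star(\id \otimes l_\star)\E^{\lambda Q_{13}}\E^{\lambda Q_{23}}$ on the same argument, where the commutativity of the $D_i$ (hence of $Q_{13}$ and $Q_{23}$) has been used to split one exponential into a product. Associativity of $l_\star$ then makes the two expressions agree.

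For the bimodule compatibility $(a \prbullet f) \bullet b = a \prbullet (f \bullet b)$, identity \eqref{ec3} lets me push $\E^{\lambda Q}$ across $r_\star$, and both sides reduce to $r_\star(l_\star \otimes \id)\E^{\lambda Q_{12}}$ applied to $a \otimes f \otimes b$, using that $(l_\star, r_\star)$ is already a bimodule. For the sP-bracket only the first order in $\lambda$ is relevant: since the zeroth order of $l_\star$ is pointwise multiplication after pullback, expanding $l'_\star = l_\star + \lambda\, l_\star \circ Q + O(\lambda^2)$ gives
\begin{equation*}
L'_1(a,f) = L_1(a,f) + \pr^*(E_i(a))\, D_i(f),
\end{equation*}
while the right action is unchanged so $R'_1 = R_1$, and substituting into the definition of $\FP{\cdot,\cdot}$ yields the stated formula.

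The only subtle point, and the thing one must be careful about before writing anything down, is the commutativity hypothesis on the $D_i$ and on the $E_i$: without it one cannot split $\E^{\lambda(Q_{13}+Q_{23})}$ as $\E^{\lambda Q_{13}}\E^{\lambda Q_{23}}$, Baker--Campbell--Hausdorff corrections would appear, and the associativity argument would break down. Everything else is a direct assembly of identities already at hand.
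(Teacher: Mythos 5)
Your proposal is correct and follows essentially the same route as the paper: you invoke the identities \eqref{ec1}, \eqref{ec2}, \eqref{ec3} to push $\E^{\lambda Q}$ through $\mu_\star$, $l_\star$, and $r_\star$, split $\E^{\lambda(Q_{13}+Q_{23})}$ into a product using commutativity of the $D_i$, reduce both module axioms to the corresponding axioms for the original $(l_\star, r_\star)$, and extract the sP-bracket by expanding $l'_\star = l_\star + \lambda\,l_\star\circ Q + \O(\lambda^2)$. This is precisely the paper's argument, including the emphasis on the commutativity hypothesis as the point where the argument would otherwise fail.
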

 
The statement for the sP-bracket follows directly  from 
\begin{equation}
	l'_\star = l_\star \circ \E^{\lambda Q} = L_0 + \lambda (E_i \otimes D_i) + \lambda L_1 + \O(\lambda^2)
\end{equation}
and the definition of the sP-bracket.
 
With this construction it is at least in the trivial case  possible to construct lots of different, i.e. nonequivalent bimodule structures,
because there always exist derivations of a star product, e.g. the quasi-inner ones, and any  vertical differential operator, whose coefficients are also independent of $M$, gives a bimodule homomorphism for the bimodule described in the first part of \cref{ch:examples}, which gives the following corollary:
 
\begin{corollary}
	Let $M\times F \rightarrow M$ be a trivial fiber bundle and $\star$ a star product on $M$. Then the there are infinitely many nonequivalent bimodule deformations.
\end{corollary}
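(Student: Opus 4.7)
The plan is to use the preceding proposition to generate an infinite family $(\bullet_k)_{k\in\N}$ of bimodule structures starting from a single bimodule deformation on the trivial bundle, and to distinguish them via their sP-brackets using the earlier invariance of the sP-bracket under bimodule equivalence. Since that invariance implies that distinct sP-brackets force nonequivalence, it suffices to exhibit infinitely many of the modified structures whose sP-brackets are pairwise distinct.

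First I would fix a starting bimodule deformation $(\bullet,\prbullet)$ on $M\times F \to M$, namely the natural one in which $\star$ acts through the $M$-factor alone. Next I would produce the two ingredients required by the preceding proposition. For a derivation of $\star$, take a quasi-inner derivation $E := \tfrac{1}{2\I\lambda}[c,\cdot]_\star$ for some $c \in \CCinf(M)$ that does not lie in the center of $(\CCinf(M)\ph,\star)$; such a $c$ exists unless $\star$ is fully central, and so $E$ is a nonzero derivation of $\star$. For the bimodule homomorphisms, any differential operator $D$ on the fiber $F$ pulls back to a vertical differential operator on $M\times F$ whose coefficients are independent of the $M$-coordinates; since the action of $\star$ touches only the $M$-factor, such a $D$ commutes with both $l_\star$ and $r_\star$, hence is a bimodule homomorphism. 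I then pick an infinite, linearly independent sequence $D_1, D_2, \ldots$ of such operators; concretely, in a local chart $(y^1,\dots,y^m)$ on $F$, the operators $D_k := \del^k / \del (y^1)^k$ (suitably cut off to globally defined operators on $F$) suffice.

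With the pair $(E,D_k)$ and $n=1$, the preceding proposition yields bimodule structures $\bullet_k$ with
\begin{equation*}
\FP{a,f}_k = \FP{a,f} + \pr^* E(a)\, D_k(f).
\end{equation*}
To conclude, I would verify that these corrections distinguish the sP-brackets: fixing $a \in \CCinf(M)$ with $E(a)$ nonvanishing at some point $m_0 \in M$ and testing on functions of the form $f(x,y) = f_0(y)$ for suitable $f_0 \in \CCinf(F)$, the difference $\pr^* E(a)(D_k(f) - D_{k'}(f))$ reduces to $E(a)(m_0)\,(D_k(f_0)(y) - D_{k'}(f_0)(y))$, which is nonzero for appropriate $f_0$ since the $D_k$ have pairwise different differential orders. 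Invariance of the sP-bracket under equivalence then gives that the $\bullet_k$ are pairwise nonequivalent. The principal obstacle is exhibiting at least one nonzero derivation of $\star$ together with a sequence of genuinely distinct bimodule homomorphisms; the quasi-inner construction handles the first whenever $\star$ is noncommutative, and the verticality of operators on $F$ handles the second, so once these standard ingredients are in place the preceding proposition and the equivalence-invariance of the sP-bracket close the argument.
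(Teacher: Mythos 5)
Your proposal follows essentially the same route as the paper: start from the trivial bimodule on $M\times F$, apply the preceding proposition with $Q = E\otimes D_k$ to produce modified bimodule structures $l_\star \E^{\lambda E\otimes D_k}$, and then invoke the equivalence-invariance of the sP-bracket to conclude that different $D_k$ yield nonequivalent structures. Your choice of vertical operators $D_k$ of distinct differential orders with $M$-independent coefficients is exactly the class of bimodule homomorphisms the paper has in mind.

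There is, however, a gap in the choice of derivation. You take $E = \tfrac{1}{2\I\lambda}[c,\cdot]_\star$ quasi-inner, and you yourself note this only works "unless $\star$ is fully central." In fact the issue is slightly broader: the sP-bracket correction $\pr^*E(a)\,D_k(f)$ involves only the zeroth $\lambda$-order part $E^{(0)}$ of $E$, and for a quasi-inner derivation this is the Poisson bracket $\{c,a\}$. Whenever the Poisson structure associated to $\star$ is trivial (e.g.\ $\star$ commutative, or $\star$ obtained from a symplectic star product by replacing $\lambda$ with $\lambda^2$), one has $E^{(0)} = 0$, the sP-brackets of all your $\bullet_k$ coincide, and your argument no longer distinguishes them. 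The corollary as stated imposes no restriction on $\star$, and the paper's own sketch says only that "there always exist derivations of a star product, e.g.\ the quasi-inner ones," deliberately leaving room to use other derivations when the quasi-inner ones degenerate. For the trivial Poisson case one may take $E$ to be any nonzero vector field on $M$ (which is a derivation of the trivial star product with $E^{(0)} = E \ne 0$, and can be transported to any commutative $\star$ via the standard equivalence to the trivial product); with that replacement the rest of your verification goes through unchanged.
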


Here it can also be seen that even two bimodule deformations having the same sP-bracket are not equivalent. Take the trivial Poisson bracket with the trivial star product and also the trivial bimodule deformation. Then take $\tilde l_\star = l_\star \circ \E^{\lambda^2 Q}$ with $Q$ as above  this does not change the sP-bracket nor the right module but in general changes the left module structure. In contrast in this case every bimodule equivalence which changes the left also changes the right module structure.

\subsection{Examples}\label{ch:examples}

First of all there is the trivial example. So let $P= M \times G$ with manifolds $M,G$ and $\star$ a star product on $M$.
Then we can choose the trivial connection and lift the differential operators in $\star$ with this and define $a \bullet f= \sum_{r=1}^\infty C^h_r(a,f)$
This means that for $f: (x,g) \mapsto f(x,g)$ the operators only act on $x$. This clearly gives a bimodule.

Actually its enough to have a flat connection on $P \rightarrow M$. Then lifting the differential operators $C_k(a,\cdot)$ and $C_k(\cdot,a)$ in the star product to differential operators on $N$ gives a left- resp. right module  structure.

\begin{prop}
	Consider  a fibered manifold $P \rightarrow M$ and commuting vector fields 
	$X_1 \ldots X_k$ on $M$ and a horizontal lift such that also the $X^h_i$ commute. 
	Then we can define for any constant matrix $A=(a_{ij})$ a star product on $M$ by
	\begin{equation}
		a \star b= \mu(\E^{a^{ij} X_i \otimes X_j} a \otimes b)
	\end{equation}
	for $a,b \in \CCinf(M)$ and analogue on $P$ by
	\begin{equation}
		a \star_P b= \mu(\E^{a^{ij} X^h_i \otimes X^h_j} f \otimes g)
	\end{equation}
	for $f,g \in \CCinf(P)$. 
				
	Then $(\CCinf(M),\star)$  is a subalgebra of  $(\CCinf(P),\star_P)$, so we have a subalgebra deformation as described in the following section, and we also get a bimodule.
\end{prop}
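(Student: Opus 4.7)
The plan is to proceed in three steps: first establish associativity of the two products, then verify that $\pr^*$ intertwines them (which gives the subalgebra claim), and finally read off the bimodule structure from the subalgebra embedding.

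For associativity I would use the standard ``exponential of commuting derivations'' computation. Writing $F = \exp(a^{ij} X_i \otimes X_j)$, the hypothesis that the $X_i$ commute implies that each $X_j$ remains a derivation of $\star$, in the form $X_j \circ \mu \circ F = \mu \circ F \circ (X_j \otimes \id + \id \otimes X_j)$. Iterating this inside $(a \star b) \star c$, the factors rearrange into the symmetric three-fold expression
\[
\mu_3 \circ \exp\!\bigl(a^{ij}(X_i \otimes X_j \otimes \id + X_i \otimes \id \otimes X_j + \id \otimes X_i \otimes X_j)\bigr)(a \otimes b \otimes c),
\]
and the identical manipulation applied to $a \star (b \star c)$ yields the very same expression. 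The same argument verbatim, with $X^h_i$ in place of $X_i$, handles $\star_P$ using the commutativity of the horizontal lifts.

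The subalgebra claim reduces to showing $\pr^*(a \star b) = \pr^* a \star_P \pr^* b$ for all $a,b \in \CCinf(M)$. By definition of the horizontal lift, $X^h_i \circ \pr^* = \pr^* \circ X_i$, and by induction $\pr^*(X_{i_1} \cdots X_{i_n} a) = X^h_{i_1} \cdots X^h_{i_n} \pr^* a$. Because $\pr^*$ is multiplicative for the pointwise product, applying it term by term to the exponential series yields the claim.

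The bimodule structure is then formal: setting $a \bullet f := \pr^* a \star_P f$ and $f \bullet a := f \star_P \pr^* a$, the bimodule axioms $(a \star b) \bullet f = a \bullet (b \bullet f)$, $(a \bullet f) \bullet b = a \bullet (f \bullet b)$ and $f \bullet (a \star b) = (f \bullet a) \bullet b$ are immediate consequences of the associativity of $\star_P$ combined with the homomorphism property of $\pr^*$. The only non-trivial point is the combinatorial check of associativity, which is nevertheless routine once the commutativity of the derivations is invoked; everything else drops out formally from the intertwining $X^h_i \circ \pr^* = \pr^* \circ X_i$.
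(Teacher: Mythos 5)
Your proposal is correct and follows essentially the same route as the paper: use $X^h_i \circ \pr^* = \pr^* \circ X_i$ together with multiplicativity of $\pr^*$ to push the exponential through and obtain $\pr^*(a\star b)=\pr^*a\star_P\pr^*b$, then read off the bimodule from the subalgebra embedding. The only difference is that you re-derive the associativity of the exponential-type star products from the commutativity of the $X_i$ (resp.\ $X^h_i$), whereas the paper simply cites a reference for that standard fact; your argument there is sound, and the rest coincides with the paper's proof.
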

\begin{proof}
	First we note that $\star$ and $\star_P$ really define two star products. See e.g. \citep[Sect.6.2.4]{defquan} for a proof of this. So we only need to show that we get a subalgebra.
	This follows form 
	\begin{align*}
		\pr^* a \star_P \pr^*b & = \mu(\E^{a^{ij} X^h_i \otimes X^h_j} \pr^* a \otimes \pr^b) \\
		                       & = \mu( \pr^* \E^{a^{ij} X_i \otimes X_j} a \otimes b)        
		= \pr^* (a \star b),
	\end{align*}
	for all $a,b \in \CCinf(M)$, where we used $X^h \pr^a = \pr^* X(a)$.
	The bimodule structure is given by $a \bullet f \bullet b = \pr^* a \star_P f \star_P \pr^* b$.
\end{proof}

\subsection{Subalgebra deformation}

We briefly want to give some remarks  on the deformation of $\CCinf(M)$ as a subalgebra of $\CCinf(P)$ for a fibered manifold $P \xrightarrow{\pr} M$. This has  already been considered in \citep{math/0403334}, but we here want to relate it to bimodule deformations. We also do not assume a fixed given Poisson bracket on $P$.

\begin{defn}
	Given a fibered  manifold $P \xrightarrow{\pr} M$ and a star product $\star $	 on $M$, we call an algebra $(\CCinf(P)\ph,\star_P)$  a deformation as subalgebra of $P$ if 
	\begin{equation}\label{defsubalg}
		(\pr^* a) \star_P (\pr^* b) = \pr^* (a \star b) 
	\end{equation} 
	for all $a,b \in \CCinf(M)$.
\end{defn}

\begin{remark}
	Of course given a subalgebra deformation, one  also gets a bimodule deformation by defining 
	$a \bullet f = \pr^* a \star_P f$ and $f \bullet b = f \star_P \pr^* a$. This bimodule deformation is always fiber preserving.
\end{remark}

\begin{defn}
	Given a principal $G$ bundle $P \xrightarrow{\pr} M$ we call a subalgebra deformation $\star_P$ of $P$ a principal subalgebra if 
	\begin{equation}
		(g \triangleright f)  \star_P   (g \triangleright h)  = g \triangleright (f  \star_P h)
	\end{equation}
	for all $f,h \in \CCinf(P)$ and $g \in G$.
\end{defn}

\begin{prop} \label{th:sap}
	Given a subalgebra deformation of a fibered manifold $P$ and   $\p{\cdot,\cdot}_P$ the corresponding Poisson bracket, for $a,b \in \CCinf(M)$ we have 
	\begin{equation} \label{subpois}
		\pr^* \p{a,b} = \p{\pr^* a, \pr^* b}_P
	\end{equation}
\end{prop}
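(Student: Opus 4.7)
The strategy is to unwind both sides through their definitions as semiclassical limits of star commutators and use the pullback compatibility \eqref{defsubalg} directly. Recall from the section on star products that for any star product $\star''$ on a manifold $N$ the induced Poisson bracket on the undeformed functions is $\{f,g\}'' = \frac{\I}{2\lambda}[f,g]_{\star''}\big|_{\lambda=0}$. So both the Poisson bracket $\{\cdot,\cdot\}$ on $M$ coming from $\star$ and the Poisson bracket $\{\cdot,\cdot\}_P$ on $P$ coming from $\star_P$ are determined by the antisymmetric part of the first order term.

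First I would apply \eqref{defsubalg} twice, once as given and once with $a$ and $b$ swapped, to obtain
\begin{equation*}
	\pr^*(a \star b) - \pr^*(b \star a) = (\pr^* a) \star_P (\pr^* b) - (\pr^* b) \star_P (\pr^* a),
\end{equation*}
i.e.\ $\pr^*\bigl([a,b]_\star\bigr) = [\pr^* a,\pr^* b]_{\star_P}$. Since $\pr^*$ is $\K$-linear and in particular commutes with multiplication by the formal parameter $\lambda$, dividing by $2\lambda/\I$ and evaluating at $\lambda = 0$ yields
\begin{equation*}
	\pr^*\{a,b\} = \pr^*\Bigl(\tfrac{\I}{2\lambda}[a,b]_\star\big|_{\lambda=0}\Bigr) = \tfrac{\I}{2\lambda}[\pr^*a,\pr^*b]_{\star_P}\big|_{\lambda=0} = \{\pr^* a,\pr^* b\}_P,
\end{equation*}
which is the claimed identity.

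There is essentially no obstacle: the only thing to check is that the semiclassical limit commutes with $\pr^*$, which is immediate because $\pr^*$ acts only on the coefficients in $\CCinf(M)$ and leaves the powers of $\lambda$ untouched. No assumption on $\star_P$ beyond \eqref{defsubalg} and the basic star product axioms is needed; in particular the proof works without any fiber-preserving or naturality hypothesis on the subalgebra deformation.
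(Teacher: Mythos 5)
Your proof is correct and matches the paper's approach: the paper simply says the result follows from \eqref{defsubalg} "in first order in $\lambda$," and you have spelled out exactly that computation, antisymmetrizing the defining relation and extracting the coefficient of $\lambda$. The observation that $\pr^*$ commutes with the semiclassical limit is the only point worth noting, and you handled it correctly.
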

\begin{proof}
	This is a simple consequence from \eqref{defsubalg} in first order in $\lambda$.
\end{proof}

\begin{theorem} \label{th:sp}
	Given a fibered manifold $P \xrightarrow {\pr} M$ and a Poisson bracket $\p{\cdot,\cdot}_P$ on $P$ and $\p{\cdot,\cdot}$ on $M$, which is symplectic, satisfying $\pr^* \p{a,b} = \p{\pr^* a, \pr^* b}$, i.e. $\CCinf(M)$ is a Poisson subalgebra of $\CCinf(P)$, we
	get a horizontal lift, which is flat.
\end{theorem}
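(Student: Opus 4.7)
The plan is to reduce this to the bimodule case already handled in Proposition \ref{lift} and Theorem \ref{th:flat} by manufacturing a natural sP-bracket directly out of the Poisson subalgebra data. Specifically, I would define
\begin{equation*}
	\FP{a,f}^{sP} := \p{\pr^* a, f}_P \qquad \text{for } a\in\CCinf(M),\ f\in\CCinf(P),
\end{equation*}
and verify that this is a natural, fiber-preserving sP-bracket in the sense of Proposition \ref{fpprop}. Once this is established, Proposition \ref{lift} supplies the horizontal lift $X_a^h(f) = \FP{a,f}^{sP}$ on Hamiltonian vector fields (which span $TM$ in the symplectic case), and Theorem \ref{th:flat} immediately forces the associated connection to be flat.

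The verification step is essentially a translation of the three axioms of $\p{\cdot,\cdot}_P$ through the algebra morphism $\pr^*$. Property (i) of Proposition \ref{fpprop} follows from the Leibniz rule of $\p{\cdot,\cdot}_P$ together with the fact that $\pr^*(ab)=\pr^*a\,\pr^*b$. Property (ii) uses Leibniz in the second slot of $\p{\cdot,\cdot}_P$ and the subalgebra hypothesis $\p{\pr^*a,\pr^*b}_P = \pr^*\p{a,b}$. Property (iii) is the Jacobi identity of $\p{\cdot,\cdot}_P$ applied to $(\pr^*a,\pr^*b,f)$, again using the subalgebra hypothesis on the $\{\pr^*a,\pr^*b\}_P$ term. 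Naturality in the second argument (i.e. the derivation property $\FP{a,fg}^{sP} = \FP{a,f}^{sP}g + f\FP{a,g}^{sP}$) is just the Leibniz rule of $\p{\cdot,\cdot}_P$. Fiber preservation $\FP{a,\pr^*b}^{sP} = \pr^*\p{a,b}$ is the subalgebra hypothesis itself.

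With these four checks in hand, the hypotheses of Proposition \ref{lift} are satisfied, so we obtain a connection on $P$ via $X_a^h(f) = \FP{a,f}^{sP}$. Since $M$ is symplectic the Hamiltonian vector fields span $TM$ pointwise, and the third axiom (Jacobi-like identity) is precisely what was used in the proof of Theorem \ref{th:flat} to show vanishing of the curvature on pairs $(X_a,X_b)$; the same computation gives flatness here. No genuine obstacle arises: the content of the theorem is simply that a symplectic Poisson subalgebra structure already encodes a natural sP-bracket, so the flatness statement is not new beyond Theorem \ref{th:flat} and merely requires the bookkeeping above.
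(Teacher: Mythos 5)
Your proposal is correct and is essentially the same argument the paper gives: the paper also defines the lift by $X_a^h(f) = \{\pr^*a, f\}_P$, checks well-definedness and horizontality via the derivation property and the subalgebra hypothesis, and then verifies flatness by the same Jacobi-identity computation as in \cref{th:flat}. You have merely made the reduction explicit by first observing that $\{\pr^*\cdot,\cdot\}_P$ satisfies the sP-bracket axioms of \cref{fpprop} and then citing \cref{lift} and \cref{th:flat}, whereas the paper reruns that computation inline.
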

\begin{proof}
	Since $M$ is symplectic it is enough to consider Hamiltonian vector fields. 
	We define  
	\begin{equation}
		X_a^h(f) = \{\pr^* a, f\}_P \text{ for } f \in \CCinf(P).
	\end{equation}
	Since the Poisson bracket on $P$ is a derivation in the second arguments $X_a^h$ is really a vector field.
	The lift is well defined similarly to \cref{lift}.
	From  $X_a^h(\pr^* b) = \{\pr^* a,\pr^* b\}' = \pr^* \{a,b\} = \pr^* X_a(b)$ we see that $X_a^h$ is a horizontal lift of $X_a$.  
	For the curvature one finds
	\begin{align*}
		R(X_a,X_b)(f) & = [X_a^h ,X_b^h](f) - [X_a,X_b]^h(f)                                                       \\
		              & = X_a^h(X_b^h(f)) - X_a^h(X_b^h(f)) - X^h_{\{a,b\}}                                        \\
		              & = \{\pr^*a, \{\pr^*b ,f\}'\}' - \{\pr^*a,\{\pr^*b,f\}'\}' - \{\pr \{a,b\},f\}'             \\
		              & = \{\pr^*a, \{\pr^*b ,f\}'\}' + \{\pr^*a,\{f,\pr^*b\}'\}' + \{f,\{\pr^* a,\pr^* b\}'\}' =0 
	\end{align*}
\end{proof}

\begin{corollary}
	A subalgebra deformation of a fibered manifold $P \xrightarrow{\pr} M$ can only exist  if $P$ admits a flat lift.
\end{corollary}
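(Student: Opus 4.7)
The plan is to combine the two preceding results, \cref{th:sap} and \cref{th:sp}, in the obvious way. Assume that a subalgebra deformation $\star_P$ of $\CCinf(P)$ over $(\CCinf(M),\star)$ exists (and implicitly that the Poisson structure on $M$ induced by $\star$ is symplectic, as required for \cref{th:sp} to apply). The goal is to produce a flat horizontal lift for the fibration $P \xrightarrow{\pr} M$.

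First, I would extract a Poisson bracket $\p{\cdot,\cdot}_P$ on $P$ from the star product $\star_P$ by taking the semiclassical limit of its commutator, exactly as in the definition at the beginning of the paper. Then \cref{th:sap} gives immediately the compatibility $\pr^* \p{a,b} = \p{\pr^* a,\pr^* b}_P$ for all $a,b \in \CCinf(M)$, i.e.\ $\pr^*$ realises $\CCinf(M)$ as a Poisson subalgebra of $\CCinf(P)$.

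Having established this Poisson-subalgebra property, the hypotheses of \cref{th:sp} are all met (the symplecticness of $\p{\cdot,\cdot}$ on $M$ is the standing assumption in the symplectic case). Applying that theorem produces the horizontal lift $X_a \mapsto X_a^h$ with $X_a^h(f) := \p{\pr^* a,f}_P$ on Hamiltonian vector fields, extended by linearity using the fact that the $X_a$ span $\T M$ in the symplectic setting. The same theorem asserts that this lift is flat, and hence defines a flat Ehresmann connection on $P$.

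There is no serious obstacle here; the argument is essentially a one-line chain of implications, with the only delicate point being the tacit symplecticness hypothesis inherited from \cref{th:sp}. In the non-symplectic Poisson case, the same construction yields only a partial lift defined along the symplectic leaves, as already discussed after \cref{th:flat}, and the corollary then has to be interpreted leaf-wise.
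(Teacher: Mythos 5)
Your proof is correct and follows exactly the paper's own route: the paper's proof is literally "This is obvious from \cref{th:sp} and \cref{th:sap}," and you have simply spelled out that chain of implications. Your remark about the tacit symplecticness hypothesis is a fair observation, since the corollary as stated omits it even though \cref{th:sp} requires it.
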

\begin{proof}
	This is obvious from \cref{th:sp} and \cref{th:sap}
\end{proof}
Of course this also follows from \cref{th:flat} since a subalgebra gives rise to a fiber-preserving bimodule.

On the other hand if we have a flat lift, for example if one already has a subalgebra deformation or a $\pr^*$ related Poisson bracket on $P$, one can use this horizontal lift to lift the differential operators $C_k$ in the star product to get a star product on $P$, which gives a subalgebra deformation. The Poisson bracket on $P$ in this case is the lift of the Poisson bracket on $M$. 
But it turns out that this lifted Poisson bracket is in general different from the original one on $P$, since the lifted one can contain a term with both vector fields vertical. For example the Poisson structure on $M$ could be zero, but the one on $P$ is only vertical but nonzero.

\section{Generalization of the HKR theorem}

In this section we want to compute  the differential Hochschild cohomology of $\CCinf(P)$ and $\DO(P)$ as $\CCinf(M)$-bimodules. For this we first need some more technical constructions, for which we follow \citep{art}.

\subsection{Hochschild cohomology}\label{ch:hc}

Let $\M$ be a bimodule over an algebra $\A$ and 
\begin{equation}
	\HC^k (\A,\M)= \Hom(\A^{\otimes k},\M) \cong \Hom(\underbrace{\A,\ldots, \A}_{k \textrm{ times}},\M),
\end{equation}
where the isomorphisms follows from the universal property of the tensor product.
Here $\Hom(\A, \ldots ,\A;\M)$ means the multilinear maps  from $\A$ to $\M$ as vector spaces. 

Then we can define a differential on the complex $\HC^\bullet(\A,\M)$ by
\begin{align}
	\delta_i^{n}f(a_1\otimes \ldots \otimes a_{n+1}) = 
	\begin{cases} 
	a_1\cdot f(a_2\otimes \ldots \otimes a_{n+1})                        & \text{for }i=0   \\
	f(a_1\otimes \ldots \otimes a_ia_{i+1}\otimes \ldots\otimes a_{n+1}) & \text{for }0<i<n \\  
	f(a_1\otimes \ldots \otimes a_n)\cdot a_{n+1}                        & \text{for }i=n   
	\end{cases}
\end{align}
\begin{equation}
	\delta^n := \sum_{i=0}^n (-1)^i \delta^n_i
\end{equation}

One can compute that  $\delta^n \circ \delta^{n-1} =0$, so we can make the following definition:
\begin{defn}
	The $n$-th cohomology group of $\HC^n(\A,\M)$ can be  defined by
	\begin{equation}
		\HH^n(\A,\M) =\factor{ \ker(\delta^{n+1})}{\im(\delta^n)}
	\end{equation}
	This is the so called Hochschild cohomology of $\M$.
			
	We note that if $\A$ is commutative, $\HH^n(\A,\M)$ is again an $\A$-bimodule.
\end{defn}

Later we will consider the case $\A = \CCinf(M)$ for a manifold $M$ and $\M = \DO(N)$ or $\M = \CCinf(N)$ where we have a map $N \xrightarrow{\pr} M$.  The bimodule structure on  $\DO(N)$ is defined by
\begin{equation}
	(a \cdot D \cdot b)(f) =  \pr^* a D( \pr^* b f)
\end{equation}
and similarly for $\CCinf(P)$.

\begin{defn}[Differential Hochschild complex]
	Let $\A$ be a commutative algebra, then we define the differential Hochschild complex by 
	\begin{equation}
		\HC^\bullet_\diff(\A,\M) = \bigoplus_{k=0}^\infty \HC^k_\diff (\A,\M) \subset \HC^\bullet(\A,\M)
	\end{equation}
	with 
	\begin{equation}
		\HC^k_\diff(\A,\M) = \DO^\bullet(\underbrace{\A,\ldots,\A}_{k \text{ times}};\M).
	\end{equation}
	The corresponding Hochschild cohomology we denote by $\HH^\bullet_\diff(\A,\M)$.
\end{defn}

In the case of $\HC_\diff(\A,\DO(P))$ we slightly modify this and set 
\begin{equation}
	\HC_\diff^k(\A,\DO(P)) = \bigcup_{L \in \N^k_0} \bigcup_{l \in \N_0} \DO^L(\A,\dots,\A;\DO(P)).
\end{equation}

To see that this actually is a subcomplex one needs that $\M$ is a differential bimodule, to make sure that the differential restricts to the set of differential operators. The multiplication with an element of the algebra and the concatenation of differential operators is again a differential operator, so the only operation in the definition of the differential which needs not to be a differential operator is the right module multiplication.

\begin{defn} [Differential bimodule] \label{de:difbim}
	Let $\M$ be an $\A$-bimodule. Then $\M$ is called a differential bimodule if for all $a \in \A$ the map $\M \rightarrow \M: f \mapsto f \cdot a$ is a differential operator.
\end{defn}

The algebra we will use later will be $\CCinf(M)$ for a manifold $M$. This is a Fréchet algebra, with the usual  seminorms.
On $\DO$ we use the topology given by the local presentation. This is $D = \sum_I D^I \del_I$ for some multiindex $I$. We have seminorms for all $I$ given by the seminorms of $D^I$ considered as smooth functions. 

Since we are not interested in arbitrary homomorphisms but only in continuous ones  we also define

\begin{defn}[Continuous Hochschild complex]
	Let $\A$ be a commutative topological algebra and $\M$ a topological bimodule then we define  the continuous Hochschild complex by 
	\begin{equation}
		\HC^k_\cont(\A,\M) = \Hom_\cont(\A, \ldots, \A;\M) \subset \HC^k(\A,\M)
	\end{equation}
	where $\Hom_\cont$ denotes the space of all continuous homomorphism.
			
	Since $\delta$ maps continuous homomorphisms to continuous homomorphisms, it can be restricted to this subcomplex and we get the continuous Hochschild cohomology.
\end{defn}

Since in our situation every differential operator is continuous we have $\HC_\diff \subset \HC_\cont$.

\subsection{Bar complex}

We recall the definition of the bar complex adopted  to our situation following \citep{art,weisphd}.

We consider $\A =\CCinf(V)$ for an  convex open  subset $V$ of $\R^n$  and use $\A^e = \A \otimes \A^\opp$, which is an algebra for the obvious componentwise multiplication.
In our case of course $\A^\opp = \A$ but in general one needs $\A^\opp$. For the complexes we use, we actually need the completion of $\A^e$ in the projective topology of the tensor product, which we will denote by $\otimeshat$.

\begin{defn}
	We define the bar complex $X_\bullet$ as 
	\begin{align}
		X_0 = \A^e = \CCinf(V \times V)     \\
		X_k = \CCinf(V \times V^k \times V) 
	\end{align}
	with differential $\del^k_X: X^k \rightarrow X^{k-1}$ given by 
	\begin{equation}
		\begin{split}
			(\del^k_X \phi) (v,q_1,\ldots,q_{k-1},w) = &\phi(v,v,q_1,\ldots,q_{k-1},w) 
			+ \sum_{i=1}^{k-1} (-1)^i \phi(v,q_1,\ldots.q_i,q_i,\ldots,q_{k-1},w) \\
			&+(-1)^k \phi(v,q_1,\ldots,q_{k-1},w,w)
		\end{split}
	\end{equation}
\end{defn}

We have $X_k \cong \A^e \otimeshat \A^{\otimeshat k}$ for the completion in the projective topology of the  tensor product induced by the Fréchet topology of $\CCinf(V)$, because for the completed tensor product one has $\CCinf(V) \otimeshat \CCinf(V) = \CCinf(V\times V)$. For details  see \citep{jarchow} especially Section 21.6.
 
The $\A^e$-module structure is given by
\begin{equation}
	(a \chi)(v,q_1,\ldots.q_k,w) = a(v,w)\chi(v,q_1,\ldots,q_k,w) 
\end{equation}
for $a \in \A^e, \chi \in X_k$  and $v,w,q_1,\ldots,q_k \in V$, which corresponds to the algebra multiplication in the $\A^e$ factor of the $X_k$.

\begin{lemma}
	We get a  resolution of $\A$ as $\A^e$-module by an exact sequence
	\begin{equation} \label{bc}
		0 \longleftarrow \A \overset{\epsilon }{\longleftarrow} X_0  \overset{\del^1_X }{\longleftarrow} X_1 \leftarrow \cdots  \overset{\del^k_X }{\longleftarrow} X_k \longleftarrow \cdots, 
	\end{equation} 
	where 
	\begin{equation}
		(\epsilon a)(v) = a(v,v).
	\end{equation}
\end{lemma}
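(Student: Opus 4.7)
The plan is to prove exactness by exhibiting an explicit contracting homotopy, the standard technique for bar-type resolutions, adapted to the smooth/Fréchet setting. That $\epsilon$ and each $\del^k_X$ are $\A^e$-linear is transparent: the $\A^e$-action multiplies by $a(v,w)$, which is unaffected by either duplicating interior arguments or restricting to the diagonal $v = w$.

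First I would check $\del^k_X \circ \del^{k+1}_X = 0$ and $\epsilon \circ \del^1_X = 0$ directly. Writing $\del^k_X = \sum_i (-1)^i d_i$ with $d_i$ the face that duplicates the $i$-th argument, the simplicial identities $d_i d_j = d_{j-1} d_i$ for $i < j$ hold by inspection of evaluation points, and the alternating sum collapses exactly as in the classical algebraic bar resolution.

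The core of the proof is the contracting homotopy. I would define the $\K$-linear maps
\begin{equation*}
	(s_k \phi)(v, q_1, \ldots, q_{k+1}, w) := \phi(q_1, q_2, \ldots, q_{k+1}, w), \qquad (\eta a)(v, w) := a(w),
\end{equation*}
so $s_k : X_k \to X_{k+1}$ is the pullback along the smooth projection $V \times V^{k+1} \times V \to V \times V^k \times V$ that forgets the first coordinate, and $\eta : \A \to X_0$ is the pullback of $a \in \CCinf(V)$ to $\CCinf(V \times V)$ via the second projection. Both are continuous for the Fréchet topology, which is what makes the argument survive the passage to the projective-tensor-product completion $\otimeshat$. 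A direct computation — matching the face $i = 0$ of $\del^{k+1}_X s_k \phi$ with $\phi$ itself, and pairing each remaining face of $\del^{k+1}_X s_k \phi$ against the corresponding face of $s_{k-1} \del^k_X \phi$ with opposite sign (after a shift of summation index) — yields
\begin{equation*}
	\epsilon \circ \eta = \id_\A, \qquad \del^1_X \circ s_0 + \eta \circ \epsilon = \id_{X_0}, \qquad \del^{k+1}_X \circ s_k + s_{k-1} \circ \del^k_X = \id_{X_k} \ (k \geq 1).
\end{equation*}
Exactness follows at once: every cycle is a boundary, and $\epsilon$ is surjective because $\epsilon \circ \eta = \id_\A$.

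I expect no substantive obstacle. The one conceptual point worth flagging is that $s_k$ and $\eta$ are not $\A^e$-linear — but $\A^e$-linearity is required only of the differentials and the augmentation (where it indeed holds), not of the homotopy, so exactness as a chain complex of $\A^e$-modules is unaffected. It is worth remarking that the same homotopy shows the resolution is in fact contractible in the underlying category of Fréchet spaces, which is stronger than mere exactness and will be useful whenever one wants to compute the Hochschild complex by applying $\Hom_{\A^e}(\cdot, \M)$ to $X_\bullet$.
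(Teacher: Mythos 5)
Your proof is correct and takes essentially the same approach as the paper: both arguments establish exactness by writing down an explicit contracting homotopy, noting that it is $\K$-linear and continuous but not $\A^e$-linear (which is harmless, since only the differentials and augmentation need $\A^e$-linearity). The only cosmetic difference is the choice of extra degeneracy --- the paper's $h^k_X$ forgets the last argument $w$ and carries the sign $-(-1)^k$ so that the identity term emerges from the $(k+1)$-st face, whereas your $s_k$ forgets the first argument $v$ and needs no sign because the identity term emerges from the $0$-th face.
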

\begin{proof} 
	One easily sees that  $\del_X$ and $\epsilon$ are $\A^e$ linear and a computation shows that $\del_X \circ \del_X=0$ and $\epsilon  \circ \del_X=0$, so we really have  a complex.
			 
	To show that it is exact one uses the homotopies $h^k_X : X_k \rightarrow X_{k+1}$
	\begin{align}
		(h^{-1}_X a)(v,w)                     & = a(v)                               \\
		(h^k_X \chi )(v,q_1,\ldots,q_{k+1},w) & = -(-1)^k \chi(v,q_1,\ldots,q_{k+1}) 
	\end{align}
	For details see \citep[Ch.3]{art}.
\end{proof}

We note that the homotopies $h^k_X$ are not $\A^e$ linear, which will cause some trouble later.

\begin{remark}
	This resolution is topologically free, but not in the purely algebraic setting. This is one reason, why one cannot simple apply the standard techniques of homological algebra. For the continuous case one could still use them, see \citep{pflaum,connes}, however  not for the differential cohomology.
\end{remark}

\subsection{Koszul complex}

Next we need another complex, which cannot be defined for $\CCinf(M)$ for an arbitrary manifold $M$, but only for the special case of a convex subset of $\R^n$. However, we will later be able to compute the Hochschild cohomology for arbitrary manifolds by localizing to convex sets.  In the definition of the Kozsul complex and the related chain maps we follow \citep[Sect.5.4]{weisphd}.

Let $\A= \CCinf(\R^n)$ or $\A= \CCinf(V)$ where $V  \subset \R^n$ is a convex open set. For a (finite dimensional) vector space $W$ we denote by  $\Lambda^\bullet(W)$ the antisymmetric tensor algebra over $W$, and by $W^*$ its dual.
\begin{defn}
	We define the Koszul complex $(K,\del_K)$ over  $\A$  as
	\begin{align}
		K_0 & = \A^e                                                                     \\
		K_k & = \A^e \otimes \Lambda^k(\R^n)^* \cong \CCinf(V\times V,\Lambda^k(\R^n)^*) 
	\end{align}
	Also every  $K_k$ has an $\A^e$-module structure by multiplication in the first factor.
				
	Next we define the differential $\del^k_K : K_k \rightarrow K_{k-1}$ by
	\begin{equation}
		(\del_K \omega )(v,w)(x_1,\ldots, x_{k-1}) = (\omega (v,w))(v-w,x_1,\ldots, x_{k-1})
	\end{equation} 
	for $\omega \in K_k$ and $v,w \in V, x_i \in \R^n$.
\end{defn}

Note that for the definition of the differential we need to insert $v-w$, which is actually a point on the manifold $V$, into a form. This is one reason, why one can define the Koszul complex only for a subset of $\R^n$. Actually it would be enough to consider $V=\R^n$ since every convex open set is diffeomorphic to $\R^n$.

We get the following finite and free resolution 
\begin{equation} \label{kc}
	0 \longleftarrow \A \overset{\epsilon }{\longleftarrow} K_0  \overset{\del^1_K }{\longleftarrow} K_1 \leftarrow  \cdots  \overset{\del^k_K }{\longleftarrow} K_n 
\end{equation}

Again one has to check that $\del_i \circ  \del_{i+1} =0$, which is a straightforward calculation using the fact that one has to insert the same argument $v-w$ twice. 

\begin{lemma}
	The sequence  \eqref{kc} is exact.
\end{lemma}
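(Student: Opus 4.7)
My plan is to exhibit $K_\bullet$ as a contractible augmented complex by constructing a chain homotopy $h_k\colon K_k\to K_{k+1}$ (with $K_{-1}:=\A$ and $h_{-1}\colon\A\to K_0$) satisfying
\begin{equation*}
\del_K^{k+1}\circ h_k + h_{k-1}\circ \del_K^k = \id_{K_k},\qquad \epsilon\circ h_{-1}=\id_\A.
\end{equation*}
Once this is in place every cycle is automatically a boundary, giving exactness. The analytic input, available because $V\subset\R^n$ is convex, is the Hadamard remainder formula: for every $f\in\CCinf(V\times V)$,
\begin{equation*}
f(v,w) = f(w,w) + \sum_{i=1}^n (v_i-w_i)\,g_i(v,w),\qquad g_i(v,w):=\int_0^1 \frac{\del f}{\del v_i}\bigl(w+t(v-w),w\bigr)\,dt,
\end{equation*}
with $g_i\in\CCinf(V\times V)$.

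First I would handle the augmentation and degree zero. Setting $(h_{-1}a)(v,w):=a(w)$ and $(h_0 f)(v,w):=\sum_i g_i(v,w)\,e^i$, the identity $\epsilon h_{-1}=\id_\A$ is immediate, while $(\del_K^1 h_0 f)(v,w) = \sum_i (v_i-w_i)g_i(v,w) = f(v,w)-f(w,w) = f(v,w) - (h_{-1}\epsilon f)(v,w)$ follows directly from Hadamard. This proves exactness at $\A$ and at $K_0$.

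In higher degrees $k\geq 1$ I would extend $h_0$ by combining the Cartan-type relation
\begin{equation*}
\iota_{v-w}(e^i\wedge\eta) + e^i\wedge \iota_{v-w}\eta = (v_i-w_i)\,\eta \qquad (\eta\in\Lambda^\bullet(\R^n)^*),
\end{equation*}
with a Hadamard integral along the segment $t\mapsto w+t(v-w)$ from the diagonal to $(v,w)$. Concretely, for $\omega=\sum_I f_I\,e^I\in K_k$ one sets $h_k\omega := \sum_{i,I} \tilde g_{I,i}(v,w)\, e^i\wedge e^I$, with $\tilde g_{I,i}$ a suitably $t^k$-weighted Hadamard integral of $\del f_I/\del v_i$; the weight is chosen so that when $\del_K$ acts and the Cartan identity splits each term into a contraction on $e^i$ and a contraction on $e^I$, the first group reconstructs $\omega$ via the Hadamard formula while the second cancels $h_{k-1}\del_K\omega$.

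The main obstacle will be fixing these weights and signs so that the two integral contributions combine correctly; this is a direct analogue of the Poincar\'e-lemma bookkeeping, with the straight-line retraction onto the diagonal replacing the usual retraction onto the origin. A more conceptual alternative, which avoids sign-chasing entirely, is to identify $K_\bullet$ with the total complex of $\bigotimes_{i=1}^n K^{(i)}_\bullet$, where $K^{(i)}_\bullet$ is the one-variable Koszul complex $0\to\A^e\xrightarrow{\,\cdot(v_i-w_i)\,}\A^e\to 0$, and to induct on $n$: the base case is exactly the smooth Hadamard lemma, and each inductive step uses only that multiplication by $v_{i+1}-w_{i+1}$ remains injective on the relevant smooth quotient, which again follows from convexity.
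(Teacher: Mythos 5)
Your primary construction is exactly the paper's: the $t^k$-weighted Hadamard integral along the straight-line retraction onto the diagonal, wedged with an $e^i$, is precisely the homotopy $(h^k_K\omega)(v,w)=-\sum_j e^j\wedge\int_0^1 t^k\,\frac{\del\omega}{\del w^j}(v,tw+(1-t)v)\,\d t$ appearing in the proof, up to the cosmetic symmetry of retracting the first coordinate from $w$ to $v$ rather than the second from $v$ to $w$, and your degree-zero verification is correct while the higher-degree sign/weight bookkeeping you defer is exactly what the paper defers to the cited thesis. Your tensor-product alternative via $K_\bullet\cong\bigotimes_i K^{(i)}_\bullet$ is a genuinely different and more conceptual route, but as sketched it has a gap: the inductive step needs more than injectivity of multiplication by $v_{i+1}-w_{i+1}$, since in the Fr\'echet/$\otimeshat$ setting one must verify that the K\"unneth-type comparison (equivalently, that the tensor of the one-variable contracting homotopies remains continuous and well-defined after completion) actually holds, which amounts to roughly the same analytic work as writing down the explicit contraction in the first place.
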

\begin{proof}
	Use the homotopies $h^k_K : K_k \rightarrow K_{k+1}$
	\begin{equation}
		(h^k_K \omega)(v,w) = - \sum_{j=1}^n e^j \wedge \int t^k \frac{\del \omega}{\del w^j} (v, tw +(1-t)v) \d t 
	\end{equation}
	For details see \citep[Sect. 5.4]{weisphd}.
\end{proof}

Note that for this we needed the convexity of $V$ and the completion of the tensor product.

Using  $\xi_i = x_i \otimes 1 - 1 \otimes x_i \in \A^e$, we can write the differential on forms $e^I = \left({e^1} \right)^{\wedge I_1} \wedge \cdots  \wedge \left({e^n}\right)^{\wedge I_n}$ with a multiindex $I \in \Z^n$, where all $I_j $ can be assumed to be  0 or 1, because otherwise it would vanish,  as 
\begin{equation}
	\del_K e^I = \sum_i \xi_i \otimes  \iota_{e_i} e^I.
\end{equation}
This can be easily seen using the fact that $\delta_K e^i = \xi_i$.

Next we define $F^k: K_k \rightarrow X_k$
\begin{equation}
	F^k(\omega)(v,q_1,\ldots,q_k,w)=  \omega(v,w)(q_1-v,\ldots, q_k - v) 
\end{equation}

and $G^k: X_k \rightarrow K_k$
\begin{align*}
	(G^k \chi)(v,w) & = \sum_{i_1\ldots i_k=1}^n e^{i_1}\wedge \cdots \wedge e^{i_k} \int\limits_0^1 \d t_1 
	\cdots \int\limits_0^{t_{k-1}} \d t_k   \\
	                & \frac{\del^k \chi}{\del q_1^{i_1} \cdots \del q_k^{i_k}}                              
	(v,t_1 v +(1- t_1)w, \ldots, t_k v +(1- t_k)w, w).
\end{align*}
The definition of $F$ appears first in \citep[Sect.III.2$\alpha$]{connes} and the $G$ originates in \citep{bordemann}.
The maps $F$ and $G$ are chain maps and $\A^e$-module homomorphisms, this means we get the following commutative diagram of $\A^e$-linear maps:

\begin{tikzpicture}
	[node distance=1.5cm]
	\node(x0) {$0$};
	\node(x1) [right=of x0] {$\A$};
	\node(x2) [right=of x1]{$\X_0$};
	\node(x3) [right=of x2]{$\cdots$};
	\node(x4) [right=of x3]{$X_k$};
	\node(x5) [right=of x4]{$X_{k+1}$};
	\node(x6) [right=of x5]{$\cdots$};
	\node(k0) [below=of x0]  {$0$};
	\node(k1) [right=of k0]{$\A$};
	\node(k2) [right=of k1]{$K_0$};
	\node(k3) [right=of k2]{$\cdots$};
	\node(k4) [right=of k3]{$K_k$};
	\node(k5) [right=of k4]{$K_{k+1}$};
	\node(k6) [right=of k5]{$\cdots$};
	\path[->] (x1) edge (x0)
	(x2) edge node[auto] {$\epsilon$} (x1)
	(x3) edge node[auto] {$\del^1_X$} (x2);
	\path[->] (x4) edge node[auto] {$\del^{k}_X$} (x3)
	(x5) edge node[auto] {$\del^{k+1}_X$} (x4)
	(x6) edge node[auto] {$\del^{k+2}_X$} (x5);
	\path[->] (k1) edge  (k0)
	(k2) edge node[auto] {$\epsilon$} (k1)
	(k3) edge node[auto] {$\del^1_K$} (k2);
	\path[->] (k4) edge node[auto] {$\del^{k}_X$} (k3)
	(k5) edge node[auto] {$\del^{k+1}_K$} (k4)
	(k6) edge node[auto] {$\del^{k+2}_K$} (k5);
	\draw[double] (x1) -- (k1);
	\draw[double] (x2) -- (k2);
	\path[->] (x4.-80) edge [left] node{$F^k$} (k4.90);
	\path[->] (k4.110) edge [right] node{$G^k$} (x4.-100);
	\path[->] (x5.-80) edge [left] node{$F^{k+1}$} (k5.90);
	\path[->] (k5.110) edge [right] node{$G^{k+1}$} (x5.-100);
\end{tikzpicture}

One can show that
$G^k \circ F^k = \id$
which proves that
$\Theta^k = F^k \circ G^k $
is a projection.
Further one can compute explicitly 
\begin{equation}
	\begin{split}
		(\Theta^k \chi)(v,q_1,\ldots,q_k,w) = \sum_{i_1\ldots i_k=1}^n \sum_{\sigma\in S_k} \sign(\sigma) (q_1-v)^{i_{\sigma(1)}} \cdots (q_k-v)^{i_{\sigma(k)}} \\ 
		\int_0^1 \d t_1 \cdots \int_0^{t_{k-1}} \d t_k
		\frac{\del^k \chi}{\del q_1^{i_1} \cdots \del q_k^{i_k}} (v,t_1 v +(1- t_1)w, \ldots, t_k v +(1- t_k)w, w),
	\end{split}
\end{equation}
where $S_k$ is the symmetric group with $k$ elements, and the upper indices on the brackets denote the components.

\begin{remark}
	The explicit homotopies $F,G$ and $\Theta$ would not be necessary in the completely algebraic context, because their existence can be proven in a completely abstract way, but we need them here to make sure that everything  stays in the continuous or differential  Hochschild cohomology.
\end{remark}

\begin{lemma} \label{th:sk}
	There exists a homotopy between $\Theta^\bullet$ and $\id_{X_\bullet}$.
\end{lemma}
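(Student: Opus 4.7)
The plan is to construct the homotopy $s^k \colon X_k \to X_{k+1}$ inductively, exploiting the contracting homotopy $h_X^k$ of the bar resolution. Set $s^{-1} := 0$ and define recursively
\[
  s^k := h_X^k \circ \bigl(\Theta^k - \id_{X_k} - s^{k-1} \circ \del_X^k\bigr).
\]
The claim is that this choice satisfies the chain homotopy identity $\del_X^{k+1} \circ s^k + s^{k-1} \circ \del_X^k = \Theta^k - \id$.

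I would verify this by induction on $k$. The base case $k=0$ uses that both $\id$ and $\Theta^0 = F^0 G^0$ cover $\id_\A$, so $\epsilon \circ (\Theta^0 - \id) = 0$ and the expression lies in $\ker\epsilon = \im\del_X^1$, whence $h_X^0$ produces a valid preimage. For the inductive step, assume the identity at level $k-1$. Put $\phi^k := \Theta^k - \id - s^{k-1}\circ \del_X^k$. Using that $\Theta$ is a chain map one gets $\del_X^k \circ (\Theta^k - \id) = (\Theta^{k-1} - \id)\circ\del_X^k$, and by the induction hypothesis the right-hand side equals $\del_X^k \circ s^{k-1} \circ \del_X^k$ (the second term $s^{k-2}\circ \del_X^{k-1}\circ\del_X^k$ vanishes since $\del_X\circ\del_X = 0$). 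Hence $\del_X^k \phi^k = 0$. Applying the contracting identity $\del_X^{k+1}\circ h_X^k + h_X^{k-1}\circ\del_X^k = \id_{X_k}$ to $\phi^k$ then yields $\del_X^{k+1} s^k = \phi^k - h_X^{k-1}(0) = \phi^k$, which is exactly the homotopy identity at level $k$.

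The subtle point — emphasized in the Remark preceding the lemma — is regularity: the $h_X^k$ are given by evaluation and are not $\A^e$-linear, but they are continuous, as are $\del_X^k$, $\Theta^k$, $F^k$, and $G^k$. Since $s^k$ is built by composing these ingredients, each step preserves continuity, and where the inputs are differential operators the output is a composition of a differential operator with an evaluation-type map, so the relevant behaviour in the differential or continuous Hochschild complex is preserved. The main obstacle is therefore not the construction itself (which is the standard inductive comparison argument), but the bookkeeping that establishes the needed regularity of $s^k$ at each order; loss of $\A^e$-linearity must be accepted, since the homotopy statement is purely at the level of chain complexes and does not require module-linearity of $s^k$.
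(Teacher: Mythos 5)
Your inductive construction $s^k := h_X^k \circ (\Theta^k - \id - s^{k-1}\circ\del_X^k)$, verified via the contracting-homotopy identity of the bar resolution, is exactly the standard comparison argument, and the appearance of iterated $h_X$-compositions is consistent with the factorial growth of the order bounds quoted in the paper's Proposition~\ref{th:kdiff}, so this is essentially the approach the paper (via the cited thesis) has in mind. You correctly flag that the substantive remaining work is the regularity bookkeeping for the differential/continuous subcomplex, but that is indeed the content of the separate Proposition~\ref{th:kdiff} rather than of this lemma, so your proof is adequate as stated.
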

\begin{proof}
	See \citep[Proposition 5.7.2]{weisphd}
\end{proof}

We now consider the vector space $\Hom^\mathrm{cont}_{\A^e}(X_k,\M)$ of continuous $\A^e$-linear maps.
With the pullback of the differentials $\delta^{k}_X = (\del^k_X)^*$, defined by $(\del^* \phi)(a) = \phi(\del a)$ for $\phi\in\Hom^\mathrm{cont}_{\A^e}(X_k,\M)$ and $a \in X_\bullet)$, we get the complex $(\Hom^\mathrm{cont}_{\A^e}(X_\bullet,\M), \delta_X)$.

\begin{prop}
	The complexes $(\Hom^\mathrm{cont}_{\A^e}(X_\bullet,\M), \delta_X)$ and $\HC^k_\mathrm{cont}(\A,\M)$ are isomorphic with isomorphism $\Xi: \Hom^\mathrm{cont}_{\A^e}(X_\bullet,\M) \rightarrow \HC^k_\mathrm{cont}(\A,\M)$
	\begin{equation}
		(\Xi^k \psi)(a_1,\ldots,a_k) = \psi(1 \otimes a_1 \otimes \cdots a_k \otimes 1).
	\end{equation}
\end{prop}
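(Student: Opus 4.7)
The plan is to construct an explicit inverse to $\Xi$ and then verify it intertwines the two differentials. This is a standard bar-complex / Hochschild complex identification, but here we must be careful with the projective-tensor completion and continuity.

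First I would define the candidate inverse $\Psi : \HC^\bullet_\cont(\A,\M) \to \Hom^\cont_{\A^e}(X_\bullet, \M)$ on elementary tensors by
\begin{equation*}
(\Psi^k \phi)(a_0 \otimes a_1 \otimes \cdots \otimes a_k \otimes a_{k+1}) = a_0 \cdot \phi(a_1,\ldots,a_k) \cdot a_{k+1},
\end{equation*}
where $a_0 \otimes a_{k+1} \in \A^e$ and the remaining factors land in $\A^{\otimeshat k}$. Since $\phi$ is continuous and multilinear, this formula extends uniquely by continuity and $\A^e$-linearity to all of $X_k = \A^e \otimeshat \A^{\otimeshat k}$; this is where the identification $\CCinf(V)\otimeshat \CCinf(V) = \CCinf(V\times V)$ together with the universal property of the projective tensor product is used. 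I would verify separately that $\Psi^k\phi$ is continuous (immediate from continuity of $\phi$ and of module multiplication on $\M$) and $\A^e$-linear (by construction).

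Next I would verify $\Xi^k \circ \Psi^k = \id$ and $\Psi^k \circ \Xi^k = \id$. The first identity is immediate from plugging in $1\otimes a_1\otimes\cdots\otimes a_k\otimes 1$ into $\Psi^k\phi$, which produces $1\cdot\phi(a_1,\ldots,a_k)\cdot 1 = \phi(a_1,\ldots,a_k)$. For the second identity, given $\psi \in \Hom^\cont_{\A^e}(X_k,\M)$, both $\Psi^k(\Xi^k\psi)$ and $\psi$ are continuous $\A^e$-linear maps on $X_k$ that agree on the subset $\{1\otimes a_1\otimes \cdots \otimes a_k\otimes 1\}$; by $\A^e$-linearity they agree on all elementary tensors, and by density of these in the projective completion together with continuity of both sides they agree on all of $X_k$.

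Finally I would check the chain map property $\Xi^{k+1}\circ \delta^{k+1}_X = \delta^{k+1}\circ \Xi^k$. Unpacking, for $\psi \in \Hom^\cont_{\A^e}(X_k,\M)$,
\begin{equation*}
(\Xi^{k+1}\delta_X^{k+1}\psi)(a_1,\ldots,a_{k+1}) = \psi\bigl(\del_X^{k+1}(1\otimes a_1\otimes\cdots\otimes a_{k+1}\otimes 1)\bigr),
\end{equation*}
and the definition of $\del_X^{k+1}$ produces exactly the terms $a_1\cdot\psi(1\otimes a_2\otimes\cdots)\otimes 1)$, the interior ``multiply neighbors'' terms, and the final $\psi(\cdots)\cdot a_{k+1}$ term, with alternating signs; this matches the definition of $\delta^{k+1}$ on $\HC^k_\cont$ applied to $\Xi^k\psi$. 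The only delicate point is that the outer differentials of $\del_X$ feed an adjacent argument into the $\A^e$-slot and one must use $\A^e$-linearity of $\psi$ to pull it out as a module action, which is precisely why the bimodule structure on $\HC$ was defined the way it was. I expect the main (minor) obstacle to be the topological bookkeeping in step one — ensuring that the formula for $\Psi$ really extends from the algebraic tensor product to its projective completion — but this is handled by the universal property of $\otimeshat$ together with continuity of $\phi$.
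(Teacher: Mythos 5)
Your proposal is correct and takes essentially the same route as the paper: the paper likewise defines $\Xi$, notes it is a chain map by a direct computation, and identifies the inverse on generators $1\otimes a_1\otimes\cdots\otimes a_k\otimes 1$, invoking the universal property of the projective tensor product for continuous maps and referring to \citep[Prop.\ 5.2.1]{weisphd} for the details you have written out. Your more explicit statement of the inverse on all elementary tensors and the density/continuity argument is just the unpacking of that reference.
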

\begin{proof}
	$\Xi$ is  a chain map, since one can easily see that  
	\begin{equation}
		\Xi \circ \delta_X = \delta \circ \Xi.
	\end{equation}
	The map $\Theta$ is an isomorphism, because of the universal property of the tensor product for continuous maps. The inverse is given by
	\begin{equation}
		\Xi^{-1}(1 \otimes a_1 \otimes \cdots \otimes a_k \otimes 1) = \psi(a_1, \ldots, a_k).
	\end{equation}
	For details see \citep[Prop.5.2.1]{weisphd}.
\end{proof}

We have a well-defined differential subcomplex $\HH^\bullet_\diff (\A, \M) $ in the case of a differential bimodule. But we also want to define a complex $\Hom^\diff_{\A^e}(X_k,\M)$, with which we can compute this differential Hochschild cohomology. 
For this we  set 
\begin{equation}
	\Hom^{\diff,L}_{\A^e} (X_k,\M) = (\Xi^k)^{-1} (\DO^L( \A,\M))
\end{equation}
Since $\Xi$ is a chain map we also get a well defined subcomplex $\Hom^\diff_{\A^e}(X_\bullet ,\M)$ of $\Hom^\cont_{\A^e}(X_\bullet ,\allowbreak \M)$. By  construction we get an isomorphism of complexes
\begin{equation}
	\Xi : (\Hom^\diff_{\A^e}(X_\bullet ,\M), \delta_X) \rightarrow (HC^\bullet_\diff(\A,\M),\delta)
\end{equation}

Since  $\M$ is a topological bimodule, we have that the map $(a,f,b) \mapsto a \bullet f \bullet b$ is continuous.
So by continuity we get an $\A^e$-module structure, for the completed tensor product, given by 
\begin{equation}
	(a \otimes b) \cdot f = a \cdot f \cdot b.
\end{equation}
This can also be written as 
\begin{equation}\label{diffmod}
	\hat a \bullet f = \sum_{|I|<l} (\Delta^*_0 \del_I \hat a) \cdot f^I 
\end{equation}
for all $\hat a \in \A^e$. Here $\Delta^*_k$ denotes the pull-back with the total diagonal map $\Delta_k : V \rightarrow V^{k+2}$ and the differentiation acts on the second argument of $\hat a$.

Using the local form of a differential operator it is also possible to get an explicitly form of the elements of $\Hom^{\diff,L}_{\A^e}(X_k,\M)$.

\begin{lemma}[{\citep[Lemma 5.3.6]{weisphd}}]\label{th:lochom} 
	An element $\psi \in \Hom^{\diff,L}_{\A^e}(X_k,\M)$ has the form 
	\begin{equation}
		\psi(\chi)= \sum_{|I_1|<l_1, \ldots,\abs{I_k}<l_k,\abs{J}<l} \left(\Delta^*_k \frac{\del^{\abs{I_1}+ \cdots \abs{I_k}} \chi}{\del q^{I_1}_1 \cdots \del q^{I_k}_k \del w^J} \right) \cdot \psi^{I_1 \cdots I_k J} 
	\end{equation}
	with multiindices $I_1,\ldots I_k, J \in \N_0^n$ and $\psi^{I_1 \cdots I_k J} \in \M$, and $l$ the order of the differential bimodule.
\end{lemma}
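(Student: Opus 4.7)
My plan is to verify the claimed local formula first on algebraic simple tensors $\chi = \hat a \otimes b_1 \otimes \cdots \otimes b_k$ (with $\hat a \in \A^e$ and $b_i \in \A$), which are dense in $X_k = \A^e \otimeshat \A^{\otimeshat k}$ in the projective topology, and then to extend by continuity to all of $X_k$.

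On a simple tensor I would combine three ingredients. First, the $\A^e$-linearity of $\psi$ gives $\psi(\hat a \otimes b_1 \otimes \cdots \otimes b_k) = \hat a \cdot (\Xi^k\psi)(b_1,\ldots,b_k)$. Second, by hypothesis $\Xi^k\psi \in \DO^L(\A,\ldots,\A;\M)$ is a multi-differential operator of multi-order $L=(l_1,\ldots,l_k)$, so its standard local form on the convex open $V \subset \R^n$ with coefficients in $\M$ reads $(\Xi^k\psi)(b_1,\ldots,b_k) = \sum_{|I_i|<l_i}(\del^{I_1}b_1\cdots \del^{I_k}b_k)\cdot m^{I_1\cdots I_k}$ for fixed $m^{I_1\cdots I_k}\in \M$. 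Third, the differential-bimodule hypothesis via \eqref{diffmod} yields $\hat a \cdot m = \sum_{|J|<l}(\Delta_0^*\del_J\hat a)\cdot m^J$ for any $m\in\M$. Setting $\psi^{I_1\cdots I_k J} := (m^{I_1\cdots I_k})^J \in \M$ and composing the three identities produces the asserted formula on simple tensors, once one notes the elementary identity $(\Delta_0^*\del_J\hat a)\cdot(\del^{I_1}b_1\cdots\del^{I_k}b_k) = \Delta_k^*\bigl(\del^{|I_1|+\cdots+|I_k|+|J|}\chi/(\del q_1^{I_1}\cdots\del q_k^{I_k}\del w^J)\bigr)$, which is immediate from unwinding the definitions of $\Delta_0$, $\Delta_k$ and the product form of $\chi$.

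To pass from simple tensors to an arbitrary $\chi \in X_k$, I would use a continuity/density argument. The right-hand side is a \emph{finite} sum (all multiindices bounded by $l$ and the $l_i$), and each summand is the composition of three continuous operations: partial differentiation in the $q_i, w$ variables on $\CCinf(V^{k+2})$ with its Fr\'echet topology, pullback along the diagonal embedding $\Delta_k:V\to V^{k+2}$, and left multiplication by the fixed $\psi^{I_1\cdots I_k J}\in\M$. Hence the right-hand side defines a continuous map $X_k\to \M$. Since it agrees with the continuous map $\psi$ on the dense algebraic subspace of simple tensors, the two maps agree on all of $X_k$.

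The main obstacle is essentially bookkeeping: correctly tracking how the right-action derivatives $\del_J$ from \eqref{diffmod} combine with the left-side differentiations $\del^{I_i}$ coming from the multi-DO expansion, so that they assemble into a single $\Delta_k^*$-pullback of a mixed partial derivative of $\chi$ with the expected multiindex bounds. Once this identification is in place, the continuity/density extension is routine.
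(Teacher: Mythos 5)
Your proof is correct and follows the expected route: on an algebraic simple tensor you combine $\A^e$-linearity of $\psi$, the local coefficient expansion of the multidifferential operator $\Xi^k\psi$, and the differential-bimodule formula~\eqref{diffmod} (using commutativity of $\A$ to move the scalar $\del^{I_1}b_1\cdots\del^{I_k}b_k$ past the $\hat a$-action), and then extend by density and the continuity of both sides. The only small imprecision is that it is the \emph{linear span} of simple tensors, not the simple tensors themselves, that is dense in $X_k$; since both sides are linear this does not affect the argument.
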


With this one can show that the constructed homotopies all respect the differential subcomplex in the following sense:

\begin{prop}[{\citep[Prop. 5.7.3]{weisphd}}]\label{th:kdiff}
	The pullbacks $(G^k)^*: \Hom_{\A^e}(K_k,\M) \rightarrow \Hom_{\A^e}^{\mathrm{diff},L}(X_k,\M)$ only take values in the differential cochains of multiorder $L=(l+1.\ldots,l+1)$  and
	\begin{equation}
		(\Theta^k)^* : \Hom_{\A^e}^{\mathrm{diff}}(X_k,\M) \rightarrow \Hom_{\A^e}^{\mathrm{diff},L}(X_k,\M),
	\end{equation}
	so elements of the differential Hochschild complex are mapped into such elements.
	Also for all $L \in \N^{k+1}_0$ we have
	\begin{equation}
		(s^k)^* : \Hom_{\A^e}^{\mathrm{diff},L}(X_{k+1},\M) \rightarrow \Hom_{\A^e}^{\mathrm{diff},\tilde L}(X_k,\M),
	\end{equation}
	where $\tilde l_i = (k-1)! + \abs{L} +l$.
\end{prop}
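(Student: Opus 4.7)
The plan is to unwind the explicit formulas for $G^k$, $\Theta^k$, and $s^k$ and, for each, to bound slot-by-slot the number of $q_j$-derivatives produced after composition with the differential bimodule action \eqref{diffmod}. The first two claims will reduce to a direct chain-rule count; the third will be the main obstacle.

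First I would handle $(G^k)^*$. Given $\phi \in \Hom_{\A^e}(K_k, \M)$, write $G^k \chi = \sum_I g^I_\chi(v,w)\, e^I$, where each coefficient $g^I_\chi$ is the iterated integral of one $q_j$-derivative of $\chi$ in each slot evaluated along the segment from $v$ to $w$. By $\A^e$-linearity, $\phi(G^k \chi) = \sum_I g^I_\chi \cdot \phi(e^I)$, and \eqref{diffmod} expands each summand as a finite sum over multi-indices $J$ with $|J| < l$ of contributions involving $\Delta^*_0 \del_J g^I_\chi$. Differentiating $g^I_\chi$ in $w$ passes via the chain rule through the arguments $t_j v + (1-t_j)w$ and produces precisely one extra $q_j$-derivative of $\chi$ in some slot $j$ (the final $w$-slot of $\chi$ gives no contribution on bar-complex elements $1 \otimes a_1 \otimes \cdots \otimes a_k \otimes 1$). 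Hence at most $l-1$ additional derivatives are distributed among the $k$ slots on top of the initial one per slot, so each slot carries order at most $l$; that is, $(G^k)^*\phi \in \Hom^{\diff, L}_{\A^e}(X_k, \M)$ with $L = (l+1, \ldots, l+1)$.

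Next I would deduce the $(\Theta^k)^*$ statement. Since $\Theta^k = F^k \circ G^k$, one has $(\Theta^k)^*\psi = (\psi \circ F^k) \circ G^k$. Because $F^k$ is continuous and $\A^e$-linear, $\psi \circ F^k$ lies in $\Hom^\cont_{\A^e}(K_k, \M)$, so the previous step applies to $\psi \circ F^k$ and yields the same multi-order $L = (l+1, \ldots, l+1)$ for $(\Theta^k)^*\psi$ with no additional work.

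The hard part is the $(s^k)^*$ statement, for which I would use the explicit homotopy $s^k: X_k \to X_{k+1}$ from \cref{th:sk}. That homotopy is built by splicing the (non-$\A^e$-linear) bar-complex contracting homotopy $h^k_X$ with the projections $\Theta^j$ and the chain maps $F^j, G^j$, producing an iterated integral carrying signed permutation sums. I would then track derivative orders slot by slot, accounting for (i) the $|L|$ derivatives already present in $\psi$, (ii) the additional derivatives introduced by the $G^j$-factors together with their symmetric-group combinatorics (which is where the $(k-1)!$ term originates), and (iii) the $l$ further derivatives coming from the differential bimodule action, exactly as in the argument for $(G^k)^*$. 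Summing these three contributions yields the claimed bound $\tilde l_i = (k-1)! + |L| + l$; the technical difficulty lies in executing this bookkeeping uniformly in $i$ and in $L$.
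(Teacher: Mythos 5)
Your argument for the first two parts is sound. For $(G^k)^*$ you correctly exploit the $\A^e$-linearity of $\phi$, the explicit integral formula for $G^k$ (one $q_j$-derivative of $\chi$ per slot), and the bimodule expansion \eqref{diffmod}: since $|J|<l$, at most $l-1$ extra derivatives are distributed among the $k$ slots via the chain rule through $t_jv+(1-t_j)w$, and on bar-complex generators the final $w$-argument is constant so contributes nothing; each slot therefore sees at most $l$ derivatives, which is multiorder $(l+1,\dots,l+1)$ in the convention $|I_j|<l_j$ of \cref{th:lochom}. The deduction for $(\Theta^k)^*$ from $\Theta^k=F^k\circ G^k$ and the $\A^e$-linearity of $F^k$ is clean. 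Note that the paper itself does not prove this proposition but cites \citep[Prop.~5.7.3]{weisphd}, so there is no in-paper proof to compare against directly.

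The gap is in the third claim, and it is substantial rather than cosmetic. You never write down the homotopy $s^k$, so there is nothing to estimate; ``track derivative orders slot by slot'' is a plan, not a proof. Worse, the one concrete attribution you do make --- that the $(k-1)!$ in $\tilde l_i=(k-1)!+|L|+l$ ``originates'' from the symmetric-group combinatorics in the $G^j$-factors --- does not hold up under scrutiny. The permutation sum in $\Theta^k$ has $k!$ terms and the simplex integration contributes a $1/k!$ normalization, but neither of these affects a \emph{derivative-order} count: derivative orders add under composition, they are not multiplied by the number of summands. The factorial growth almost certainly comes instead from the recursive construction of the homotopy (the contracting homotopy $h^k_X$ of the bar complex is not $\A^e$-linear, so $s^k$ is built recursively out of $h^j_X$, $\Theta^j$ and $s^{j-1}\del_X$, and the orders compound along the recursion depth), but establishing this requires the explicit formula from \citep{weisphd} and the careful bookkeeping you defer. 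As it stands, the key quantitative claim of the proposition --- and the only one with any real content beyond parts (1) and (2) --- is unproven, and the heuristic offered for it is misleading.
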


We want to compute explicitly the  map $\tilde G :\Hom(K,\M) \to \HC(A,M)$, which is induced by $G^*$, in the case of a symmetric bimodule.
We get 
\begin{equation} \label{eq:gtilde}
	\tilde G( \phi)(a_1,\dots, a_k) = \sum_{i_1,\dots i_k} (\del_{i_1} a_1)\dots(\del_{i_k} a_k) \phi(e^{i_1}\wedge \dots \wedge e^{i_k}).
\end{equation}

\begin{prop}\label{th:kohh}
	We have the following isomorphisms of complexes:
	\begin{equation}
		\HH^\bullet_\diff(\A,\M) \cong \H(\Hom^\diff_{A^e}(X_\bullet,\M)) \cong \H(\Hom_{\A^e}(K_\bullet,\M))
	\end{equation}
\end{prop}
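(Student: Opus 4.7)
The plan is to derive both isomorphisms directly from machinery already assembled, taking care only to verify that everything respects the differential subcomplex.

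The first isomorphism is essentially free: the map $\Xi$ from the previous section is already an isomorphism of complexes $\bigl(\Hom^\diff_{\A^e}(X_\bullet,\M),\delta_X\bigr) \to \bigl(\HC^\bullet_\diff(\A,\M),\delta\bigr)$. Passing to cohomology then gives $\HH^\bullet_\diff(\A,\M) \cong \H(\Hom^\diff_{\A^e}(X_\bullet,\M))$ with no further work.

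For the second isomorphism I would use the chain maps $F^\bullet$ and $G^\bullet$ from the Koszul to the bar complex. Dualize each of them to obtain $F^\ast$ sending $\Hom^\diff_{\A^e}(X_\bullet,\M)$ to $\Hom_{\A^e}(K_\bullet,\M)$ by $\psi \mapsto \psi\circ F$, and $G^\ast$ in the opposite direction. On the $K$ side there is nothing to check beyond $\A^e$-linearity, since each $K_k = \A^e \otimes \Lambda^k(\R^n)^\ast$ is a finitely generated free $\A^e$-module, so $F^\ast$ lands where it should automatically. On the $X$ side the point is more delicate: I invoke Proposition \ref{th:kdiff}, which shows that $G^\ast$ actually takes values in the differential subcomplex (at fixed multiorder $(l+1,\ldots,l+1)$), so $G^\ast$ is well-defined as a map into $\Hom^\diff_{\A^e}(X_\bullet,\M)$.

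Then $G \circ F = \id_K$ yields $F^\ast \circ G^\ast = \id$ on $\Hom_{\A^e}(K_\bullet,\M)$, while Lemma \ref{th:sk} provides a chain homotopy $s$ between $\Theta = F \circ G$ and $\id_{X_\bullet}$. Dualizing, $G^\ast \circ F^\ast = \Theta^\ast$ is cochain-homotopic to $\id$ via $s^\ast$. Here again Proposition \ref{th:kdiff} is essential: it guarantees that $s^\ast$ preserves differentiality (with a controlled increase of the multiorder), so the homotopy genuinely lives inside $\Hom^\diff_{\A^e}(X_\bullet,\M)$ and descends to cohomology. Consequently, on cohomology $F^\ast$ and $G^\ast$ are mutually inverse isomorphisms, yielding $\H(\Hom^\diff_{\A^e}(X_\bullet,\M)) \cong \H(\Hom_{\A^e}(K_\bullet,\M))$.

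The heart of the argument is the standard comparison of two projective resolutions, and in a purely algebraic setting one could simply invoke abstract homological algebra. The genuine obstacle, which forces us to write explicit formulas rather than cite a lemma, is the differentiality constraint: neither $X_\bullet$ nor the contracting homotopy of the bar resolution is $\A^e$-linear in the algebraic sense, and the differential subcomplex is not obtained by any standard functorial construction from $X_\bullet$. The concrete content of the proof therefore reduces to checking that the three maps $G^\ast$, $\Theta^\ast$, and $s^\ast$ all stay inside the differential complex with controlled orders, which is exactly what Proposition \ref{th:kdiff} delivers.
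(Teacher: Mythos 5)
Your proof is correct and follows exactly the intended argument: the paper states this proposition without an explicit proof, because it is designed to be read off from the chain iso $\Xi$ together with the maps $F^\ast$, $G^\ast$, $\Theta^\ast$, $s^\ast$, and it is precisely Proposition~\ref{th:kdiff} and Lemma~\ref{th:sk} that make the homotopy argument survive inside the differential subcomplex. You have also correctly identified the real content of the proposition: the bar resolution is not $\A^e$-free in the algebraic sense, so one cannot just quote the comparison theorem for projective resolutions, and the only thing to verify is that $G^\ast$, $\Theta^\ast$, and $s^\ast$ preserve differentiality, which is the whole point of Proposition~\ref{th:kdiff}.
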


\begin{remark} \label{th:homkoz}
	Since $K^k$ is free and finite dimensional as an $\A^e$-module for any $k\in \N$, we have that $\Hom_{\A^e}(K_k,\M) \cong K_k^* \otimes_{\A^e} \M  \cong (\A^e \otimes \Lambda^\bullet(\R^n)) \otimes_{\A^e} \M \cong \Lambda^\bullet(\R^n) \otimes \M$, for any module $\M$, where $K_k^*$ is the dual of $K_k$ as $\A^e$-module.
\end{remark}

The differential on $\Hom(K_k,M)$ can then  be written as 
\begin{equation}
	\delta_K (\phi \otimes f) = \xi_i e^i \wedge \phi \otimes f
\end{equation}
for $\phi \otimes f \in \Lambda^\bullet(\R^n) \otimes \M$.

Since the Koszul complex is finite and every $K_k$ is also a finite dimensional module 
it is much smaller than the bar complex. So it is easier to handle, but still big enough to compute the desired Hochschild cohomology.
For defining the Koszul complex one needs to use the completion of the tensor product in $\A^e$ because otherwise it is not possible to define for example the homotopy.

\subsection{Generalisation of the HKR theorem}

The aim of this section is to prove a generalization of the HKR theorem.
We start with the simple case that the considered manifolds are $\R^n$.

\begin{theorem}\label{th:hkrl}
	Consider an arbitrary smooth map $\R^n \xrightarrow{p} \R^m$ between $\R^n$ and $\R^m$.
	Then 
	\begin{equation}
		\HH^\bullet_\diff(\CCinf(\R^m),\CCinf(\R^n)) = \Lambda^\bullet(\R^{m}) \otimes \CCinf(\R^n) 
	\end{equation}
	as $\CCinf(\R^m)$-bimodules.
\end{theorem}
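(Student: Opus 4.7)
The plan is to invoke the resolution machinery from the previous section and exploit symmetry of the bimodule: the latter makes the Koszul differential vanish identically, so the cohomology reduces to the cochain complex itself.

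First I would apply \cref{th:kohh} with $\A = \CCinf(\R^m)$ and $\M = \CCinf(\R^n)$, the bimodule structure on $\M$ being the one induced by pullback along $p$. Since $\R^m$ is convex, the Koszul complex $K_\bullet$ over $\A$ is available and
\begin{equation*}
\HH^\bullet_\diff(\CCinf(\R^m),\CCinf(\R^n)) \cong \H(\Hom_{\A^e}(K_\bullet, \CCinf(\R^n))).
\end{equation*}
By \cref{th:homkoz} the cochain spaces identify as $\Hom_{\A^e}(K_k, \CCinf(\R^n)) \cong \Lambda^k(\R^m) \otimes \CCinf(\R^n)$, with the natural $\CCinf(\R^m)$-bimodule structure inherited from the second factor.

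The crucial observation is that $\CCinf(\R^n)$ is a \emph{symmetric} bimodule: since $\CCinf(\R^n)$ is commutative, the left and right actions satisfy $a \cdot f = (p^* a) f = f (p^* a) = f \cdot a$ for all $a \in \CCinf(\R^m)$ and $f \in \CCinf(\R^n)$. Consequently each $\xi_i = x_i \otimes 1 - 1 \otimes x_i \in \A^e$ annihilates $\M$, so the Koszul differential
\begin{equation*}
\delta_K(\phi \otimes f) = \sum_i (e^i \wedge \phi) \otimes (\xi_i \cdot f)
\end{equation*}
vanishes identically under the identification above.

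With a zero differential every cochain is closed and no coboundaries exist, hence the cohomology coincides with the cochain complex $\Lambda^\bullet(\R^m) \otimes \CCinf(\R^n)$ as a $\CCinf(\R^m)$-bimodule. There is no serious obstacle here: convexity of $\R^m$ supplies the Koszul resolution, symmetry of the bimodule kills the differential, and the bimodule structure is transparently preserved throughout the identifications of \cref{th:kohh} and \cref{th:homkoz}. The only things genuinely to check are that the bimodule action on $\Lambda^\bullet(\R^m) \otimes \CCinf(\R^n)$ coming through these identifications really is action on the second factor, and that the $\xi_i$-action computation goes through on the nose — both of which are immediate from the definitions.
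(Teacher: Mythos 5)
Your proof is correct and follows the same route as the paper: pass to the Koszul complex via \cref{th:kohh}, identify the cochain spaces with $\Lambda^\bullet(\R^m) \otimes \CCinf(\R^n)$ via \cref{th:homkoz}, and observe that the Koszul differential vanishes because $\CCinf(\R^n)$ is a symmetric $\CCinf(\R^m)$-bimodule, so each $\xi_i = x_i \otimes 1 - 1 \otimes x_i$ acts as zero. The only cosmetic difference is that you phrase the vanishing in terms of $\xi_i$ annihilating $\M$ while the paper writes it out elementwise as $\pr^* x^i f - f \pr^* x^i = 0$.
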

\begin{proof}
	Using \cref{th:kohh} we compute the cohomology of the corresponding Koszul complex.
	We consider an element $ e^I \otimes f \in \Hom(K_k,\CCinf(\R^n))$, where $I$ is a  multiindex. Since $K_k$ is  a free $\A^e$-module this is a generating set.
	We have 
	\begin{equation}
		\del_K ( e^I \otimes f )=  \xi_i e^i \wedge  e^I \otimes f = e^i \wedge e^I \otimes (\pr^* x^i f - f  \pr^* x^i) =0,
	\end{equation}
	using the fact that the tensor product is $\A^e$-linear and the fact that the multiplication in 
	$\CCinf(\R^n)$ is commutative.
	So the differential is trivial and \cref{th:homkoz} gives us the desired result.
\end{proof}

\begin{remark}
	Note that we only needed the fact that $\CCinf(\R^n)$ is a symmetric bimodule, so for any symmetric module $\M$ we get
	\begin{equation}
		\HH^\bullet(\CCinf(\R^m),\M) \cong \Lambda^\bullet(\R^{m}) \otimes \M.
	\end{equation} 
\end{remark}

Next we want to consider the trivial situation for the Hochschild cohomology of the  differential operators.

\begin{theorem}\label{th:hkrld}
	Let $\R^n \xrightarrow{p} \R^m$ be the projection on the first $k$ coordinates.
	Then 
	\begin{equation}
		\HH^\bullet_\diff(\CCinf(\R^m),\DO(\R^n)) \cong \Lambda^\bullet(\R^{m-k}) \otimes \DOver(\R^n)
	\end{equation}
	as $\CCinf(\R^m)$-modules.
\end{theorem}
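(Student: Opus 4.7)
The plan is to pass from the Hochschild complex to the Koszul complex via \cref{th:kohh} and \cref{th:homkoz}, working with $\Hom_{\A^e}(K_\bullet, \DO(\R^n)) \cong \Lambda^\bullet(\R^m) \otimes \DO(\R^n)$ (where $\A = \CCinf(\R^m)$). The induced differential reads
\begin{equation*}
\delta_K(\omega \otimes D) = \sum_{i=1}^{m} e^i \wedge \omega \otimes \bigl(\pr^* x_i \cdot D - D \cdot \pr^* x_i\bigr),
\end{equation*}
where the bracketed expression is the bimodule commutator, i.e.\ the commutator of multiplication by $\pr^* x_i$ with the operator $D$ on $\CCinf(\R^n)$. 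Since $\pr^* x_i = 0$ for $i > k$ and $\pr^* x_i = x_i$ for $i \le k$, only the indices $i = 1,\dots, k$ contribute nontrivially. Splitting $\R^m = \R^k \oplus \R^{m-k}$ accordingly gives $\Lambda^\bullet(\R^m) = \Lambda^\bullet(\R^k) \otimes \Lambda^\bullet(\R^{m-k})$, and since $\delta_K$ acts trivially on the second tensor factor, Künneth factorises the complex as $\bigl(\Lambda^\bullet(\R^k) \otimes \DO(\R^n), \delta'\bigr) \otimes \Lambda^\bullet(\R^{m-k})$.

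It then suffices to show that the first factor has cohomology $\DOver(\R^n)$ concentrated in degree zero. To that end I would decompose $\DO(\R^n) \cong \DOver(\R^n) \otimes \R[y_1, \dots, y_k]$ by writing each operator in normal form with vertical derivatives on the left and abbreviating $y_i := \del_i$. A short calculation using $[\del_j, x_i] = \delta_{ij}$ shows that the bimodule commutator with $x_i$ vanishes for $D \in \DOver(\R^n)$ (since $x_i$ commutes with smooth functions and with $\del_{k+1}, \dots, \del_n$) and acts as $-\del/\del y_i$ on the polynomial factor. Hence $\Lambda^\bullet(\R^k) \otimes \DO(\R^n) \cong \DOver(\R^n) \otimes \bigl(\Lambda^\bullet(\R^k) \otimes \R[y_1, \dots, y_k]\bigr)$ with differential $-\sum_{i=1}^k e^i \wedge \del/\del y_i$, which up to sign is the polynomial de Rham complex of $\R^k$. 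By the algebraic Poincaré lemma this has cohomology $\R$ in degree zero and vanishes in higher degrees.

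Combining the two tensor factors gives $\HH^q_\diff(\CCinf(\R^m), \DO(\R^n)) \cong \Lambda^q(\R^{m-k}) \otimes \DOver(\R^n)$, and this is even an isomorphism of $\CCinf(\R^m)$-bimodules: the left and right actions on $\DOver(\R^n)$ coincide because each $\pr^* a$ depends only on $x_1, \dots, x_k$ while vertical operators only involve $\del_{k+1}, \dots, \del_n$. I expect the main obstacle to be the careful verification that the normal-form decomposition $\DO(\R^n) \cong \DOver(\R^n) \otimes \R[y_1, \dots, y_k]$ intertwines the bimodule commutator with $-\del/\del y_i$ (and respects the $\CCinf(\R^m)$-bimodule structure); once this identification is in place, the remainder is a direct application of the Koszul machinery set up in the preceding subsections together with the standard Poincaré lemma for polynomial forms.
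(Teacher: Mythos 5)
Your proposal is correct and follows essentially the same route as the paper: pass to the Koszul complex, observe that only $i \le k$ contribute to $\delta_K$, identify the resulting complex (after factoring out $\Lambda^\bullet(\R^{m-k})$ and the vertical part $\DOver(\R^n)$) with the polynomial de Rham complex on $\R^k$, and conclude by the algebraic Poincaré lemma. Your write-up is somewhat more explicit than the paper's — you spell out the Künneth factorisation, the vector-space splitting $\DO(\R^n) \cong \DOver(\R^n) \otimes \R[y_1,\dots,y_k]$, and the check that the $\CCinf(\R^m)$-bimodule structure on $\DOver(\R^n)$ is symmetric — but the underlying argument is the same.
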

\begin{proof}
	This proof is similar to the proof of \cref{th:hkrl}. 
	Considering elements of the form $e^I \otimes f y^J   \in \Hom(K_k,\DO(\R^n))$, where $y^I$ is a symbol, which we identify with the corresponding differential operator, $f \in \CCinf(\R^n)$ and $I$, $J$ are multiindices. We also assume that $y^I$ acts on everything to the right. Again elements of this form generate the whole  of $\Hom(K_k,\DO(\R^n))$. 
	We have
	\begin{align}
		\del_K (e^I \otimes f y^J ) & = \sum_{i=1}^n  \xi_i e^i \wedge e^I \otimes  f y^j                        \\
		                            & =\sum_{i=1}^n  e^i \wedge e^I \otimes  f ( \pr^* x^i y^J -  y^J \pr^* x^i) \\
		                            & = \sum_{i=1}^k  e^i \wedge e^I \otimes f [ x^i, y^J ]                      \\ 
		                            & = - \sum_{i=1}^k  e^i \wedge e^I \otimes f \del_{y^i} y^J                  
	\end{align}
	using $\pr^* x^i =0$ for $i >k$ and $[x^i,y^J] = \del_{y^i} y^I$.
	In this case $\DOver$ are those differential operators, whose symbols only contain $y^i$ with $i>k$.
				
	For $\del_K  ( e^I \otimes f y^J ) =0$ we need that $\del_K  e^{I'} \otimes  y^{J'}  =0$ where $I'\in \N^k$ consists of  the first $k$ entries of $I$ and similarly for $J$.  This can be considered as the de-Rahm differential on $\R^k$ for polynomial functions. The cohomology of this is known  to  be trivial except  in degree 0, where it is $\C$ and the non trivial element is 1. Since the differential is trivial on the other part, we get the result.
\end{proof}

Now we want to use this result for $\R^n$ and generalize it for the situation of an arbitrary smooth map $\pr: M \rightarrow N$ between two manifolds. To be able to localize things we need the assumption that $\pr(N)$ is a submanifold of $M$. 

\begin{remark}
	Since $\pr(N)$ is a submanifold of $M$, we can assuming that $\pr$ has constant rank, since this is true for every connected component. With the constant rank theorem we get adapted charts. This means for every point $p \in P$ there are  open sets $ p \in V \subset P$  and $ \pr(p) \in U \subset M$, with $\pr(V) =U$, and diffeomorpism $V  \rightarrow \tilde V \subset \R^{n}$ and $U \rightarrow \tilde U \subset \R^m$ such that in this charts $\pr$ is the projection on the first $k = \rang(\pr)$ components. Furthermore we can assume that $\tilde U$ and $\tilde V$ are convex.
\end{remark}

\begin{lemma}\label{th:emcm}
	The restrictions and charts shown in the following diagram are chain maps
				
	\begin{tikzpicture}
		\node(a) {$\HC^\bullet_\diff(M,\DO(P))$};
		\node(b) [below=of a] {$\HC^\bullet_\diff (U,\DO(\pr^{-1}(U))$};
		\node(c) [below=of b] {$\HC^\bullet_\diff(U,\DO(V))$};
		\node(d) [right=of c] {$\HC^\bullet_\diff(\tilde U,\DO(\tilde V))$};
		\path[->] (a) edge   (b)
		(b) edge (c) 
		(c) edge  node[above] {$\cong$} (d);
	\end{tikzpicture}
\end{lemma}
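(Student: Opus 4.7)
The plan is to verify each of the three arrows in the diagram separately; once each is shown to be a well-defined map of complexes, their composition is immediate.

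First I would handle the two restriction arrows. The key input is Lemma~\ref{th:lochom}: any $\psi \in \HC^k_\diff(\CCinf(M),\DO(P))$ is multidifferential in its $\CCinf(M)$-arguments, so at a point $q \in \pr^{-1}(U)$ the value $\psi(a_1,\ldots,a_k)$ depends only on finite jets of the $a_i$ at $\pr(q) \in U$. Given $a_1,\ldots,a_k \in \CCinf(U)$, one chooses local bump-function extensions $\tilde a_i \in \CCinf(M)$ and sets $(\psi|_U)(a_1,\ldots,a_k) := \psi(\tilde a_1,\ldots,\tilde a_k)\big|_{\pr^{-1}(U)}$. Locality makes this independent of the choice of extension, and the explicit coefficient form of Lemma~\ref{th:lochom} shows the result is again a differential cochain of the same multiorder, with values in $\DO(\pr^{-1}(U))$. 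The second arrow from $\HC^\bullet_\diff(U,\DO(\pr^{-1}(U)))$ to $\HC^\bullet_\diff(U,\DO(V))$ is simpler, as $V \subseteq \pr^{-1}(U)$ is open and restricting differential operators to open subsets preserves their order.

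For the bottom arrow, the chart diffeomorphisms $\phi\colon U \xrightarrow{\cong} \tilde U$ and $\phi'\colon V \xrightarrow{\cong} \tilde V$ intertwining $\pr$ with the coordinate projection induce an algebra isomorphism $\phi^*\colon \CCinf(\tilde U) \to \CCinf(U)$ and a pushforward $(\phi')_*\colon \DO(V) \to \DO(\tilde V)$, and $\psi$ transports to $(\phi')_* \circ \psi \circ (\phi^* \otimes \cdots \otimes \phi^*)$. Since diffeomorphisms carry differential operators to differential operators of the same order, this is an isomorphism of differential cochain spaces.

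To see that each map is a chain map, one observes that the Hochschild coboundary $\delta$ is built from three ingredients: pointwise products of adjacent arguments, the left action $a \cdot D = \pr^*a \circ D$, and the right action $D \cdot a = D \circ \pr^*a$. Restriction to open subsets and pullback along the chart diffeomorphisms both commute with pointwise multiplication and, using $\pr(V) = U$, also with $\pr^*$; hence they intertwine $\delta$ with itself on source and target.

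The main obstacle I expect is checking the well-definedness of the first restriction, i.e.\ that two different local extensions $\tilde a_i$ and $\hat a_i$ of the same $a_i \in \CCinf(U)$ yield the same differential operator on $\pr^{-1}(U)$. This reduces via the coefficient expression of Lemma~\ref{th:lochom} to the fact that the relevant derivatives $\Delta_k^* \partial^I(\tilde a_i - \hat a_i)$ vanish on the diagonal in $U^{k+1}$ because $\tilde a_i$ and $\hat a_i$ agree with $a_i$ on $U$; the remainder of the argument is essentially formal.
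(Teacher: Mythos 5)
Your proof is correct and rests on exactly the same principle as the paper's, which dispatches the lemma in one sentence by invoking locality of all the operators involved; you have simply unwound that sentence into the explicit restriction-via-extension construction, the chart transport, and the check that $\delta$ commutes with these maps because its three ingredients (adjacent products, left and right $\pr^*$-actions) are themselves local and compatible with pullback.
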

\begin{proof}
	This follows from the fact that all involved operators are local. 
\end{proof}

Now we get the the fist of the two main results of this part of this paper, which gives a generalization of the HKR theorem.  A similar statement using the same concepts for the proof is given in \citep{bordemann}.

To simplify the notation we will sometimes write $\HC^\bullet(M,\cdot)$ instead of $\HC^\bullet(\CCinf(M), \cdot)$.

\begin{theorem}\label{th:hkr}
	Let $N \xrightarrow{\pr} M$ be such that $ \pr(N)$ is a closed submanifold of $M$ then
	\begin{equation}
		\HH^\bullet(\CCinf(M),\CCinf(N)) = \X^\bullet(M)|_{\pr(N)} \otimes_{\CCinf(M)} \CCinf(N)
	\end{equation}
	as $\CCinf(M)$-bimodule.
\end{theorem}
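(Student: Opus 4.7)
The strategy is to reduce to the already-established local case (Theorem \ref{th:hkrl}) and then globalize by exploiting that differential Hochschild cochains form a sheaf on which partitions of unity act, so that the local HKR representatives can be glued intrinsically.

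First, I would write down the candidate HKR map
\begin{equation*}
	\Phi: \X^\bullet(M)|_{\pr(N)} \otimes_{\CCinf(M)} \CCinf(N) \longrightarrow \HH^\bullet_\diff(\CCinf(M),\CCinf(N))
\end{equation*}
sending $X \otimes f$, where $X \in \X^k(M)$ is any extension of the restricted multivector field and $f \in \CCinf(N)$, to the differential cochain $(a_1,\dots,a_k) \mapsto f \cdot \pr^*\bigl(X(a_1,\dots,a_k)\bigr)$. This is manifestly balanced over $\CCinf(M)$ acting on $\CCinf(N)$ via $\pr^*$, depends only on the restriction of $X$ to $\pr(N)$ since $\pr^*$ factors through $\CCinf(\pr(N))$, and is a Hochschild cocycle by antisymmetry of $X$ together with commutativity of $\CCinf(M)$. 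This is precisely the global analogue of the map $\tilde G$ of \eqref{eq:gtilde}.

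Second, I would verify that $\Phi$ is an isomorphism locally. Using the adapted charts from the remark preceding Lemma \ref{th:emcm}, around every point of $N$ there exist convex open $\tilde U \subset \R^m$, $\tilde V \subset \R^n$ in which $\pr$ becomes the projection onto the first $k = \rang(\pr)$ coordinates. Theorem \ref{th:hkrl} then gives $\HH^\bullet_\diff(\CCinf(\tilde U),\CCinf(\tilde V)) \cong \Lambda^\bullet(\R^m) \otimes \CCinf(\tilde V)$. On the right-hand side, the local slice $\pr(\tilde V) \subset \tilde U$ is cut out by the ideal of functions vanishing in the last $m-k$ coordinates, and this ideal is annihilated by $\pr^*$, so $\X^\bullet(\tilde U)|_{\pr(\tilde V)} \otimes_{\CCinf(\tilde U)} \CCinf(\tilde V) \cong \Lambda^\bullet(\R^m) \otimes \CCinf(\tilde V)$ after trivializing the tangent bundle. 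Under these identifications $\Phi$ coincides with the Koszul-HKR chain map, proving the local bijection.

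Third, for the global step I would choose a locally finite cover of $\pr(N)$ (and of a tubular neighbourhood in $M$) by adapted charts and a subordinate partition of unity on $M$. Given a global cocycle $\phi$, I use the partition of unity to split $\phi$ into locally-supported cocycles, replace each by its HKR representative coming from step two, and then observe that on overlaps the representatives differ by coboundaries that can be trivialized uniformly using the explicit homotopies $F$, $G$, $\Theta$ and $s$ from Lemma \ref{th:sk} and Proposition \ref{th:kdiff}. Since the local HKR form transforms by the natural multivector-field transformation law, the glued object descends to a well-defined element of $\X^\bullet(M)|_{\pr(N)} \otimes_{\CCinf(M)} \CCinf(N)$, establishing bijectivity of the induced map on cohomology.

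The main obstacle will be making the gluing rigorous: I have to show that the chart-by-chart HKR representatives patch into a single global class and that the resulting assignment is independent of all choices. The cleanest way is to view the differential cochain complex as a sheaf of fine $\CCinf(M)$-modules (fineness coming from the existence of smooth partitions of unity on $N$), whence its higher \v Cech cohomology vanishes and the global Hochschild cohomology is just the space of global sections of the cohomology presheaf computed in Theorem \ref{th:hkrl}. Verifying fineness and compatibility of $\Phi$ with restriction morphisms is the technical heart of the argument; everything else is bookkeeping.
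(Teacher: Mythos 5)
Your overall strategy — localize to adapted convex charts, apply the Koszul computation of Theorem \ref{th:hkrl} there, and then glue with a partition of unity — is the same as the paper's. The main difference is in the globalization step, and here the paper's argument is simpler and avoids a potential gap in yours. You propose a Čech-style gluing: split $\phi$ into locally supported pieces, choose HKR representatives chart by chart, and then trivialize discrepancies on overlaps using the homotopies $F$, $G$, $\Theta$, $s$, packaged as a fineness/acyclicity statement. But the paper never compares representatives on overlaps at all. The key observation is that the bimodule $\CCinf(N)$ is symmetric, so multiplication by any $\psi \in \CCinf(N)$ commutes with the Hochschild differential: $\delta(\psi\,\theta) = \psi\,\delta\theta$. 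Hence one restricts $\phi$ to each chart, writes $\phi|_{V_\alpha} = \sigma_\alpha + \delta\theta_\alpha$ by the local theorem, multiplies both $\sigma_\alpha$ and $\theta_\alpha$ by the partition function $\psi_\alpha$ (supported in $V_\alpha$, so these extend by zero to global cochains), and sums. The identity $\sum_\alpha \psi_\alpha(\sigma_\alpha + \delta\theta_\alpha) = \sum_\alpha \psi_\alpha \phi = \phi$ gives the global decomposition $\phi = \sigma + \delta\theta$ directly, with no overlap bookkeeping.

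Two points to watch in your version. First, the fineness you invoke comes from multiplying by functions on $N$ (where the module lives), not from a $\CCinf(M)$-module structure on the cochain sheaf over $M$; the way you phrase it conflates the two. Second, the assertion that the cohomology presheaf is a sheaf whose global sections compute $\HH^\bullet$ is exactly what requires proof; asserting fineness of the cochain complex does give you a soft-resolution argument, but it is more machinery than needed. The direct partition-of-unity argument that exploits $\delta(\psi\,\theta) = \psi\,\delta\theta$ sidesteps all of it. Your explicit description of the HKR map $\Phi$ and the check that it is balanced and lands in cocycles is a useful addition that the paper leaves implicit (it is essentially the map $\tilde G$ of \eqref{eq:gtilde}).
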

\begin{proof}
	First we check that if we take $M,N$ and $\pr$ as in  \cref{th:hkrl} we get the same statement as there.
	Since in this case we have global charts, we have $\X^\bullet(M) \cong \CCinf(M) \otimes \Lambda^\bullet(\R^m).$ So we get $\X^\bullet(M)|_{\pr(N)} \otimes_{\CCinf(M)} \CCinf(N) \cong (\CCinf(\pr(N)) \otimes \Lambda^\bullet(\R^m)) \otimes_{\CCinf(M)} \CCinf(N) \cong \Lambda^\bullet(\R^m) \otimes \CCinf(N)$. The last isomorphism holds since $\CCinf(\pr(N))$ is a subalgebra of $\CCinf(M)$, since $\pr(N)$ is closed, so any function on $\pr(N)$ can be extended to a function on $M$.
				
	The idea is to localize things such that \cref{th:hkrl} can be applied, and then glue them together again.
	Since we consider the differential Hochschild cohomology it is enough to consider an open neighborhood of $\pr(N)$ in $M$. So given an atlas  $\{U_\alpha\}$ of submanifold charts of $\pr(N)$ we can assume  w.l.o.g. that $\bigcup_\alpha U_\alpha =M$, since $M \setminus \pr(N)$ is open we can take this is a submanifold chart and this to get a global atlas of $M$. We also consider a locally finite partition of unity $\chi_\alpha$ subordinate to $\{U_\alpha \}$  and an atlas $\{V_\alpha \}$ of  $N$, with partition of unity $\psi_\alpha$.
	These are adapted in the sense that $\pr (V_\alpha) = U_\alpha |_{\pr(N)}$
				
	Now consider a  $\phi \in \HC^l_\diff(M,\CCinf(N))$ which is closed.
	With \cref{th:emcm}  the restrictions $\phi_{V_\alpha} \in \HC^l_\diff(U_\alpha,\CCinf(V_\alpha))$ are closed. With the first part of the proof there exists $\sigma_\alpha \in \Lambda^\bullet(\R^m) \otimes \CCinf(N)$ and $\theta_\alpha \in \HC^{l-1}(U_\alpha,\CCinf(V_\alpha))$ with $\phi_{V_\alpha} = \sigma_\alpha + \delta \theta_\alpha$.  
	The restrictions
	\begin{equation}
		\tilde \theta_\alpha (a_1, \ldots , a_k)|_{V_\alpha}  = \psi_\alpha \theta_\alpha(a_1|_{U_\alpha}, \ldots a_k|_{U_\alpha})
	\end{equation}
	and $0$ elsewhere, define global elements $\theta_\alpha$, and similarly one can define global elements $\sigma_\alpha$. Clearly we have $\delta \tilde \theta_\alpha + \tilde \sigma_\alpha = \psi_\alpha  (\delta \theta + \sigma)$, 
	and, since $\psi_\alpha$ is locally finite, we get that $\theta = \sum_\alpha \tilde \theta_\alpha$ and 
	$\sigma= \sum_\alpha \tilde \sigma_\alpha$ are well-defined differential operators, and we also get
	\begin{equation}
		\phi = \sum_\alpha \psi_\alpha \phi =  \sum_\alpha ( \tilde \sigma_\alpha  + \delta \tilde \theta_\alpha)
		= \sigma +  \delta \theta 
	\end{equation}
	This gives the desired result.
\end{proof}

\begin{remark}
	The isomorphism in the previous theorem is given by the pullback of $\Theta$ since the differential in the Kozsul complex is trivial. From \cref{th:kdiff} it also follows that the image of $\Theta^*$ are exactly the multivector fields, because the module is symmetric, so it is a differential bimodule of order $l=0$. So the pullback maps into the totally antisymmetric multidifferential operators of order one in each argument.
\end{remark}

\begin{prop}\label{th:antisym}
	The map which assigns to every cocycle its cohomology class is given by the total antisymmetrization.
\end{prop}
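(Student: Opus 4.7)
The plan is to unpack the isomorphism of \cref{th:hkr}, which is given by the pullback $\Theta^* = (F \circ G)^*$. By the preceding remark, for a cocycle $\psi$ the image $\Theta^*\psi$ is already a totally antisymmetric multidifferential operator of order one in each argument, i.e., a multivector field, and it represents the cohomology class $[\psi]$. It therefore suffices to compute $\Theta^*\psi$ explicitly and to identify it with the total antisymmetrization of $\psi$ (up to the conventional factorial $k!$).

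The computation is local on a convex chart $V \subset \R^n$, and without loss of generality I take $\psi$ to be normalized, since every class has a normalized representative. Writing $\Theta^* = \tilde G \circ (F^k)^*$, I first apply $F^k$ to a generator $e^{i_1} \wedge \cdots \wedge e^{i_k}$ of $K_k$:
\begin{equation*}
	F^k(e^{i_1} \wedge \cdots \wedge e^{i_k})(v,q_1,\ldots,q_k,w) = \sum_{\sigma \in S_k} \sign(\sigma) \prod_{m=1}^k (q_{\sigma(m)} - v)^{i_m}.
\end{equation*}
Expanding $(q_j-v)^i = q_j^i - v^i$, the $v^i$-factors pull out to the left via $\A^e$-linearity as $\pr^*(x^i)$, while normalization kills every term in which some slot of $\tilde\psi$ becomes constant. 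Only the purely $q$-part survives, and one obtains
\begin{equation*}
	(F^k)^*\psi(e^{i_1} \wedge \cdots \wedge e^{i_k}) = \sum_{\sigma \in S_k} \sign(\sigma)\, \tilde\psi(x^{i_{\sigma^{-1}(1)}},\ldots,x^{i_{\sigma^{-1}(k)}}).
\end{equation*}

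Substituting into formula \eqref{eq:gtilde} for $\tilde G$ and using the fact that $\tilde\psi$ evaluated on coordinate functions extracts precisely the order-one-in-each-argument coefficients $\psi^{(j_1),\ldots,(j_k)}$, a re-indexing of the permutation sum yields
\begin{equation*}
	\Theta^*\psi(a_1,\ldots,a_k) = \sum_{\sigma \in S_k} \sign(\sigma)\, \psi(a_{\sigma^{-1}(1)},\ldots,a_{\sigma^{-1}(k)}),
\end{equation*}
which is $k!$ times the total antisymmetrization of $\psi$. The higher-order-in-each-argument parts of $\psi$ drop out of the right-hand side because they vanish on coordinate functions, in keeping with the fact that they must lie in the image of $\delta$ at the cohomological level.

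The main obstacle is the combinatorial bookkeeping of $\sign(\sigma)$ through the two index re-labelings -- one produced by the expansion of $F^k$, one needed when matching the Koszul expression \eqref{eq:gtilde} back to a multilinear form on $\CCinf(M)^k$. A secondary subtlety is justifying that the reduction to the first-order-in-each-argument part of $\psi$ is harmless in cohomology; this is automatic once one observes that on $\Hom(K_\bullet,\CCinf(N))$ the Koszul differential already vanishes, so any non-first-order excess in a cocycle $\psi$ has no effect on the class $[\psi]$.
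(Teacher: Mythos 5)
Your proof takes a genuinely different, computational route: you explicitly evaluate the projection $\Theta^*$ on a normalized cocycle using the local Koszul description. The paper's own proof is much shorter and purely abstract: writing any cocycle $\phi$ as $\tilde G\eta + \delta\psi$ (possible since $\tilde G$ induces an isomorphism on cohomology), it observes that $\Alt$ fixes the image of $\tilde G$ (already totally antisymmetric first-order) and annihilates coboundaries, since $\Alt\delta = 0$ for a commutative algebra acting symmetrically; so $\Alt\phi = \tilde G\eta$ immediately.

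Your computation of $(F^k)^*\psi$ is correct, including the use of $\A^e$-linearity to extract the $v$-factors and normalization to kill the mixed terms. However, there is a gap in the final step. Because $\tilde\psi$ is evaluated only on coordinate functions, $\Theta^*\psi$ sees only the first-order-in-each-argument part $\psi_1$ of $\psi$; what your formula actually gives is $\Theta^*\psi = \sum_{\sigma}\sign(\sigma)\,\psi_1(a_{\sigma^{-1}(1)},\ldots,a_{\sigma^{-1}(k)})$, i.e.\ $k!$ times the antisymmetrization of $\psi_1$, not of $\psi$. To conclude the proposition you still need $\Alt\psi = \Alt\psi_1$, i.e.\ that the higher-order pieces of a \emph{cocycle} antisymmetrize to zero. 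Your closing remark — that the Koszul differential vanishes so the higher-order excess has no effect on $[\psi]$ — shows that $\Theta^*\psi$ is independent of those pieces, which is true but not what is needed: the proposition asserts something about $\Alt\psi$ itself, not about $[\psi]$. The missing ingredient is precisely $\Alt\delta = 0$, which is the one-line lemma the paper's proof is built on. In other words, the explicit calculation buys you a concrete formula for the canonical representative but does not eliminate the need for that abstract fact, and without invoking it the argument as written does not close.
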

\begin{proof}
	The map $\tilde G$ in \cref{eq:gtilde} is an isomorphism on cohomology. So let $\phi \in \HC(M,\M)$ be a cocycle. Then there exists an $\eta \in \Hom(K,\M)$ such that $[\phi] = [\tilde G \eta]$. So we have  $\phi = \tilde G \eta + \del \psi$ for a $\psi \in \HC(M,\M)$.  With this we get $\Alt(\phi) = \Alt(\tilde G \eta )  + \Alt(\del \psi) = \tilde G \eta$, since $\tilde G\eta$ is antisymmetric and $\Alt \delta =0$, since the algebra is commutative. 
\end{proof}

From this we easily get the classical HKR theorem.
\begin{corollary}
	For a manifold $M$ we have
	\begin{equation}
		\HH^\bullet_\diff(\CCinf(M)) \cong \X^\bullet(M)
	\end{equation}
\end{corollary}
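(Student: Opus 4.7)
The plan is simply to apply Theorem~\ref{th:hkr} in the special case $N = M$ with $\pr = \id_M$. The notation $\HH^\bullet_\diff(\CCinf(M))$ is understood as the Hochschild cohomology $\HH^\bullet_\diff(\CCinf(M), \CCinf(M))$ with coefficients in the algebra itself, regarded as a symmetric bimodule via the usual pointwise multiplication. Under $\pr = \id_M$ the pullback $\pr^* = \id$, so this bimodule structure is exactly the one featured in Theorem~\ref{th:hkr}.

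First I would verify the hypothesis: $\pr(N) = \id_M(M) = M$, which is trivially a closed submanifold of $M$. So the theorem applies and gives
\begin{equation*}
\HH^\bullet_\diff(\CCinf(M), \CCinf(M)) \cong \X^\bullet(M)|_M \otimes_{\CCinf(M)} \CCinf(M).
\end{equation*}

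Next I would simplify the right-hand side. Restriction to all of $M$ is the identity, so $\X^\bullet(M)|_M = \X^\bullet(M)$. Moreover, for any $\CCinf(M)$-module $V$, one has the canonical isomorphism $V \otimes_{\CCinf(M)} \CCinf(M) \cong V$, $v \otimes f \mapsto f \cdot v$. Applying this with $V = \X^\bullet(M)$ yields the desired identification $\X^\bullet(M) \otimes_{\CCinf(M)} \CCinf(M) \cong \X^\bullet(M)$.

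Since the proof is a direct specialization of the preceding generalization, there is no real obstacle. The only point worth being careful about is that the bimodule structure on $\CCinf(M)$ used in Theorem~\ref{th:hkr} indeed coincides, when $\pr = \id$, with the standard bimodule structure used to define $\HH^\bullet_\diff(\CCinf(M))$, which is immediate from the formula $(a \cdot f \cdot b)(x) = \pr^*a(x) \, f(x) \, \pr^* b(x) = a(x) f(x) b(x)$.
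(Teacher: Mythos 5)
Your proof is correct and uses exactly the same approach as the paper: specialize Theorem~\ref{th:hkr} to $N=M$ and $\pr=\id$, then simplify. The paper gives this as a one-line proof; you have merely spelled out the routine verifications.
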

\begin{proof}
	Use \cref{th:hkr} with $N = M$ and $\pr = \id$.
\end{proof}

We want to explicitly compute $\HH^\bullet_\diff(\CCinf(M),\CCinf(M))$ in the low degrees:

For $f \in \HC^0_\diff(\CCinf(M),\CCinf(N)) \cong \CCinf(N)$ we have 
\begin{equation}
	(\delta f)(a) = \pr^*a f -f \pr^*a  =0
\end{equation}
for all $a \in \CCinf(M)$ and $f \in \CCinf(P)$. So every element of $\HC^0_\diff(\CCinf(M),\CCinf(N))$ is closed but since 
there are no elements of degree $-1$, we have $\HH^0_\diff(\CCinf(M),\CCinf(N)) = \CCinf(N)$.

For $\phi \in \HC^1_\diff(\CCinf(M),\CCinf(N))$, we have 
\begin{equation}
	(\delta \phi)(a,b) = \pr^*a \phi(b) - \phi(ab) + \phi(a) \pr^* b.
\end{equation}
So $\phi$ is closed if 
\begin{equation}\label{hc1}
	\phi(ab) = \pr^*a \phi(b) + \phi(a) \pr^* b,
\end{equation}
which means that $\phi$ is a derivation. Since $\delta^0 =0$ there are no exact elements, and the cohomology  is given by the elements satisfying \eqref{hc1}.

Before proving the main theorem we need a small lemma:

\begin{lemma}\label{th:factor}
	Let $V$ be  a finite dimensional vector space  and $W \subset V$ be a vector subspace then 
	\begin{equation}
		\factor{\Lambda^\bullet(V)}{\sprod{\Lambda^1(W)}} = \Lambda\left(\factor{V}{W}\right).
	\end{equation}
	Here $\sprod{x}$ denotes the ideal generated by $x$.
\end{lemma}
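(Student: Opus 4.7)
The plan is to prove this by choosing a complement of $W$ in $V$ and analyzing the ideal concretely, though the cleanest conceptual route is via the universal property of the exterior algebra.

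First, I would note the natural candidate for the isomorphism: the quotient map $\pi: V \twoheadrightarrow V/W$ extends uniquely to a graded algebra homomorphism $\Lambda(\pi): \Lambda(V) \to \Lambda(V/W)$. Since $\Lambda(\pi)$ annihilates every $w \in W = \Lambda^1(W)$, and since the kernel of an algebra homomorphism is an ideal, the two-sided ideal $\sprod{\Lambda^1(W)}$ lies in $\ker \Lambda(\pi)$. This induces a well-defined map
\begin{equation*}
\overline{\Lambda(\pi)} : \factor{\Lambda^\bullet(V)}{\sprod{\Lambda^1(W)}} \longrightarrow \Lambda^\bullet(V/W),
\end{equation*}
which is clearly surjective since $\pi$ is. It remains to show injectivity (or equivalently, that the reverse map exists).

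To show injectivity I would choose a linear complement $U$ of $W$ in $V$, so that $V = W \oplus U$ and the composition $U \hookrightarrow V \twoheadrightarrow V/W$ is an isomorphism. The exterior algebra then decomposes as $\Lambda^\bullet(V) \cong \Lambda^\bullet(W) \otimes \Lambda^\bullet(U)$ (as graded algebras, using the signed tensor product). Under this identification, every element can be written uniquely as a sum of terms of the form $w_{i_1} \wedge \cdots \wedge w_{i_p} \wedge u_{j_1} \wedge \cdots \wedge u_{j_q}$. The key step is to verify that the ideal $\sprod{\Lambda^1(W)}$ corresponds exactly to the subspace $\Lambda^{\geq 1}(W) \otimes \Lambda^\bullet(U)$, i.e.\ those terms with at least one $W$-factor. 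The inclusion $\supseteq$ follows because each such generator is of the form $w \wedge \alpha$ with $w \in W$; the inclusion $\subseteq$ follows because anticommuting $W$-factors past $U$-factors only introduces signs and does not leave this subspace. Quotienting therefore kills everything but $1 \otimes \Lambda^\bullet(U) \cong \Lambda^\bullet(U) \cong \Lambda^\bullet(V/W)$, and one checks that the composition with $\overline{\Lambda(\pi)}$ is the identity.

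The main obstacle, which is mild, is this bookkeeping step: confirming that the two-sided ideal generated by $\Lambda^1(W)$ coincides with the left ideal $\Lambda^1(W) \wedge \Lambda^\bullet(V)$ and matches the described subspace after choosing the complement. Because $\Lambda^\bullet(V)$ is graded-commutative, left and right multiplications by an element of $W$ differ only by a sign, so the two-sided ideal is in fact the left (equivalently right) ideal, which makes the identification with $\Lambda^{\geq 1}(W) \otimes \Lambda^\bullet(U)$ immediate.
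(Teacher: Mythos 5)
Your proof is correct and takes essentially the same approach as the paper: both define the map induced by the projection $V \twoheadrightarrow V/W$, observe that it kills the ideal and is surjective, and establish injectivity by decomposing $\Lambda^\bullet(V)$ along a complement of $W$. The paper phrases the last step via an adapted basis $\{e_i\}$ with a subset spanning $W$, whereas you use the graded-algebra decomposition $\Lambda^\bullet(V) \cong \Lambda^\bullet(W) \otimes \Lambda^\bullet(U)$; these are the same argument in different notation.
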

\begin{proof}
	We define a homomorphism $\phi: \factor{\Lambda^\bullet(V)}{\sprod{\Lambda^1(W)}} \rightarrow \Lambda^\bullet(\factor{V}{W})$ by $[v_1 \wedge \cdots \wedge v_k] \mapsto [v_1] \wedge \cdots \wedge [v_k]$, where $[\cdot]$ denotes the corresponding equivalence classes.
	First of all it is easy to see that this is well defined, since any $X \in \factor{\Lambda^\bullet(V)}{\sprod{\Lambda^1(W)}}$ contains a $w \in W$ and $w =0$ in    $\factor{V}{W}$. $\phi$ is clearly surjective.  Using a basis $\{e_i\}_{i \in I}$ such that $\{e_i\}_{i\in J}$, with $J \subset I$, is a basis of $W$, one gets that $\phi(e_{i_1} \wedge \cdots \wedge e_{i_k}) =0$ if and only if one of the $i_j$ is in $J$. This shows $\phi$ to be injective. So $\phi$ is a isomorphism.
\end{proof}

Now we can prove the main theorem of this paper, namely the computation of the Hochschild cohomology $\HH^\bullet(\CCinf(M),\DO(N))$. It is a significant generalization of the theorem given in \citep{art}, where the situation of a fibered manifold is considered. The big difference to our situation is that there the cohomology is trivial except in degree zero, while here it is in general always non trivial. 
We need the assumption that $\pr(N)$ is a closed submanifold. This is needed to use the local situation given in \cref{th:hkrld}.  The fact that $\pr(N)$ is closed is important, because otherwise $\CCinf(\pr(N))$ would not be a subalgebra of $\CCinf(M)$, which is important for our construction. 

After proving this theorem, we want to give some details on the isomorphism given in it.

\begin{theorem} \label{th:hkrd}
	Let $N \xrightarrow{\pr} M$ be such that $ \pr(N)$ is a closed submanifold of $M$ then
	\begin{equation}
		\HH^\bullet_\diff(\CCinf(M),\DO(N)) \cong \factor{\X^\bullet(M)|_{\pr(N)}}{\sprod{\X(\pr(N))}} \otimes_{\CCinf(M)} \DOver(N)
	\end{equation}
	as $\CCinf(M)$-bimodule, where $\sprod{x}$ denotes the ideal generated by $x$.
\end{theorem}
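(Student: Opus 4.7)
The plan is to mirror the proof of \cref{th:hkr}: establish the statement locally in adapted charts, where \cref{th:hkrld} already delivers the answer, and then globalize via a partition of unity. The only substantive change compared to the symmetric-bimodule case of \cref{th:hkr} is the bookkeeping required to account for the quotient by $\sprod{\X(\pr(N))}$ on the right-hand side.

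First I would verify that \cref{th:hkrld} matches the claimed formula in the special case $M=\R^m$, $N=\R^n$, with $\pr$ the projection onto the first $k$ coordinates (so $\pr(N)=\R^k$). Under the trivialization $\X^\bullet(\R^m)|_{\R^k}\cong\CCinf(\R^k)\otimes\Lambda^\bullet(\R^m)$, the vector fields tangent to $\pr(N)$ correspond to $\CCinf(\R^k)\otimes\R^k$, and \cref{th:factor} yields
\[
\factor{\X^\bullet(\R^m)|_{\R^k}}{\sprod{\X(\R^k)}}\otimes_{\CCinf(\R^m)}\DOver(\R^n)\cong\Lambda^\bullet(\R^{m-k})\otimes\DOver(\R^n),
\]
the $\CCinf(\R^k)$ factor being absorbed into $\DOver(\R^n)$ via $\pr^*$. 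The explicit isomorphism on cohomology is then implemented by antisymmetrization (cf.\ \cref{th:antisym}).

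For the globalization, closedness of $\pr(N)$ together with the constant-rank theorem furnishes an atlas of adapted submanifold charts $\{U_\alpha\}$ for $\pr(N)\subset M$, which may be completed to a cover of $M$ by adjoining $M\setminus\pr(N)$, a compatible atlas $\{V_\alpha\}$ on $N$ with $\pr(V_\alpha)\subset U_\alpha$, and subordinate partitions of unity $\chi_\alpha,\psi_\alpha$. For a cocycle $\phi\in\HC^\ell_\diff(M,\DO(N))$, \cref{th:emcm} yields closed restrictions in each chart, the local step produces $\phi|_{V_\alpha}=\sigma_\alpha+\delta\theta_\alpha$, and the globally assembled $\sigma=\sum_\alpha\tilde\sigma_\alpha$ and $\theta=\sum_\alpha\tilde\theta_\alpha$ satisfy $\phi=\sigma+\delta\theta$; injectivity on cohomology is shown analogously.

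The main obstacle, and precisely what forces the quotient by $\sprod{\X(\pr(N))}$, is the non-symmetry of $\DO(N)$: the Koszul differential $\delta_K(e^I\otimes f y^J)=-\sum_{i\le k}e^i\wedge e^I\otimes f\,\partial_{y^i}y^J$ is no longer trivial, and at the Hochschild level a tangential vector field $Y\in\X(\pr(N))$ paired with any $D\in\DOver(N)$ turns out to be a coboundary of $D\tilde Y$ for some (necessarily non-vertical) lift $\tilde Y$ of $Y$ to $N$. Here the closedness of $\pr(N)$ is essential so that $\CCinf(\pr(N))$ is a genuine quotient of $\CCinf(M)$ and such lifts can be chosen globally. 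Throughout the partition-of-unity glueing, \cref{th:kdiff} must be invoked to control the differential orders of the homotopies $G^*$, $\Theta^*$ and $s^*$, since $\DO(N)$ is a differential bimodule of positive order; this is the routine but most delicate piece of the verification.
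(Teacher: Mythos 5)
Your proposal is correct and follows essentially the same route as the paper: compare the claimed formula with the local answer from \cref{th:hkrld} using \cref{th:factor}, then globalize exactly as in the proof of \cref{th:hkr} via adapted submanifold charts and a locally finite partition of unity. The paper's proof is terse at the globalization step (it simply refers to the previous theorem), whereas you spell it out; you also supply a coboundary-level interpretation of the quotient $\sprod{\X(\pr(N))}$, which is consistent with the paper's observation that the Koszul differential reduces to the polynomial de Rham differential on the first $k$ coordinates, so these are two phrasings of the same computation.
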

\begin{proof} 
	Again we first compare the statement of this theorem with the local situation in \cref{th:hkrld}, i.e. $M= \R^m$ and $N = \R^n$. 
	In this case we have $\pr(N) = \R^k$ so $\X^\bullet(M)|_{\pr(N)} = \CCinf(\R^k) \otimes \Lambda^\bullet (\R^m)$ and $\X(\pr(N)) = \Lambda^\bullet(\R^n) \otimes \CCinf(\R^n)$. These equalities follow easily form the fact that the multivector bundle over $\R^n$ is trivial. 
	Next with \cref{th:factor} we have that $\factor{\Lambda^\bullet(\R^m)}{\sprod{\Lambda^\bullet(\R^k)}} = \Lambda^\bullet(\R^{m-k})$.
	So we get 
	\begin{align*}
		\factor{\X^\bullet(M)|_{\pr(N)}}{\sprod{\X(\pr(N))}} & =                                                                       
		\factor{\CCinf(\R^k) \otimes  \Lambda^\bullet(\R^m)}{\sprod {\CCinf(\R^k) \otimes  \Lambda^1(\R^k)}} \\
		                                                     & = \CCinf(\R^k) \otimes  \factor{\Lambda^\bullet(\R^m)}{\Lambda^1(\R^k)} \\
		                                                     & = \CCinf(\R^k) \otimes \Lambda^\bullet(\R^{m-k}),                       
	\end{align*}
	since we consider $	\X^\bullet(M)|_{\pr(N)}$ and $\sprod{\X(\pr(N))}$ as $\CCinf(M)$-modules. 
				
	Globalizing works as in the previous theorem.
\end{proof}

\begin{remark}
	The submodule  $\HC^\bullet(\CCinf(M),\DOver(N))$ is symmetric by the definition of a vertical operator and so similarly to above the pullback of $\Theta$ on this submodule maps into the multivector fields.
	If one chooses a connection on $N$ as a fibered manifold over $\pr(N)$ one gets 
	\begin{equation}
		\DO(N) = \DOver(N) \oplus \DOhor(N)
	\end{equation}
	So one also has $$\HC^\bullet(\CCinf(M),\DO(N)) =  \HC^\bullet(\CCinf(M),\DOver(N)) \oplus \HC^\bullet(\CCinf(M),\DOhor(N)).$$
	Now using the Koszul complex one  can see that any closed element of $\HC^\bullet(\CCinf(M),\allowbreak \DOhor(N))$ is exact, since with the notation as in the proof of \cref{th:hkrld} we have that $y^{I'}$ is non constant. 
	So in every cohomology class their is a representative which lies in $\HC^\bullet(\CCinf(M),\DOver(N))$  and on this the isomorphism in \cref{th:hkrd} is given by the antisymmetrization, which can be shown as \cref{th:antisym}.
\end{remark}

\begin{remark}[Connection to bimodule deformation]
	This cohomology group in degree two gives the obstruction for the existence of a $\CCinf(M)$-module deformation of $\CCinf(N)$, see \cref{th:mho}. What we see is that this deformation only always exists if $\dim M = \dim \pr(N)$ or $\dim M = \dim \pr(N) +1 $.  In all other cases one has to expect obstructions. 
				   
	The existence of a bimodule deformation of a fibered manifold $P \xrightarrow{p} M$ as in \cref{ch:bim} would be granted if  the cohomology of $\DO(P)$ as $\CCinf(M) \otimes \CCinf(M) \cong \CCinf(M \times M)$-bimodule would be trivial, where $\pr $ is the projection $p:P \rightarrow M$ composed with the diagonal.  Note however that the previous theorem cannot be used directly, because for the bimodule deformation we need the algebraic tensor product and for the isomorphisms of this section the topological one.
\end{remark}

We want to further interpret the cohomologies, which we computed to be $\X^\bullet(M)|_{\pr(N)} \allowbreak \otimes_{\CCinf(M)} \CCinf(N)$ resp.$\factor{\X^\bullet(M)|_{\pr(N)}}{\sprod{\X(\pr(N))}} \otimes_{\CCinf(M)} \DOver(N)$, because they look not very intuitive at first glance.
So want to show that in fact this two can be interpreted as vector bundles over $N$.
  
First we consider the simpler case of $\X^\bullet(M)|_{\pr(N)} \otimes_{\CCinf(M)} \CCinf(N)$.
We have
\begin{equation}
	\X^\bullet(M) \otimes_{\CCinf(M)} \CCinf(N) \cong 
	\X^\bullet(M)|_{\pr(N)} \otimes_{\CCinf(\pr(N))} \CCinf(N),
\end{equation}
since $\CCinf(M) = \CCinf(\pr(N)) \oplus N$, where $N= \{a \in \CCinf(M) \;|\; a|_N = 0 \}$, because we assume $\pr(N)$ to be closed. But the direct sum is not canonical because one has to embed $\CCinf(\pr(N))$ in $\CCinf(M)$. One possibility is defining a prolongation $\prol: \CCinf(\pr(N)) \rightarrow \CCinf(M)$, which satisfies $(\prol a)|_N  = a$. One has $\pr^*a = 0$   for $ a\in N$. This can be done for example by choosing a tubular neighborhood.
  
In the following proposition we need the concept of the pullback of a vector bundle, see e.g. \citep[Section {III,8.9}]{michor}. For a vector bundle $E$ we denote the pullback along $f$ by $f^\sharp E$.
  
\begin{prop}\label{th:vecpb} 
	In the considered situation we have
	\begin{equation}
		\X^\bullet(M) \otimes_{\CCinf(M)} \CCinf(N) \cong \Gamma^\infty(\pr^\sharp \Lambda^\bullet(TM|_{\pr(N)})). 
	\end{equation} 
\end{prop}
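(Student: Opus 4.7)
My plan is to define the natural map
\begin{equation*}
\Phi: \X^\bullet(M) \otimes_{\CCinf(M)} \CCinf(N) \to \Gamma^\infty(\pr^\sharp \Lambda^\bullet(TM|_{\pr(N)}))
\end{equation*}
by $\Phi(X \otimes f)(n) = f(n)\, X_{\pr(n)}$, using the canonical identification of the fiber of $\pr^\sharp \Lambda^\bullet(TM|_{\pr(N)})$ at $n\in N$ with $\Lambda^\bullet T_{\pr(n)} M$. One checks that the assignment $(X,f) \mapsto \bigl(n\mapsto f(n)X_{\pr(n)}\bigr)$ is $\CCinf(M)$-balanced (here $\CCinf(N)$ is a $\CCinf(M)$-module via $\pr^*$), hence descends to a well-defined $\CCinf(N)$-linear map $\Phi$.

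To prove $\Phi$ is an isomorphism I would argue locally and then glue. Choose a locally finite atlas $\{U_\alpha\}$ of (an open neighborhood in $M$ of) $\pr(N)$ over which $TM$ is trivialized, together with an adapted atlas $\{V_\alpha\}$ of $N$ with $\pr(V_\alpha) \subset U_\alpha$, and subordinate partitions of unity $\chi_\alpha$ on $M$ and $\psi_\alpha$ on $N$. On each $U_\alpha$, any $Y \in \Gamma^\infty(\Lambda^\bullet TM|_{U_\alpha})$ is a finite sum $Y = \sum_I Y^I e_I$ with $Y^I \in \CCinf(U_\alpha)$ and $\{e_I\}$ a basis of $\Lambda^\bullet \R^m$, so $\Gamma^\infty(\Lambda^\bullet TM|_{U_\alpha}) \cong \CCinf(U_\alpha) \otimes \Lambda^\bullet\R^m$. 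Similarly $\Gamma^\infty(\pr^\sharp\Lambda^\bullet TM|_{V_\alpha}) \cong \CCinf(V_\alpha) \otimes \Lambda^\bullet\R^m$. Under these identifications the localized $\Phi$ becomes the tautological isomorphism sending $Y^I e_I \otimes f$ to $(\pr^* Y^I) f \cdot e_I$, which is manifestly bijective.

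For the global statement I would use the partition of unity exactly as in the proof of \cref{th:hkr}: given a section $s \in \Gamma^\infty(\pr^\sharp\Lambda^\bullet(TM|_{\pr(N)}))$, write $s = \sum_\alpha \psi_\alpha s$, each summand supported in $V_\alpha$; by the local isomorphism, $\psi_\alpha s$ equals $\Phi$ applied to some element of $\X^\bullet(U_\alpha)|_{\pr(V_\alpha)} \otimes \CCinf(V_\alpha)$, which extends to a global element via multiplication by $\chi_\alpha$ (using that $\pr(N)$ is closed so functions on $\pr(N) \cap U_\alpha$ extend to $M$). Summing gives a preimage of $s$, proving surjectivity. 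For injectivity, an element of the kernel can, by the same partition of unity argument, be written as a finite sum of elements each supported in some $V_\alpha$; restricting to the local chart and using the local isomorphism shows each summand is zero.

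The main obstacle will be carefully handling the globalization: strictly speaking one needs that $\Gamma^\infty(\Lambda^\bullet TM)$ is finitely generated projective over $\CCinf(M)$ so that base change along $\CCinf(M)\to\CCinf(N)$ commutes with the sections functor — equivalently, that the algebraic tensor product on the left already captures all smooth sections of the pullback bundle. This is the Serre--Swan-type statement for (not necessarily compact) smooth manifolds, and the partition of unity argument above is essentially a direct verification of it in the present geometric setting.
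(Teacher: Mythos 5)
Your proposal is correct and ultimately rests on the same geometric fact as the paper's proof — the Serre--Swan identification of sections of the pullback bundle with the base-changed module — but you verify it hands-on, whereas the paper argues more abstractly. Concretely, the paper first shows that $\X^\bullet(M)|_{\pr(N)} = P\A^n$ for a projector $P$ over $\A = \CCinf(\pr(N))$, deduces that $\X^\bullet(M) \otimes_{\CCinf(M)} \CCinf(N) \cong \pr^*(P)\CCinf(N)^n$ is finitely generated projective over $\CCinf(N)$, and only then exhibits the map $X\otimes f \mapsto f\,\pr^\sharp X$ and asserts it is an isomorphism (without spelling out why). You instead define the same map, verify it is $\CCinf(M)$-balanced, and prove bijectivity directly by trivializing $TM$ over charts and gluing with partitions of unity, which is exactly the content that the paper's phrase ``one can also show that $\phi$ is an isomorphism'' leaves implicit. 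Your route has the advantage of being self-contained and explicit about where the closedness of $\pr(N)$ is used (to extend functions from $\pr(N)\cap U_\alpha$ to $M$); the paper's route is shorter because it offloads the work onto the projector formalism. Both are sound, and your final remark correctly identifies the partition-of-unity argument as a direct proof of the base-change-commutes-with-sections statement that underlies the projector argument.
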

\begin{proof}
	Since $\mathcal{E} = \X^\bullet(M)|_{\pr(N)}$ are the section of a $\pr(N)$ vector bundle it is a projective module over $\A= \CCinf(\pr(N))$. So their exists a projector $P \in \A^{n \times n}$  such that $\mathcal{E} = P \A^n$. So we have 
	\begin{equation}
		P \A^n \otimes_\A \CCinf(N)= \pr^*(P) \CCinf(N)^k.
	\end{equation}
	This shows $\X^\bullet(M) \otimes_{\CCinf(M)} \CCinf(N)$ to be projective as a $\CCinf(\pr(N))$-module. So it is a isomorphic to the sections of a vector bundle over $N$. 
	One can define a map $\phi: \X^\bullet(M) \otimes_{\CCinf(M)} \CCinf(N) \rightarrow \Gamma^\infty(\pr^\sharp \Lambda^\bullet(TM|_{\pr(N)})) $ by 
	\begin{equation}
		X \otimes f \mapsto f \pr^\sharp X.
	\end{equation}
	This clearly linear with respect to $\CCinf(N)$, so it is a vector bundle morphism. One can also show that $\phi$ is isomorphism. 
\end{proof}

Now we come to the case of $\factor{\X^\bullet(M)|_{\pr(N)}}{\sprod{\X(\pr(N))}} \otimes_{\CCinf(M)} \DOver(N)$. 
  
We recall that $\X^\bullet(M)|_{\pr(N)}$ and $\X^\bullet(\pr(N))$ can be considered as  vector bundles over $\pr(N)$, which is by assumption a manifold  and $\X^\bullet(M)|_{\pr(N)}$ is a subbundle of $\X^\bullet(\pr(N))$, so the quotient is again a vector bundle over $\pr(N)$.
   
For a manifold $M$ and a submanifold $ N \subset M$ we  define 
\begin{equation}
	\No(M,N) =  \factor{\X^\bullet(M)|_{N}}{\sprod{\X(N)}}
\end{equation}
to be the section of the exterior algebra of the normal bundle of $N$ in $M$. 
This means 
\begin{equation}
	\No(M,N) \cong \Gamma^\infty( \Lambda^\bullet(TN^\bot))
\end{equation}
as vector bundle over $N$. Here $TN^\bot = \factor{TM|_N}{TN}$ denotes the normal bundle of $N$. This can been seen using \cref{th:factor}.  
So we have 
\begin{equation}
	\HH^\bullet(\CCinf(M),\DO(P)) \cong \No(M,\pr(N)) \otimes_{\CCinf(M)} \DOver(N). 
\end{equation}  
First we note that $\CCinf(M) = \CCinf(\pr(N)) \oplus \mathcal{B}$, and for $b \in \mathcal{B}$ we have $\pr^*b = 0$. So we have 
\begin{equation}
	\No(M,\pr(N))\otimes_{\CCinf(M)} \DOver(N) \cong 
	\No(M,\pr(N)) \otimes_{\CCinf(\pr(N))} \DOver(N)
\end{equation}
since $\mathcal{P} = \No(M,N)$ is a $\CCinf(\pr(N))$-module.  
  
Since $\mathcal{P}$ are the sections of a vector bundle over $\pr(N)$, it is a projective $\CCinf(\pr(N))$-module. This means we can write $\mathcal{P} = P\CCinf(\pr(N))^k$ for a projector $P \in \CCinf(\pr(N))^{k \times k}$. 
We then have $P\CCinf(\pr(N))^k \otimes_{\CCinf(\pr(N))} \CCinf(N) \cong \pr^* P \CCinf(N)^k$ for purely algebraic reasons. This shows $\No(N,M) \otimes_{\CCinf(M)} \DOver(N)$ to be a projective $\CCinf(N)$-module, so it is isomorphic to the section of  a vector bundle over $N$. 
  
\begin{prop}
	We have 
	\begin{equation}
		\begin{split}
			\No(M,\pr(N)) \otimes_{\CCinf(M)} \DOver(N) 
			&\cong  \Gamma^\infty \left(\pr^\sharp \Lambda^\bullet \left(\factor{TM}{T \pr(N)} \right) \otimes \Lambda^\bullet(\Sym VN) \right) \\
			& \cong 	\Gamma^\infty \left(\pr^\sharp \Lambda^\bullet T \pr N^\bot \right) \otimes \DOver(N)	.
		\end{split}
	\end{equation}
	Here $VN$ is the vertical bundle of $N$ with respect to some connection on the fibered manifold $N \rightarrow \pr^*(N)$.
\end{prop}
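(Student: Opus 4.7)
The plan is to follow the template of \cref{th:vecpb}, adapting it to account for the quotient by $\langle \X(\pr(N))\rangle$ and the fact that the second factor is $\DOver(N)$ rather than $\CCinf(N)$. The first isomorphism will come from repackaging both factors as (sections of) vector bundles over $\pr(N)$ and $N$ respectively, and the second from the observation that $\DOver(N)$ is already a $\CCinf(N)$-module, so the tensor product over $\CCinf(M)$ pushes all the $N$-dependence into a clean second factor.

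First I would reduce the tensor product over $\CCinf(M)$ to one over $\CCinf(\pr(N))$. Writing $\CCinf(M) = \CCinf(\pr(N)) \oplus \B$ with $\B = \{a\in \CCinf(M) \;|\; a|_{\pr(N)}=0\}$ (using the closedness of $\pr(N)$), the ideal $\B$ acts as zero on $\DOver(N)$ through $\pr^*$, so
\begin{equation*}
\No(M,\pr(N)) \otimes_{\CCinf(M)} \DOver(N) \cong \No(M,\pr(N)) \otimes_{\CCinf(\pr(N))} \DOver(N).
\end{equation*}
Next, \cref{th:factor} applied fiberwise identifies $\No(M,\pr(N))$ with $\Gamma^\infty\bigl(\Lambda^\bullet(TM/T\pr(N))|_{\pr(N)}\bigr)$, i.e.\ with the sections of the exterior algebra of the normal bundle $T\pr(N)^\bot$. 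This is a projective $\CCinf(\pr(N))$-module, so I can write it as $P\,\CCinf(\pr(N))^k$ for some idempotent matrix $P$, and then exactly as in the proof of \cref{th:vecpb} the tensor product becomes $\pr^*P\cdot \DOver(N)^k$. By the characterization of sections of a pullback bundle in terms of a projector, this is $\Gamma^\infty(\pr^\sharp \Lambda^\bullet T\pr(N)^\bot) \otimes_{\CCinf(N)} \DOver(N)$, which gives the second displayed isomorphism immediately.

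For the first displayed isomorphism, I would identify $\DOver(N)$ with the sections of $\Sym^\bullet VN$ (the symbol bundle of vertical differential operators, once an auxiliary connection on $N\to \pr(N)$ has been chosen to split $TN = VN \oplus HN$ and identify $\DOver(N)$ with its principal symbols degree by degree). Then combining the two vector-bundle pieces in a single tensor product gives
\begin{equation*}
\Gamma^\infty\!\left(\pr^\sharp \Lambda^\bullet(TM/T\pr(N))\right)\otimes_{\CCinf(N)} \Gamma^\infty(\Sym^\bullet VN) \cong \Gamma^\infty\!\left(\pr^\sharp\Lambda^\bullet(TM/T\pr(N)) \otimes \Sym^\bullet VN\right),
\end{equation*}
which is the first factor of the proposition (modulo notational conventions for the symbol bundle).

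The main obstacle will be step two — making the projective-module argument for $\No(M,\pr(N))$ global. One has to check that $P\,\CCinf(\pr(N))^k \otimes_{\CCinf(\pr(N))} \DOver(N) \cong \pr^*P\cdot\DOver(N)^k$ really recovers sections of the pullback bundle and not just a formal tensor product; this is standard once one has a finite partition of unity on $\pr(N)$ trivializing the bundle, combined with the non-canonical splitting $\CCinf(M) = \CCinf(\pr(N))\oplus \B$ via a tubular neighborhood of $\pr(N)$ (as in the discussion preceding \cref{th:vecpb}). The identification of $\DOver(N)$ with a vector-bundle section space also depends on the choice of connection, but this enters only in the final display, where such a choice is already being made explicit.
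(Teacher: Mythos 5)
Your proposal is correct and follows essentially the same route as the paper: reduce the tensor product from $\CCinf(M)$ to $\CCinf(\pr(N))$ via the closedness of $\pr(N)$, identify $\No(M,\pr(N))$ with $\Gamma^\infty(\Lambda^\bullet(TM/T\pr(N)))$, invoke the projective-module argument of \cref{th:vecpb} for the pullback bundle, identify $\DOver(N)$ with $\Gamma^\infty(\Sym VN)$ via a connection, and combine using $\Gamma^\infty(E)\otimes_{\CCinf(N)}\Gamma^\infty(F)\cong\Gamma^\infty(E\otimes F)$. The one place you are a bit looser than the paper is the $\DOver(N)\cong\Gamma^\infty(\Sym VN)$ step: saying \enquote{identify $\DOver(N)$ with its principal symbols degree by degree} does not by itself give a linear isomorphism, since only the top-order symbol is canonical; the paper instead picks a torsion-free linear connection on $TN$ (to symmetrize covariant derivatives, as in the usual $\Gamma^\infty(\Sym TN)\cong\DO(N)$) and additionally requires $\nabla_X\pr^*a$ to be a pullback for $a\in\CCinf(\pr(N))$, $X\in\X(N)$, which is exactly what guarantees the isomorphism restricts to $\Sym VN\cong\DOver(N)$. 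That compatibility condition is worth stating explicitly rather than folding it into a generic \enquote{choice of connection.}
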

\begin{proof}
	First we note that $\No(M,\pr(N)) \cong \Gamma^\infty( \Lambda^\bullet \left(\factor{TM}{T \pr(N)}\right))$.
				   
	Then, when choosing a torsion free connection on $N$, we get that $\Gamma^\infty(\Sym TN) \cong \DO(N)$. For the vertical operators we get with this $\Gamma^\infty(\Sym (VN)) \cong \DOver(N)$, assuming $\nabla_X \pr^* a$ is the pullback of some function on $\pr(N)$ for any $a\in \CCinf(\pr(N))$ and $X \in \X(N)$.  So we can consider the differential operators as a vector bundle over $N$. 
	In general we have that for two vector bundle  $E,F$  over $N$ we have $\Gamma^\infty(E) \otimes_{\CCinf(N)} \Gamma^\infty(F) \cong \Gamma^\infty(E \otimes F)$. Using this and \cref{th:vecpb} we get the desired result.
\end{proof}

The above proposition shows that one can consider $\HH^\bullet_\diff(\CCinf(M),\DO(N))$ as some sort of multivector fields on $N$, which take as arguments functions on $M$ and have values in the vertical differential operators on $N$.

Finally we want to embed the cohomology as reformulated above  back in to the complex. For this it is necessary to embed the normal bundle of $\pr N$ into the tangent bundle $TM|_{\pr N}$. This can be done for example by choosing a tubular neighborhood.  With this an element $X \otimes D \in 	\Gamma^\infty \left(\pr^\sharp \Lambda^k T \pr N^\bot \right) \otimes \DOver(N)$ can be considered as an element of $\HC_\diff(M,\DO(n))$ by
\begin{equation}
	(X_1 \wedge \dots \wedge X_k \otimes D)(a_1, \dots , a_k)(f) = \sum_{\sigma \in S_k} \sign(\sigma) \sprod{\pr^\sharp\d a_1,X_{\sigma(1)}} \dots  \sprod{\pr^\sharp\d a_k,X_{\sigma(k)}}   D(f).
\end{equation} 
Here $\sprod{\cdot,\cdot}$ denotes the natural pairing between $\pr^\sharp TM|_N$ and $\pr^\sharp T^*M|_N$.

\bibliographystyle{bibstyle}
\bibliography{bimoddef}


\end{document}